\documentclass[11pt,reqno]{amsart}
\usepackage[T1]{fontenc}
\usepackage[utf8]{inputenc}
\usepackage{lmodern,microtype}
\usepackage[a4paper,textwidth=158mm,textheight=235mm,centering,headheight=12pt]{geometry}
\usepackage{amsmath,amssymb,amsthm,mathtools}
\usepackage{booktabs,longtable,array,tabularx,enumitem}
\usepackage{tikz,placeins}
\usetikzlibrary{calc,positioning}
\usepackage{hyperref}
\hypersetup{hidelinks,pdftitle={Orbit stratifications and smooth topology of positively curved six-manifolds with rank-one nonabelian symmetry},pdfauthor={},pdfsubject={Transformation groups, positive sectional curvature, smooth six-manifolds}}
\numberwithin{equation}{section}
\newtheorem{theorem}{Theorem}[section]
\newtheorem{proposition}[theorem]{Proposition}
\newtheorem{lemma}[theorem]{Lemma}
\newtheorem{corollary}[theorem]{Corollary}
\theoremstyle{definition}

\theoremstyle{remark}

\newcommand{\SO}{\mathrm{SO}}
\newcommand{\SU}{\mathrm{SU}}
\newcommand{\OO}{\mathrm O}

\newcommand{\Sp}{\mathrm{Sp}}
\newcommand{\CP}{\mathbb{CP}}
\newcommand{\RP}{\mathbb{RP}}

\newcommand{\Z}{\mathbb Z}
\newcommand{\Q}{\mathbb Q}
\newcommand{\R}{\mathbb R}
\newcommand{\C}{\mathbb C}
\newcommand{\HH}{\mathbb H}
\newcommand{\Wflag}{F_{\mathrm W}}
\newcommand{\Eflag}{F_{\mathrm E}}
\newcommand{\id}{\mathrm{id}}
\newcommand{\eps}{\varepsilon}
\newcommand{\Ad}{\operatorname{Ad}}
\newcommand{\diag}{\operatorname{diag}}
\newcommand{\PD}{\operatorname{PD}}

\newcommand{\Hess}{\operatorname{Hess}}

\newcommand{\Sec}{\operatorname{sec}}

\newcommand{\ang}[1]{\langle #1\rangle}

\newcommand{\one}{\mathbf 1}

\newcommand{\arxiv}[1]{\href{https://arxiv.org/abs/#1}{arXiv:#1}}
\newcommand{\doi}[1]{\href{https://doi.org/#1}{doi:\nolinkurl{#1}}}
\setlist[enumerate]{label=\textup{(\roman*)},leftmargin=2em,itemsep=3pt,topsep=4pt}
\tikzset{sv/.style={circle,fill=black,draw=black,inner sep=2.0pt},ev/.style={circle,fill=white,draw=black,line width=.7pt,inner sep=2.3pt},edge/.style={draw=black,line width=.65pt},lab/.style={fill=white,inner sep=1pt,font=\small}}

\title[Six-manifolds with rank-one nonabelian symmetry]{Orbit stratifications and smooth topology of positively curved six-manifolds with rank-one nonabelian symmetry}
\author{Minbo Gao$^1$}
\author{Yuhang Liu$^{*2}$}
\email{yuhang.liu02@xjtlu.edu.cn}
\date{September 29, 2026}
\subjclass[2020]{53C20, 57S15, 57R19, 55P62}
\keywords{Positive sectional curvature, six-manifold, nonabelian symmetry, orbit stratification, equivariant reconstruction, characteristic classes}

\begin{document}
\footnotetext[1]{Institute of Software, Chinese Academy of Sciences}
\footnotetext[2]{Department of Applied Mathematics, Xi'an Jiaotong-Liverpool University}
\begin{abstract}
We study closed simply connected six-manifolds with positive sectional curvature and an effective isometric action of $\SO(3)$ or $\SU(2)$. Starting from Liu's corrected orbit-space analysis, we refine the stratification and recover smooth topology in several previously unresolved cases. For a boundaryless $\SO(3)$ quotient, singular isotropy is connected, the weighted nonprincipal graph belongs to nine necessary families, and every action with only cyclic exceptional isotropy is classified equivariantly. The remaining branching families admit explicit rational intersection and Pontryagin formulas. For a three-ball $\SO(3)$ quotient, the uniform-boundary configurations give sphere bundles over $S^4$ and a completely distinguished odd-integer family of non-spin six-manifolds. A doubled-quotient argument excludes the configuration with two boundary fixed points and an interior singular orbit, while the configuration with one fixed point and an exceptional interval has underlying manifold $\CP^3$. For a boundaryless $\SU(2)$ quotient, an adjoint-weight matching argument reduces the exceptional three-orbit case to a unique action on the Eschenburg flag and identifies every two-orbit candidate with a weighted action on $S^2\times S^4$. These classifications imply rational ellipticity throughout the stated symmetry class. We also establish restricted curvature obstructions for the two diagonal product actions, without asserting nonexistence for arbitrary invariant metrics. Abstract graph restrictions, smooth realizability, and positive-curvature realizability are kept distinct. Some of the results in this paper are discovered by ChatGPT-6-astra and verified by the authors.
\end{abstract}
\maketitle
\setcounter{tocdepth}{1}
\tableofcontents

% ===== 01_introduction.tex =====
\section{Introduction}\label{sec:intro}

The relation between positive sectional curvature and topology remains one of the central questions in global Riemannian geometry. Compact rank-one symmetric spaces provide the basic examples. The homogeneous constructions of Wallach and the biquotient constructions of Eschenburg showed that this class is not exhausted by symmetric spaces \cite{Wallach,Eschenburg}. In dimension six the homogeneous flag manifold
\[
 \Wflag=\SU(3)/T^2
\]
and its inhomogeneous Eschenburg counterpart $\Eflag$ are particularly important: they have the same Betti numbers, but their integral intersection data and Pontryagin classes distinguish them. Thus even a sharp Euler-characteristic theorem does not settle the smooth classification problem. Surveys of the examples and construction techniques can be found in \cite{ZillerSurvey,WilkingSurvey}.

The symmetry program asks how much topology is forced when a positively curved metric has a prescribed isometry group. Grove's formulation \cite{GroveProgram} encompasses both large torus rank and small cohomogeneity. The maximal-rank theorem of Grove--Searle \cite{GroveSearleRank}, the almost maximal-rank result of Fang--Rong \cite{FangRong}, Wilking's connectedness and symmetry theorems \cite{WilkingConnected,WilkingSymmetry}, the four-dimensional results of Hsiang--Kleiner and Grove--Wilking \cite{HK,GroveWilking}, the five-dimensional result of Rong \cite{rong} and the recent ten-dimensional result of Krishnan--Wiemeler \cite{Anusha} demonstrate the strength of this approach. Nonabelian symmetry supplies different information from torus rank. A compact connected nonabelian Lie group contains a subgroup locally isomorphic to $\SU(2)$; its effective image is $\SU(2)$ or $\SO(3)$. These groups have rank one, so their presence alone is not a large-torus-rank hypothesis. In dimension six, however, a generically free action of either group has cohomogeneity three. The geometry of a three-dimensional quotient and the topology of six-manifolds can therefore be used together.

Related five-dimensional work includes Simas's study of nonnegatively curved five-manifolds with nonabelian symmetry \cite{Simas}. Liu subsequently established an Euler-characteristic theorem, determined the nontrivial-principal-isotropy actions, classified several fixed-point and exceptional-free cases, and analyzed the remaining orbit strata in dimension six \cite{LiuPublished,LiuCorrected}. His corrected manuscript is our starting point. In particular, the polyhedral exceptional isotropy exhibited there must be retained: the irreducible action on the round harmonic-cubic six-sphere has tetrahedral isotropy. An argument that excludes every noncyclic exceptional group would therefore eliminate a genuine positively curved example.

Several recent developments clarify the setting of the present work. Kennard--Wiemeler--Wilking's splitting methods and their connection between torus representations and regular matroids provide new curvature obstructions under higher-rank torus symmetry \cite{KWWsplit,KWWmatroids}. These results concern a different symmetry regime from the rank-one actions considered here. More directly, Khalili Samani--Radeschi proved rational ellipticity for positively curved cohomogeneity-three manifolds with boundaryless orbit space and established a global catalogue of quotient graphs, together with branched-cover and unknotting tools \cite{KSR}. Their results turn local slice restrictions into a finite list of abstract graph shapes. The double disk-bundle theorem of DeVito--Galaz-Garc\'ia--Kerin supplies a complementary route to rational ellipticity for the ball-quotient configurations \cite{DGGK}. We use these theorems as inputs.

Recent construction preprints also counsel care in formulating product obstructions. In August 2026 Brendle--Hung reported a positive-sectional-curvature construction on $S^2\times S^2$ using a higher-order perturbation of a Cheeger deformation \cite{BH}, and Guo--Fang--Lu reported an analogous construction on $S^2\times S^3$ \cite{GFL1}. In September 2026   Guo--Fang--Lu reported a construction on $S^3\times S^3$ \cite{GFL2}, and He--Liu--Yau reported constructions on all smooth homotopy seven-spheres \cite{HLY}. These are cited here as recent preprints, not as ingredients in any proof below. They continue a construction direction exemplified earlier by Dearricott and Grove--Verdiani--Ziller \cite{Dearricott,GVZ}. Neither report decides positive curvature on $S^2\times S^4$ with either of the specific rank-one actions studied here. In particular, the Euler-characteristic form of the Hopf conjecture, the historical product question, and a prescribed-action metric-existence problem are three different assertions. The last must be addressed by an obstruction to all invariant metrics, not merely to product or connection metrics.

\subsection{Standing hypotheses and notation}
Unless a statement explicitly concerns smooth realizability without curvature, we assume
\begin{equation}\label{eq:H}
 \begin{gathered}
 M^6\text{ is a closed, smooth, simply connected manifold},\\
 g\text{ is a Riemannian metric with }\Sec_g>0,\\
 G=\SO(3)\text{ or }\SU(2)\text{ acts effectively and isometrically on }(M,g).
 \end{gathered}
\end{equation}
Write $X=M/G$ and $\pi:M\to X$. A singular orbit has dimension smaller than a principal orbit; an exceptional orbit has principal dimension but larger isotropy. Thus a finite-isotropy branching vertex is exceptional, not singular in this terminology. Let $\Sigma\subset X$ be the image of all nonprincipal orbits. An edge labelled $m$ means cyclic isotropy $C_m$ of order $m\ge2$. A label $1$ is omitted, since it describes principal points. We denote by $D_m\subset\SO(3)$ the rotational dihedral group of order $2m$.

For $G=\SO(3)$ put $T=\SO(2)$; for $G=\SU(2)$ put
\[
 T=\{\diag(z,z^{-1}):z\in S^1\}.
\]
The real oriented weight-$j$ plane is denoted by $\C_j$. Unless signs are needed for a normal Euler class, slice weights are positive unordered pairs. Cohomology has integral coefficients unless otherwise indicated. A statement about a diffeomorphism is an ordinary smooth classification unless equivariance is explicitly specified. All references to Liu's theorem, remark, or figure numbers mean the corrected version \cite{LiuCorrected}, not the earlier published text \cite{LiuPublished}.

\subsection{The classification results}
The following statements summarize the main conclusions. Their proofs occupy Sections~\ref{sec:so3s3}--\ref{sec:su2s3}; the models and all parameters are defined there.

\begin{theorem}[$\SO(3)$ and a spherical quotient]\label{thm:mainSO3}
Assume \eqref{eq:H}, $G=\SO(3)$, and $X\cong S^3$. Every singular isotropy group is $T$, and the number $r$ of singular orbits satisfies $1\le r\le3$ and $\chi(M)=2r$. The abstract weighted graph $\Sigma$ is one of the nine families in Tables~\ref{tab:graphs} and \ref{tab:lollislices}.

If every exceptional isotropy group is cyclic, then $r=2$ or $3$. For $r=2$ the action is equivariantly diffeomorphic to
\[
 N^G_{p,q}=G\times_T S(\R\oplus\C_p\oplus\C_q),
 \qquad p,q\ge1,\quad\gcd(p,q)=1,
\]
and $M\cong S^2\times S^4$. For $r=3$ it is equivariantly diffeomorphic to
\[
 P_{a,b}=G\times_T\CP^2
 =\mathbb P(\one\oplus L^{2a}\oplus L^{2(a+b)})\longrightarrow S^2,
 \qquad a,b\ge1,\quad\gcd(a,b)=1,
\]
where $c_1(L)$ generates $H^2(S^2)$. The underlying manifold is the product $S^2\times\CP^2$ precisely when $a\equiv b\pmod3$; otherwise it is the nontrivial unoriented diffeomorphism type of a linear $\CP^2$-bundle over $S^2$.

The remaining graphs are a dihedral $Y$, five types of lollipop, and one $D_3$--$A_4$ theta graph. Their rational intersection forms and Pontryagin covectors are given in Section~\ref{sec:branchinvariants}. The nine-family list is necessary; it is not a positive-curvature realization theorem or a classification of all spatial embeddings.
\end{theorem}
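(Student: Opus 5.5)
The proof runs in four stages: local slice analysis, global graph constraints from the quotient $X\cong S^3$, equivariant reconstruction in the cyclic case, and identification of the smooth types.

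\emph{Stage 1: singular isotropy and the count $r$.} Since $X\cong S^3$ has no boundary, no point of $M$ has isotropy $\OO(2)$ or $\SO(3)$. Indeed, an $\SO(3)$-fixed point, or a codimension-one face coming from $\OO(2)$ or a reflection, would give boundary points of $X$. Hence every positive-dimensional isotropy group is conjugate to $T=\SO(2)$, which proves that singular isotropy is connected. The slice at a $T$-orbit $G/T\cong S^2$ is $\C_p\oplus\C_q$, with $T$ acting by weights $(p,q)$. The quotient of the unit sphere $S^3$ by this action is an $S^2$ carrying at most two cone points of orders $p$ and $q$. So each singular orbit is a vertex of $\Sigma$ of valence at most $2$ in the cyclic-isotropy graph, with edge labels $p$ and $q$ (a label $1$ is omitted).

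Liu's Euler-characteristic argument applies to $T$: we have $\chi(M)=\chi(M^T)$, and $M^T$ is a union of components fibering over the singular orbits. The normalizer $N(T)$ acts on $M^T$ with quotient the set of singular orbits, so each singular orbit contributes $\chi(S^2)=2$. This gives $\chi(M)=2r$. The bound $1\le r\le3$ then follows from the $T$-fixed point set. It is nonempty, since $T$ acts isometrically on an even-dimensional positively curved manifold (Berger). Its components are totally geodesic $2$-spheres, and Grove--Searle/Wilking bounds on fixed components of a circle action in dimension six give at most three of them. Equivalently one can cite Liu's Euler-characteristic theorem $\chi\le6$.

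\emph{Stage 2: the nine graph families.} Combine the local slice data from Stage 1 with the exceptional-isotropy local models: the finite subgroups $C_m$, $D_m$, $A_4$, $S_4$, $A_5$ act on $\R^3$ with quotient a branched $3$-ball. This shows that $\Sigma$ is a trivalent-or-less graph whose vertices are singular orbits (valence $\le2$) or noncyclic exceptional points (valence $3$, with labels $(2,2,m)$, $(2,3,3)$, $(2,3,4)$, $(2,3,5)$). Then feed this into the Khalili Samani--Radeschi catalogue \cite{KSR} of quotient graphs for boundaryless cohomogeneity-three positive curvature, and prune it using the gcd and weight-compatibility conditions along edges. The outcome is the nine families listed in the tables.

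\emph{Stage 3: the cyclic case.} If all exceptional isotropy is cyclic, $\Sigma$ has no trivalent vertices, so it is a union of arcs and circles through the singular vertices. The case $r=1$ is excluded because it would force a closed loop with a single weight pair; I expect this to follow from $\chi$ or simple connectivity via \cite{KSR}. For $r=2,3$, $\Sigma$ is a chain. I then reconstruct $M$ equivariantly as a union of $G$-tubes around the singular orbits, glued along regions of cyclic-isotropy edges. Using the unknotting theorem of \cite{KSR}, $\Sigma\subset S^3$ is standard, and the gluing is determined by the weights. For $r=2$ this yields $G\times_T S(\R\oplus\C_p\oplus\C_q)$, and for $r=3$ it yields $G\times_T\CP^2$ with weights $(0,2a,2(a+b))$.

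\emph{Stage 4: smooth types.} The manifold $N_{p,q}$ is an $S^4$-bundle over $S^2$ with $w_2=0$ by parity, hence trivial, which gives $S^2\times S^4$. The manifold $P_{a,b}$ is a projectivized bundle. Twisting by $L^{k}$ shifts the exponents, and $\CP^2$-bundles over $S^2$ are classified by $\pi_1(\mathrm{PU}(3))=\Z/3$, that is, by $c_1$ of the vector bundle mod $3$. Here $c_1=2(2a+b)\equiv b-a\pmod3$, which gives the stated criterion.

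The branching families are handled in Section~\ref{sec:branchinvariants}. The main obstacle is Stage 3: the equivariant reconstruction, in particular showing that the gluing data is unique and the graph is unknotted.
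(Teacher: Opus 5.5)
There are genuine gaps, and they sit exactly where the paper needs non-formal input.

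\emph{The cyclic case (Stage 3).} Your exclusion of $r=1$ is incorrect. An isolated vertex with slice $(1,1)$ and no edges forces no loop, and it is compatible with $\chi=2$ and with simple connectivity; for $\SU(2)$ it is actually realized on $S(\C^2\oplus\mathfrak{su}(2))$. What excludes it for $\SO(3)$ is a parity argument, which also excludes the edgeless $r=3$ configuration that your ``chain'' description overlooks:
\begin{itemize}
\item deleting the tubes $U^G_{1,1}$ leaves a principal $\SO(3)$-bundle over $S^3$ minus $r$ balls;
\item each boundary restriction has $w_2\neq0$, because $G\times_TS^3\cong S^2\times S^3$ is simply connected;
\item the boundary classes sum to zero mod $2$, so $r$ is even.
\end{itemize}

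Likewise, ``the gluing is determined by the weights'' is false for unsigned data. You need the oriented endpoint characters $u,v$ of the normal line to $A_m$, with $v\equiv u\pmod m$ and the primitivity condition $\gcd(d_Gu,(v-u)/m)=1$ of Lemma~\ref{lem:normal}.
\begin{itemize}
\item In IV/V this excludes $v=-(a+b)$. That choice forces $a=2$, and then both Euler evaluations $2b$ and $-(b+1)$ are even, contradicting simple connectivity of the common boundary $S^2\times S^3$.
\item In the point-plus-interval case, this is the only thing forcing the order to be $2$. Edge congruence is vacuous there, so without it your cyclic list contains spurious orders $m>2$.
\end{itemize}
Only after the normal line is pinned down does Lemma~\ref{lem:extension} absorb the attaching map.

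\emph{Stages 1, 2 and 4.} These also omit real steps.
\begin{itemize}
\item Stage 2: ``gcd and weight compatibility'' does not cut the catalogue of \cite{KSR} down to Table~\ref{tab:graphs}. The theta family IX needs the exceptional-track obstruction \cite[Lemma 2.10]{KSR}, together with an analysis of which arms are interchanged or reflected at $D_m,A_4,S_4,A_5$. The lollipop rows need the order-two cycle edge from the global theorem plus the transverse-character comparison along cycle and stem.
\item Stage 1: an $\OO(2)$ orbit with both slice weights nonzero projects to an isolated point, not a face. This is precisely the alternative Liu left open. You need the determinant argument: $w\in\OO(2)\setminus T$ is antilinear on $\C_p\oplus\C_q$, hence has determinant $+1$ there, but reverses $T(\RP^2)$, so it would act orientation-reversingly on $T_pM$ although $G$ is connected. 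Alternatively, check explicitly that $w$ induces a reflection of $S^2(p,q)$. Separately, $M^T$ consists of $2r$ isolated points, not of $2$-spheres.
\item Stage 4: $\pi_1(PU(3))$ classifies the bundles, not their total spaces. To prove $P_{a,b}\not\cong S^2\times\CP^2$ when $a\not\equiv b\pmod3$, you need a diffeomorphism invariant. The ring $\Z[u,h]/(u^2,h^3+cuh^2)$ works: its primitive square-zero class is $\pm u$, and replacing $h$ by $\pm h+ju$ changes $c$ only by sign and multiples of $3$. Complex conjugation then identifies the two nonzero classes.
\end{itemize}
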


\begin{theorem}[$\SO(3)$ and a ball quotient]\label{thm:mainball}
Assume \eqref{eq:H}, $G=\SO(3)$, and $X\cong B^3$.
\begin{enumerate}
\item In Liu's Figure~2(b), the underlying manifold is
\[
 E_n=S(\xi_n)\longrightarrow S^4,\qquad p_1(\xi_n)=4n u,
 \qquad n\in\Z,
\]
where $\xi_n$ is an oriented three-plane bundle and $u$ generates $H^4(S^4)$. The ordinary diffeomorphism type is determined by $|n|$; $E_0=S^2\times S^4$ and $E_1=\CP^3$.
\item In Figure~2(c), the smooth action is obtained by gluing a weight-$(1,1)$ singular-orbit tube to a two-disk bundle over $S^2\times S^2$ whose Euler class is $2a+kb$. The gluing exists exactly for odd $k$ and has no further equivariant attaching parameter. The resulting manifolds $M_k$ satisfy
\[
 M_k\cong M_\ell\quad\Longleftrightarrow\quad |k|=|\ell|.
\]
They are non-spin, have torsion-free homology, and their full cubic and Pontryagin data are given in \eqref{eq:Mkcubic}--\eqref{eq:Mkp1}.
\item Figure~2(e) cannot occur. More generally, if $f$ is the number of isolated boundary fixed points with tangent representation $\R^3\oplus\R^3$, and $r$ is the number of interior singular points, then $f+2r\le3$.
\item In Figure~2(f), $M$ is diffeomorphic to $\CP^3$. The known projective $\SO(3)$ action realizes this configuration with positive curvature.
\end{enumerate}
Figures~2(a) and~2(d) are the cases treated by Liu; in~(d) the underlying manifold is diffeomorphic, not merely homeomorphic, to the standard $S^6$. The smooth families in (i)--(ii) do not assert positive-curvature realizability for every parameter.
\end{theorem}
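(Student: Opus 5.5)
The plan is to treat each of the four configurations separately. In each case we reconstruct $M$ from Liu's boundary and interior stratification of $X\cong B^3$ as a double disk bundle, or as a union of tubes, and then read off the topology. Throughout, the double disk-bundle theorem of DeVito--Galaz-Garc\'ia--Kerin \cite{DGGK} and the slice theorem give the local models.

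\textbf{Case (i), Figure~2(b).} The boundary of $X$ carries uniform isotropy. Its preimage is a singular stratum, and the interior is principal. Cutting $X$ along a sphere parallel to $\partial X$, we expect $M$ to be a union of a tubular neighbourhood of $\pi^{-1}(\partial X)$ and a disk bundle over a four-dimensional fixed or singular piece. I would show that one side is $D^3\times S^4$-like, i.e.\ that $M$ fibres over $S^4$ with fibre $S^2$ arising from the orbit-space radial direction. This identifies $M$ with the sphere bundle $S(\xi)$ of a three-plane bundle over $S^4$. Since $w_2$ vanishes on $S^4$, $\xi$ is determined by $p_1$ and the Euler class, which is zero in odd rank. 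The $\SO(3)$-structure forces $p_1\equiv0\pmod 4$, so $p_1(\xi)=4nu$. For the diffeomorphism classification, $H^*(E_n)$ gives $p_1(E_n)$ as a multiple of $n$ on $H^4$, which is an invariant up to sign. Orientation reversal of the fibre identifies $n$ with $-n$. The cases $n=0$ (trivial bundle) and $n=1$ (the twistor fibration $\CP^3\to S^4$) are standard.

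\textbf{Case (ii), Figure~2(c).} We glue the weight-$(1,1)$ tube $G\times_T D(\C_1\oplus\C_1)$ to a $D^2$-bundle over $S^2\times S^2$ along their common $S^3$-type boundary. The circle bundle over $S^2\times S^2$ with Euler class $2a+kb$ must be equivariantly diffeomorphic to $G\times_T S^3$. Computing $H^*$ of both boundaries shows this is possible iff $k$ is odd, since the lens-type quotient has to match $\gcd$ data. The equivariant attaching maps form a group of $G$-equivariant diffeomorphisms whose $\pi_0$ we expect to be absorbed, so there is no further parameter. Mayer--Vietoris then gives torsion-free homology, $b_2=2$ and $b_4=2$. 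We then compute the cubic form and $p_1$ from the normal Euler classes. Non-spin follows from $w_2$ being the mod-$2$ reduction of a class with odd coefficient $k$. Finally, the invariants $|k|$ (e.g.\ the divisibility of the cubic form, or $p_1$ paired with a characteristic vector) separate the $M_k$. By Wall--Jupp classification of simply connected six-manifolds with torsion-free homology, equal invariants give $M_k\cong M_{-k}$.

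\textbf{Cases (iii) and (iv).} For (iii), double $X$ along its boundary to obtain a boundaryless cohomogeneity-three quotient of a doubled space. Then apply the Euler-characteristic count: $\chi(M)$ equals the number of fixed points plus twice the number of singular orbits. This count is bounded by the Khalili Samani--Radeschi catalogue \cite{KSR} (quotient $S^3$, at most the fixed-point count of $\CP^3$), giving $f+2r\le3$; Figure~2(e) has $f=2$ and $r=1$. For (iv), the fixed point and exceptional interval give $\chi=4$ and $b_2=1$. Reconstruction as a double disk bundle yields the cohomology ring of $\CP^3$ with $p_1=4x^2$, and the Wall--Jupp classification gives $M\cong\CP^3$. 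The projective action realizes it.

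The hardest step is (ii): proving that the gluing has no hidden parameter, and that the invariants genuinely distinguish the $M_k$ rather than only bounding them. I would first compute $\pi_0$ of the equivariant diffeomorphisms of $G\times_T S^3$ via the normaliser of $T$ and its action on the slice.
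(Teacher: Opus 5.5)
\textbf{Case (iii) is the decisive gap: your argument does not work.}
\begin{itemize}
\item Doubling $X$ along $\partial X$ gives, by Petrunin's theorem, an Alexandrov space of curvature $\ge\kappa$. It is not the orbit space of any $G$-manifold. Here $\pi^{-1}(\partial X)$ consists of codimension-two face strata and isolated fixed points, not a boundary of $M$. So there is no doubled six-manifold to which the Khalili Samani--Radeschi catalogue or an Euler-characteristic count could apply.
\item The count itself is wrong. In general $\chi(M)\ne f+2r$: in case (b) $f=r=0$ but $\chi(E_n)=4$, because the boundary faces carry circle-fixed spheres. Nothing in your argument produces the number $3$; for instance $\chi(\CP^3)=4$.
\item More fundamentally, configuration (e) is smoothly realized by the $\SO(3)$ action on the quadric $Q^3\subset\CP^4$ rotating three coordinates. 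That manifold has $\chi=4$, a value Liu's theorem permits. So (e) can only be excluded by the curvature of the quotient, not by the topology of $M$.
\item The missing idea is metric. Compute the links in $DX$: at an $\R^3\oplus\R^3$ fixed point the link is the hemisphere $S^2_+(1/2)$, whose double is $S^2(1/2)$; at a $(1,1)$ interior orbit it is the Hopf quotient $S^2(1/2)$, which occurs twice in $DX$. Each of these has three-extent at most $\pi/3$. The four-point extent inequality in $DX$ then gives $f+2r\le3$.
\item You also do not address (d). It needs this same bound, to rule out an interior orbit so that Liu's theorem applies, followed by $\Theta_6=0$.
\end{itemize}

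\textbf{The other cases also have real holes.}
\begin{itemize}
\item \emph{Case (i).} Nothing in the stratification exhibits an $S^2$-fibration over $S^4$. The radial direction of $X$ is one-dimensional, and $D^3\times S^4$ is seven-dimensional. The paper instead writes $M=(G\times D^3)\cup_W D(\nu_k)$, with $e(\nu_k)=2a+kb$ on $F\cong S^2\times S^2$. It uses $\pi_1(W)=\Z/2$ to force $k=2n$. It then computes $\int xy=1$, $x^2=ny$, $w_2=0$ and $p_1=4ny$ by Mayer--Vietoris and restriction to $F$. Only then does it identify $M$ with $S(\xi_n)$ via Wall--Jupp. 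Also, $n\mapsto-n$ comes from reversing the base orientation, not the fibre orientation, since $p_1(\xi)$ does not depend on the orientation of $\xi$.
\item \emph{Case (ii), gluing.} Comparing the cohomology of the two boundaries shows only that odd $k$ is necessary. Existence needs the clutching classes in $\pi_1(\SO(3))=\Z/2$ to match: $k\bmod2$ for $S(\eta\boxtimes L^k)$, and the nonzero class for $\partial U^G_{1,1}$. Showing there is no attaching parameter needs a gauge argument: sections of the adjoint bundle, $H^1(S^2;\Z/2)=0$, $\pi_2(\SU(2))=0$, and a Weyl-element map to handle base orientation reversal.
\item \emph{Case (ii), cubic form.} It cannot be read off from normal Euler classes. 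Since $H^4(M_k)\to H^4(F)$ is not injective, $\PD[F]=2x+ky$ yields only $2A+kB=0$, $2B+kC=1$, $2C+kD=0$ for the four coefficients. The paper fixes $D=\int y^3=-2$ by equivariant localization at $F_\pm$ and $q_\pm$.
\item \emph{Case (ii), distinguishing $|k|$.} The invariants you suggest do not separate the $M_k$. For example, the cubic has divisibility one for every odd $k$: its value at $y$ is $-2$ and its value at $x+y$ is odd. The discriminant $-27(k^2-1)$ does separate them.
\item \emph{Case (iv).} The real content is the decomposition you take for granted. First, $\pi^{-1}(I)$ is equivariantly the real $\SO(3)$-manifold $\CP^2$, with normal Euler class $\pm h$; this follows because the conic represents $2h$ and has normal weight one. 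Second, its complement is an equivariant $6$-disk, by an invariant collar argument. Once that is established, your Wall--Jupp finish is a valid alternative to the paper's ``$\CP^3$ up to a homotopy sphere'' step.
\end{itemize}
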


\begin{theorem}[$\SU(2)$ and a spherical quotient]\label{thm:mainSU2}
Assume \eqref{eq:H}, $G=\SU(2)$, and $X\cong S^3$. Then the action is one of the following equivariant diffeomorphism types:
\begin{enumerate}
\item the linear action on $S(\C^2\oplus\mathfrak{su}(2))\cong S^6$;
\item $N^G_{1,1}$ or $N^G_{2j-1,2j+1}$, $j\ge1$, on $S^2\times S^4$;
\item the standard block action on $\Wflag$;
\item the commuting right block action on $\Eflag$.
\end{enumerate}
The only genuine exceptional stratum in the three-singular-orbit case is one $C_3$ interval, and its presence characterizes (iv). Consequently the middle panel of Liu's Figure~4 has labels $(1,1)$ or $(1,3)$, and the right panel has $(1,1,1)$ or $(1,1,3)$, up to permutation and deletion of labels~$1$. The surviving three-orbit patterns have positive-curvature realizations. For the two-orbit patterns, the displayed weights are necessary conditions; invariant strict positivity is not established here.
\end{theorem}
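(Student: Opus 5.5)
The plan is to follow the same pattern as the $\SO(3)$ spherical case. We first pin down the nonprincipal graph $\Sigma\subset X\cong S^3$ from Liu's Figure~4 and the local slice restrictions. We then rebuild $M$ equivariantly from tubes around the singular orbits glued along the principal part, and compare the result with the listed models.

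\textbf{Step 1: singular isotropy.} For $G=\SU(2)$, a singular orbit with isotropy of positive dimension has isotropy conjugate to $T$, to its normalizer $N(T)$, or to $G$ itself.
\begin{itemize}
\item A fixed point has slice representation a $6$-dimensional $\SU(2)$-representation whose quotient is a cone on $S^2$. Together with $X\cong S^3$, this forces the linear action $\C^2\oplus\mathfrak{su}(2)$, giving case (i); I would cite Liu's fixed-point classification here.
\item Otherwise, a Frankel/Wilking connectedness argument shows the isotropy is connected, hence equal to $T$. Then $\chi(M)=\chi(M^T)$ equals $2r$, where $r$ counts the singular orbits, and Liu's Euler characteristic theorem gives $r\in\{2,3\}$.
\item At a $T$-orbit the slice is $\C_p\oplus\C_q$ with $T$ acting by weights. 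Here $-1\in T$ acts trivially on $G/T$ but through the weights on the slice. Effectiveness together with the requirement that $-1$ not be in the principal isotropy forces $p,q$ to be odd.
\end{itemize}

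\textbf{Step 2: the exceptional edges (adjoint-weight matching).} This is the main obstacle. An exceptional edge labelled $m$ between singular vertices has cyclic isotropy $C_m\subset T$. That edge appears in the slice at each endpoint as the fixed set of $C_m$ on $\C_p\oplus\C_q$, i.e. $m\mid p$ or $m\mid q$.

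The key extra constraint comes from matching the Weyl-group and adjoint weight. Along a $T$-fixed curve joining two singular orbits, the tangent weight of $G/T$ is $2$ (the adjoint weight). The slice weights at the two ends must then be exchanged compatibly, as in a GKM-type graph.
\begin{itemize}
\item For $r=3$ the three orbits form a triangle whose edges carry the weights of $\mathfrak{su}(3)$ restricted to the embedded $\SU(2)$. I would enumerate the possibilities: either all labels are $1$ (the block subgroup, giving $\Wflag$), or one edge carries weight $3$ with the others $1$. I expect the $C_3$ case to be forced to correspond to the principal $\SU(2)$ action induced by the right action on the biquotient, giving $\Eflag$.
\item For $r=2$ the matching forces weights $(p,q)$ with $|p-q|\in\{0,2\}$, i.e. $(1,1)$ or $(2j-1,2j+1)$.
\end{itemize}
I would attack this step first, by writing the normal Euler classes of the fixed components of $T$ with signs and demanding that the localization formula (ABBV) give integer Betti data consistent with $\chi=2r$.

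\textbf{Step 3: reconstruction.} With $\Sigma$ known, the complement of the singular orbits fibres over a simple region, so $M$ is determined by the tubes $G\times_T D(\C_p\oplus\C_q)$ and one gluing map.
\begin{itemize}
\item For $r=2$, two tubes glued along $G\times_T S(\C_p\oplus\C_q)$ give $N^G_{p,q}$. The gluing is rigid because the relevant equivariant mapping-class group is connected. The underlying manifold is an $S^4$-bundle over $S^2$; it is diffeomorphic to $S^2\times S^4$ since $\pi_1\SO(5)=\Z_2$ and $p+q$ is even, so $w_2$ vanishes.
\item For $r=3$, I would use the KSR unknotting theorem to identify $\Sigma$ as an unknotted theta/triangle. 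The gluing is then unique, and the result is equivariantly diffeomorphic to the model fitting the weight data, namely $\Wflag$ or $\Eflag$. These two are distinguished by their Pontryagin classes.
\end{itemize}

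\textbf{Step 4: realizability and labels.} The label statements about Liu's Figure~4 follow directly from the enumeration in Step 2. For the three-orbit patterns, positive curvature is realized by the homogeneous Wallach metric and by Eschenburg's metric, both invariant under the relevant $\SU(2)$. For the two-orbit patterns only the necessary weight conditions are recorded.
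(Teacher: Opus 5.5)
Your plan has a genuine gap at exactly the step you identify as the main obstacle, and the route you propose there cannot succeed. The weight restrictions in the theorem are not topological. These are $p=q-2$ when $r=2$, and order three for the interval when $r=3$. For every coprime odd pair $(p,q)$ the induced action $N^G_{p,q}$ is a smooth effective action on $S^2\times S^4$ with quotient $S^3$. Likewise, the disk-bundle construction of Proposition~\ref{prop:SU2Eunique} with $3$ replaced by any odd $k\ge3$ gives a smooth three-orbit action with a $C_k$ interval. Because these are genuine smooth actions, their localization data are automatically consistent. So no argument from signed normal Euler classes, ABBV integrality, or GKM-type compatibility can exclude them. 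There are also no $T$-fixed curves to run such an argument along: $M^T$ is just the $2r$ poles of the singular spheres.

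The missing ingredient is the curvature argument behind \eqref{eq:matching}. Suppose a $C_k$ edge had transverse character $s\not\equiv\pm2\pmod k$. Take a horizontal geodesic $\gamma$ in the totally geodesic surface $M^{C_k}$ joining the two singular spheres. Along $\gamma$, the span of the orbit Killing fields transverse to $\mathfrak t$ is then the unique weight-two isotypic summand, so it is parallel. Together with the vanishing Wronskian, this gives $J_A=f(t)\cdot(\text{parallel field})$. Then $\Sec>0$ forces $f''<0$, while total geodesicity of the singular spheres (the fixed set of the center) forces $f'(0)=f'(L)=0$, a contradiction.

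Only from $s\equiv\pm2$ do you obtain $p=q-2$ and order three at a one-valent vertex. That in turn excludes the two-edge path, whose middle slice would be $(3,3)$. You also need two further exclusions. The triangle is ruled out because slice compatibility forces $c=a+b$, which is even. The bigon plus an isolated vertex requires a second curvature input: the KSR double branched cover along the cycle, combined with the four-point extent bound. Finally, your enumeration via weights of $\mathfrak{su}(3)$ restricted to an embedded $\SU(2)$ presupposes that $M$ is a flag manifold.

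Step~1 also contains errors that distort the case list. A fixed point is impossible here. The only effective six-dimensional $\SU(2)$-representation is $\C^2\oplus\R^2$, whose unit-sphere quotient is $D^2$, so a fixed point would lie on $\partial X$. Case~(i) does not come from a fixed point at all: $\C^2\oplus\mathfrak{su}(2)$ is seven-dimensional, and the action on its unit sphere has one singular orbit and no fixed point. It is the case $r=1$, which you discard by asserting $r\in\{2,3\}$. Liu gives $1\le r\le3$. For $r=1$ a single vertex carries no edge, since a loop would force equal nontrivial weights, and Liu's exceptional-free theorem then gives the linear action.

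Oddness of $p,q$ does not follow from effectiveness and trivial principal isotropy. An even $p$ merely produces an exceptional edge whose group contains $-1$. Oddness comes from Liu's result that the center acts by $-\id$ on the normal slice.

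In the $C_3$ case, ``the model fitting the weight data'' is not yet a reconstruction. You must show that the signed characters of the normal line to $A_3$ are $u=v=1$, which holds because $-1\not\equiv1\pmod3$. This forces $\nu A_3\cong\rho^*L$. Then Lemma~\ref{lem:extension} absorbs the gluing to $U^G_{1,1}$, and only after that does the stabilizer computation identify the action with $\Eflag$. Your two-orbit reconstruction is essentially the paper's, once the bigon is unknotted by the KSR argument.
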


The new geometric input in Theorem~\ref{thm:mainSU2} is a matching condition between an exceptional slice and the adjoint representation: if a $C_k$ edge has transverse character $s$, positive curvature forces $s\equiv\pm2\pmod k$. The proof uses Killing Jacobi fields and the fact, proved by Liu, that the singular spheres are fixed components of the center and hence totally geodesic.

\begin{corollary}\label{cor:globalelliptic}
Every manifold satisfying \eqref{eq:H} is rationally elliptic. In particular,
\[
 H^{\mathrm{odd}}(M;\Q)=0,\qquad
 b_2(M)=b_4(M)=\frac{\chi(M)-2}{2}\le2.
\]
The possible real homotopy types are $S^6$, $\CP^3$, $S^2\times S^4$, $S^2\times\CP^2$, $\Wflag$, and $\CP^3\#\CP^3$.
\end{corollary}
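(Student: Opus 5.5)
We argue by cases on the quotient $X=M/G$, following Liu's orbit-space analysis. For a generically free action $X$ is a three-dimensional Alexandrov space of positive curvature, so by Liu's corrected analysis \cite{LiuCorrected} it is homeomorphic to $S^3$ or $B^3$. The actions with nontrivial principal isotropy have small cohomogeneity. Liu classified them, and they give linear models on $S^6$, $\CP^3$, $S^2\times S^4$ or a homogeneous flag, all of which are rationally elliptic. So it remains to handle the three quotient configurations covered by Theorems~\ref{thm:mainSO3}, \ref{thm:mainball} and~\ref{thm:mainSU2}.

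\textbf{Boundaryless quotients.} When $X\cong S^3$, the cohomogeneity-three manifold with boundaryless orbit space is rationally elliptic by the theorem of Khalili Samani--Radeschi \cite{KSR}. This covers both $\SO(3)$ and $\SU(2)$. \textbf{Ball quotients.} When $X\cong B^3$, the boundary strata give a decomposition of $M$ as a double disk bundle, and we invoke DeVito--Galaz-Garc\'ia--Kerin \cite{DGGK}.

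Independently, the explicit identifications serve as a check. In Theorem~\ref{thm:mainball}, the manifolds $E_n$ are sphere bundles over $S^4$. The manifolds $M_k$ are a disk bundle over $S^2\times S^2$ glued to a tube over $S^2$. The remaining cases are $\CP^3$ and $S^6$. In Theorem~\ref{thm:mainSU2}, the manifolds are $S^6$, $S^2\times S^4$, $\Wflag$ and $\Eflag$, each a homogeneous space, a biquotient or a linear bundle of such. The one place where the classification is not explicit is the branching $\SO(3)$ families of Theorem~\ref{thm:mainSO3}, and there we rely on \cite{KSR}.

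\textbf{Cohomology.} Rational ellipticity together with positive Euler characteristic gives $H^{\mathrm{odd}}(M;\Q)=0$. We have $\chi>0$ because $\chi(M)=\chi(M^T)$ for the maximal torus $T$. The fixed set $M^T$ is nonempty by Berger's theorem, and $\chi(M^T)>0$ componentwise, since each component is a fixed set of the circle $T$ of even codimension in an even-dimensional positively curved manifold. Simple connectivity gives $b_1=0$, Poincar\'e duality gives $b_4=b_2$, and hence $\chi=2+2b_2$. Ellipticity bounds $\chi\le 2^{3}=8$, so $b_2\le 3$. The sharper bound $\chi\le6$ comes from Liu's Euler characteristic theorem, which is the $\chi(M)=2r$, $r\le3$ count in Theorem~\ref{thm:mainSO3} and its analogues. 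This gives $b_2\le2$.

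\textbf{Real homotopy types.} A simply connected rationally elliptic $6$-manifold with $H^{\mathrm{odd}}=0$ is formal, with pure minimal model, so its real homotopy type is determined by $b_2$ and the cubic form on $H^2$.

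For $b_2=0$ the type is $S^6$.

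For $b_2=1$ the cubic is $x^3=c$. If $c\ne0$ the model is $\CP^3$ over $\R$, after rescaling. If $c=0$ it is $S^2\times S^4$.

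For $b_2=2$ we classify real binary cubics up to $\mathrm{GL}_2(\R)$ and use the elliptic constraint. A pure model with two degree-$2$ generators needs two relations in degrees $6$ and $\ge6$ cutting $H^*$ to total dimension $6$. The possible cubics are:
\begin{enumerate}
\item $x^2y$, realized by $S^2\times\CP^2$;
\item $x^3+y^3$ (three distinct real roots after the real change of variables), realized by $\CP^3\#\CP^3$;
\item the cubic with one real and two complex roots, $xy(x+y)$-type, realized by $\Wflag$;
\item the degenerate cubics $x^3$ and $0$, which we exclude by Poincar\'e duality, since the form must be nondegenerate.
\end{enumerate}
Distinguishing the two nondegenerate three-root forms over $\R$ needs care. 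Over $\R$, a cubic with three distinct real roots and one with a single real root are inequivalent. We would match each family from the theorems, using the invariants computed in Section~\ref{sec:branchinvariants}, to one of the listed types.

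\textbf{Main obstacle.} We expect the hardest step to be this last matching for the branching $\SO(3)$ families, where only rational formulas are available. It requires checking that no degenerate or new cubic form arises.
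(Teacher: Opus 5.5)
Your overall skeleton agrees with the paper's: split on $X$, use Khalili Samani--Radeschi for $X\cong S^3$, then combine $\chi>0$ with ellipticity to get $H^{\mathrm{odd}}(M;\Q)=0$ and $\chi=2+2b_2\le 6$. However, two steps do not hold as written.

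\textbf{Ball quotients.} You assert that for every $X\cong B^3$ ``the boundary strata give a double disk bundle''. This is not proved, and the paper does not argue this way. It goes through the configurations of Section~\ref{sec:ball} one by one:
\begin{enumerate}
\item cases (a), (d), (f) are $S^6$ or $\CP^3$;
\item case (e) is \emph{excluded} by Proposition~\ref{prop:doublebound};
\item case (b) is elliptic via the fibration $S^2\to E_n\to S^4$;
\item DeVito--Galaz-Garc\'ia--Kerin is used only for $M_k$ in case (c).
\end{enumerate}
In case (c) the decomposition \eqref{eq:Mkdef} is explicit, and the six-dimensional hypothesis $b_2\le 3$ is checked by the Mayer--Vietoris computation $b_2(M_k)=2$ in Theorem~\ref{thm:Mk}. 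Your own bound $b_2\le3$ is derived from ellipticity, so it would be circular as a verification of that hypothesis.

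Your ``independent check'' paragraph is really the argument and should replace the blanket claim, with (e) named explicitly. You also omit the $\SU(2)$ actions with ball quotient. None of Theorems~\ref{thm:mainSO3}--\ref{thm:mainSU2} covers them; the paper handles them, together with $X\cong B^4$, by Liu's identification with $S^6$ or $\CP^3$. A smaller correction: the nontrivial-principal-isotropy actions are linear actions on $S^6$ only.

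\textbf{Real homotopy types.} The paper simply cites Herrmann's classification. Your direct derivation breaks where you exclude the zero cubic and the pure cube ``by Poincar\'e duality''.
\begin{enumerate}
\item The manifolds $(S^2\times S^4)\#(S^2\times S^4)$ and $\CP^3\#(S^2\times S^4)$ are simply connected and satisfy Poincar\'e duality. They have $b_2=2$, $\chi=6$, $H^{\mathrm{odd}}=0$, and exactly these two cubics. You yourself accept the zero cubic when $b_2=1$.
\item What excludes them is ellipticity. Take a pure model with two degree-two generators and formal dimension six, and suppose there is an extra even generator of degree $2d\ge4$. The half-degrees of the three relations must sum to $5+d$. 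They are also at least $2$, $2$ and $2d$, since some relation must contain a pure power of the new generator to make it nilpotent. Hence $d\le1$, a contradiction.
\item So $H^2$ generates $H^*$, and the two relations have degrees $4$ and $6$. Two relations of degree $\ge6$, as you write, would give $\dim H^*\ge 9$. Consequently $\mathrm{Sym}^2H^2\to H^4$ is onto, which is exactly the nondegeneracy you need.
\end{enumerate}
Your root counts are also interchanged. The cubic $x^3+y^3$ has one real root and is the $\CP^3\#\CP^3$ type, while $xy(x+y)$ has three real roots and is the $\Wflag$ type.

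Once this is repaired, your ``main obstacle'' disappears. The corollary only restricts the \emph{possible} real homotopy types, so no family-by-family matching of the branching graphs is needed.
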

This is a real-homotopy conclusion, not a replacement for integral reconstruction. In particular, the Eschenburg flag has the real homotopy type of $\CP^3\#\CP^3$ but is not thereby identified with that connected sum.

\subsection{Relation to Liu's results}
Liu's corrected Theorem~1.1 gives the Euler characteristic, the principal-isotropy alternatives, restrictions on exceptional groups, and the bound on singular orbits. His Theorem~1.2 already classifies fixed-point-free $\SU(2)$ actions without exceptional orbits, and his Theorem~1.3 treats the exceptional-free mixed-boundary ball configuration. None of these results is claimed as new here.

The extensions are the weighted and integral refinements. For $\SO(3)$ with quotient $S^3$, the local orientation argument eliminates the disconnected-singular-isotropy alternative left in Liu's Proposition~4.6. The global graph theorem of \cite{KSR}, combined with edge congruences and primitive normal Euler classes, gives the nine-family catalogue and the cyclic equivariant reconstruction. For $\SO(3)$ with quotient $B^3$, the sphere-bundle description suggested in Remark~4.5 is proved, the one-interior-orbit family is completely distinguished, the two-fixed-point/interior-orbit configuration is excluded by doubling, and the remaining exceptional configuration is recognized smoothly as $\CP^3$. For $\SU(2)$, the new weight restriction and normal-line reconstruction extend Liu's exceptional-free theorem to the exceptional cases in Figure~4.

The discussion of invariant product metrics is secondary. It establishes nonexistence within several broad geometric subclasses and identifies an action-specific boundary inequality for the $\SO(3)$ action. It neither removes $S^2\times S^4$ from the smooth candidate lists nor inserts unproved submersion or additional-symmetry assumptions into the classification arguments.

% ===== 02_tools.tex =====
\section{Common geometric and topological tools}\label{sec:tools}

\subsection{Results used from the literature}
We collect the precise inputs to separate the established theory from the reconstruction arguments below. The slice theorem and the classification of orbit bundles by normalizers are used in the form given by Bredon \cite[Chapters IV--V]{Bredon}. The quotient of an isometric compact-group action on a manifold with $\Sec\ge\kappa$ is an Alexandrov space with curvature at least $\kappa$; spaces of directions are the corresponding unit normal-sphere quotients \cite{AlexandrinoBettiol}.

Liu's corrected Theorems~1.1--1.3, Theorem~3.1, and Proposition~4.9 give the following reductions \cite{LiuCorrected}. The quotient is $S^3$, $B^3$, or $B^4$. Apart from the linear $\SO(3)$ actions on $S(\R^3\oplus\R^4_{\mathrm{triv}})$ and $S(V_5\oplus\R^2_{\mathrm{triv}})$, principal isotropy is trivial. The Euler characteristic is $2$, $4$, or $6$. In cohomogeneity three every exceptional stratum has dimension at most one and the closure of each exceptional component meets a singular stratum. With quotient $S^3$ there are at most three singular orbits. For $\SU(2)$ without fixed points, these are $G/T$ spheres, their union is the fixed set of the center, and exceptional isotropy is cyclic of odd order. Actions with nonempty fixed set, other than the finite-fixed-set $\SO(3)$ ball cases, are the linear cases identified by Liu using the low fixed-point-cohomogeneity classification of Grove--Kim and Grove--Searle \cite{GroveKim,GroveSearleLow}.

For a boundaryless quotient we use three results of Khalili Samani--Radeschi \cite{KSR}: their Theorem~A gives rational ellipticity; Proposition~5.1 gives the eighteen possible abstract quotient graphs; and their branched-cover and unknotting arguments, including Proposition~2.14 and Lemma~6.2, give the orbifold products needed in the cyclic reconstructions. The graph catalogue is a necessary list and is used as such. We do not assume that every listed graph is metrically realizable.

The boundary-soul theorem of Grove--Kim \cite[Theorem 1.2]{GroveKim}, with its strata interpreted as in Liu's Theorem~2.9, controls a positively curved quotient with boundary. In particular there is a unique farthest point from the boundary, and any isolated interior stratum is that point. We also use Petrunin's metric doubling theorem \cite[Theorem 2.1]{Petrunin}. Doubling an Alexandrov space of curvature at least $\kappa$ along its whole boundary preserves that lower bound. At a boundary point its new link is the double of the old link.

The extent inequality is used in the following form: four distinct points of a positively curved Alexandrov space cannot all have spaces of directions with three-extent at most $\pi/3$ \cite{GroveMarkvorsen,LiuCorrected}. The weighted Hopf quotients $S^3/S^1$ arising from two nonzero circle weights satisfy that bound \cite{HK}; the round Hopf quotient is $S^2(1/2)$. These general comparison results will be cited, not reproved.

For a closed simply connected six-manifold that is a union of two disk bundles, the rational-ellipticity theorem of DeVito--Galaz-Garc\'ia--Kerin requires $b_2\le3$ \cite[Theorem A]{DGGK}. This numerical hypothesis is essential in the general theorem. The decompositions used here have $b_2=1$ or $2$ and therefore satisfy it.

\subsection{Rational and integral recognition}
A rationally elliptic space with positive Euler characteristic has no odd rational cohomology \cite{FHT}. Together with Poincar\'e duality and Liu's Euler-characteristic theorem, this proves, for every spherical quotient,
\begin{equation}\label{eq:betti}
 H^{\mathrm{odd}}(M;\Q)=0,\qquad b_2=b_4=(\chi(M)-2)/2.
\end{equation}
Herrmann's classification of rationally elliptic six-manifolds \cite{Herrmann} gives the real homotopy types listed in Corollary~\ref{cor:globalelliptic}. In rank two they can be read from the binary cubic $v\mapsto\int_Mv^3$: a double and a simple real root gives the $S^2\times\CP^2$ type; three distinct real roots give the flag type; and one real root gives the $\CP^3\#\CP^3$ type. The zero cubic and a pure cube do not give elliptic six-manifolds with $b_2=2$, $b_3=0$. These statements classify real, not integral, data.

For integral recognition we use the Wall--Jupp theorem \cite{Wall,Jupp}. A closed simply connected oriented smooth six-manifold with torsion-free homology is determined up to oriented diffeomorphism by the isomorphism class of
\begin{equation}\label{eq:WJ}
 \bigl(H^2(M;\Z),\ b_3(M),\ \mu_M,\ w_2(M),\ p_M\bigr),
\end{equation}
where
\[
 \mu_M(x,y,z)=\int_Mxyz,\qquad p_M(x)=\int_Mp_1(M)x.
\]
The isomorphism must respect the integral lattice and the reduction of $w_2$ modulo two. We use this classification only after proving torsion-freeness. More general six-dimensional classification results, such as \cite{Zubr}, involve additional data; a rational cohomology computation is not a substitute for them. Finally, an integral homology six-sphere that is simply connected is a homotopy six-sphere, and $\Theta_6=0$ by Kervaire--Milnor \cite{KM}. This gives the ordinary smooth upgrade of Liu's topological sphere case.

\subsection{Weight normalization and singular-orbit tubes}
Choose orientations so that the homogeneous weight-one line over $G/T=S^2$ has Chern number
\begin{equation}\label{eq:delta}
 d_G=\begin{cases}1,&G=\SU(2),\\2,&G=\SO(3).\end{cases}
\end{equation}
Thus the tangent representation of $T$ on $G/T$ has weight $2/d_G$. For positive coprime integers $p,q$ write
\begin{equation}\label{eq:tube}
 U^G_{p,q}=G\times_TD(\C_p\oplus\C_q),\qquad
 B^G_{p,q}=\partial U^G_{p,q}=G\times_TS(\C_p\oplus\C_q).
\end{equation}
The orbit space of the boundary is the spindle $S^2(p,q)$, with cone points of orders $p,q$ when these exceed one. The underlying oriented four-plane bundle over $G/T$ has clutching class $d_G(p+q)\bmod2$ in $\pi_1(\SO(4))=\Z/2$. Consequently, for every pair in the $\SO(3)$ discussion and for odd pairs in the $\SU(2)$ discussion,
\begin{equation}\label{eq:tubetrivial}
 U^G_{p,q}\cong S^2\times D^4,\qquad B^G_{p,q}\cong S^2\times S^3
\end{equation}
as ordinary manifolds. This does not make the equivariant boundary bundle trivial.

\begin{lemma}[Equivariant boundary extension]\label{lem:extension}
For the tubes in \eqref{eq:tubetrivial}, every $G$-equivariant diffeomorphism of $B^G_{p,q}$ is equivariantly isotopic to the restriction of a diffeomorphism of $U^G_{p,q}$. In particular, the isotopy itself can be absorbed in a collar, so an attaching map to such a tube introduces no additional parameter once the other piece and its boundary identification have been fixed.
\end{lemma}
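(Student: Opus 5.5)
Every $G$-equivariant diffeomorphism of $B^G_{p,q}=G\times_T S(\C_p\oplus\C_q)$ is determined by its restriction to the slice. A $G$-map of $G\times_T F$ corresponds to a $T$-equivariant map $F\to G\times_T F$ whose image is $N(T)$-related to $F$. Hence the equivariant diffeomorphism group is described, via Bredon's normalizer description of $G$-spaces of the form $G\times_H F$, by two kinds of data. The first is the group of $T$-equivariant diffeomorphisms of $S^3=S(\C_p\oplus\C_q)$ covering diffeomorphisms of the spindle $S^2(p,q)$. The second is the action of $N(T)/T\cong\Z/2$, the Weyl element, which can occur only when it preserves the slice representation up to equivalence. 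The plan is to show that, up to equivariant isotopy, every element comes from a linear (orthogonal) $T$-equivariant map of $\C_p\oplus\C_q$, possibly composed with the Weyl element and with a gauge map $[g,v]\mapsto[g\,\phi(v),v]$ where $\phi:S^3\to T$ is $T$-invariant. Linear maps and Weyl composites extend radially over $D(\C_p\oplus\C_q)$, hence over $U^G_{p,q}$. It then remains to handle the gauge part.

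First, reduce to the quotient. An equivariant diffeomorphism $f$ induces a diffeomorphism $\bar f$ of the orbit space $S^2(p,q)$ that preserves the orbifold structure, so it fixes the cone points when $p\ne q$ and may swap them when $p=q=1$. The orbifold mapping class group of the spindle is generated by a reflection and, when $p=q=1$, a swap. Each of these is realized by a linear map of $\C_p\oplus\C_q$: complex conjugation composed with the Weyl element, or the coordinate exchange. So after composing with a linear map, $\bar f$ is isotopic to the identity. I will lift this isotopy equivariantly using the covering homotopy theorem for $G$-spaces (Bredon Ch.~II), which lets me assume $\bar f=\id$.

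Second, an equivariant map over the identity of the orbit space is a section of the bundle of automorphisms of the orbits. Over principal points this fibre is $N(e)=G$ acting on the right. Over the orbits with isotropy $C_p$ and $C_q$ it reduces to centralizers, which contain $T$. Since $X$ here has no fixed or singular points inside $B$ (all isotropy lies in $T$ and $T$ is the centralizer of the generic circle direction), such a map is given by a $T$-invariant function $\phi:S^3\to T$, that is, a map $S^2(p,q)\to S^1$. Because the spindle is simply connected as a topological space, $\phi$ is null-homotopic, and a null-homotopy gives an equivariant isotopy to the identity. This is the step where the triviality of \eqref{eq:tubetrivial} enters only indirectly: the gauge group is connected, so no clutching twist survives. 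The collar statement then follows formally: insert the isotopy in a collar $B\times[0,1]$ of $\partial U$ and extend by the radial extension of the linear part.

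The main obstacle is the second step. I must check carefully that the centralizer description of automorphisms over exceptional orbits really reduces to a smooth $T$-valued function. Near a $C_p$ orbit the local model is $G\times_{C_p}\C$, and its equivariant automorphisms over the identity are given by $C_p$-equivariant maps into $N(C_p)$ restricted to $T$. I would first verify smoothness in these local coordinates. If the identity component of the centralizer were larger than $T$, for instance if $N(C_2)\supset D$ in $\SO(3)$, I would argue that continuity from the principal part, where the map lies in $T$ by the first reduction, forces values in $T$.
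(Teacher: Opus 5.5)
Your first step (composing with $[g,v]\mapsto[gw,c(v)]$ to fix the spindle orientation, isotoping the induced orbifold diffeomorphism to the identity, and lifting that isotopy) is essentially the paper's. Your second step has a genuine gap: the reduction of an equivariant map over the identity to a $T$-valued function is false.

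Principal isotropy on $B^G_{p,q}$ is trivial. So $T$ is not an isotropy group of $B^G_{p,q}$, and Bredon's normalizer description does not single it out. The relevant normalizer at principal points is $N(e)=G$, as you note yourself. Concretely, an equivariant map over the identity has the form $[g,v]\mapsto[g\psi(v),v]$ with $\psi\colon S(\C_p\oplus\C_q)\to G$ satisfying $\psi(tv)=t\psi(v)t^{-1}$. It is therefore a section of the adjoint orbibundle $S(\C_p\oplus\C_q)\times_T G$. The fiber is all of $G$ over principal points and the centralizer $Z_G(C_m)$ at a cone point. Your $T$-valued maps are only the sections of the subbundle $S^2(p,q)\times T$.

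For instance, for $G=\SU(2)$ and $p=q=1$, $B^G_{1,1}\to S^2$ is a trivial principal $\SU(2)$-bundle. Its gauge group is $C^\infty(S^2,\SU(2))$, not $C^\infty(S^2,S^1)$. So the null-homotopy of maps from the simply connected spindle to $S^1$ says nothing about the actual gauge group. Your proposed repair in the last paragraph rests on the same false premise, namely that the map is $T$-valued on the principal part.

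What is missing is obstruction theory for sections with fiber $G$, respecting the cone-point constraints.

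For $\SU(2)$ the nontrivial cone orders are odd, so each $Z_G(C_m)$ is a connected circle. Move the cone values to the identity through these centralizers, then use $\pi_1(\SU(2))=\pi_2(\SU(2))=0$ on the remaining cells.

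For $\SO(3)$ this does not go through directly. First, $\pi_1(\SO(3))=\Z/2$. Second, the centralizer of a half-turn is $\OO(2)$, which is disconnected. Its identity component is $T$; the difficulty is the second component, not a larger identity component. $C_2$ cone points genuinely occur in tubes used later, such as $U^G_{1,2}$.

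The paper handles this by lifting the section to the fiberwise double cover with fiber $\SU(2)$. The lifting obstruction lies in $H^1_{\mathrm{orb}}(S^2(p,q);\Z/2)$. This group vanishes because the orbifold fundamental group $\langle a,b\mid a^p=b^q=ab=1\rangle$ is trivial for coprime $p,q$. Lifted cyclic subgroups of $\SU(2)$ have connected centralizers, so the $\SU(2)$ argument then applies. Coprimality plays no role in your second step, which is a symptom of this missing ingredient.
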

\begin{proof}
The boundary is a locally free $G$-space over $S^2(p,q)$. An induced quotient diffeomorphism preserves the cone orders. An orientation-preserving orbifold diffeomorphism of this sphere is isotopic to the identity: remove invariant disks at its at most two cone points, use the marked version of the sphere isotopy theorem \cite{Smale}, and perform the isotopy in the cyclic disk charts. There are no two distinct equal nontrivial orders, since $\gcd(p,q)=1$. Lifting this isotopy with an invariant connection reduces the problem to an equivariant map over the identity.

Such maps are sections of the adjoint orbibundle. In a chart with isotropy $C_m$, the value at the chart center lies in its centralizer. For $G=\SU(2)$, every nontrivial $m$ here is odd and its centralizer is a connected circle. Move these values to the identity through their centralizers. Obstruction theory on the remaining two-dimensional orbifold charts now reduces the section to the identity: $\pi_1(\SU(2))=\pi_2(\SU(2))=0$.

For $G=\SO(3)$ one must account for the disconnected centralizer of a half-turn. Consider the fiberwise double cover of the adjoint bundle with fiber $\SU(2)$. The obstruction to lifting a section is a class in
\[
 H^1_{\mathrm{orb}}(S^2(p,q);\Z/2)=0.
\]
Indeed the orbifold fundamental group has presentation
$\langle a,b\mid a^p=b^q=ab=1\rangle$ and is cyclic of order $\gcd(p,q)=1$. This also shows why a value in a disconnected half-turn centralizer cannot be chosen independently at a cone point. A lifted section has its values in the centralizers of the lifted cyclic subgroups of $\SU(2)$, which are connected. The preceding contraction and the same vanishing homotopy groups apply. This is the usual obstruction-theoretic description of orbibundle gauge sections; see \cite{AdemLeidaRuan} for the orbifold bundle language.

It remains to treat quotient orientation reversal. Let $w\in N_G(T)\setminus T$ and let $c$ be complex conjugation on both weight summands. Since $c(t v)=t^{-1}c(v)$ and $wtw^{-1}=t^{-1}$, the map
\[
 [g,v]\longmapsto[gw,c(v)]
\]
is a well-defined equivariant diffeomorphism of the whole tube. It reverses the orientation of its spindle quotient. Composing with it reduces the remaining case to the orientation-preserving one. The isotopies constructed above extend across an invariant collar. Smoothness at a cyclic chart center follows by working equivariantly in the covering disk throughout.
\end{proof}

The lifting argument is needed only for the nonfree spindle boundary. For a principal $G$-bundle over $S^2$, an automorphism over the identity is a section of the ordinary adjoint bundle. For $\SO(3)$ the lift obstruction is in $H^1(S^2;\Z/2)=0$, and $\pi_2(\SU(2))=0$ again makes the gauge section null-homotopic. For $\SU(2)$ this is immediate. We will use this free-boundary version in the ball cases.

\subsection{A closed exceptional interval and its normal bundle}
Suppose a closed exceptional interval of order $m$ has two circle-isotropy endpoints and no finite branching vertex. Its full inverse image, including the endpoints, is the smooth cohomogeneity-one four-manifold
\begin{equation}\label{eq:Am}
 A_m=G\times_TS(\R\oplus\C_m)
 =\mathbb P(\one\oplus L^{d_Gm})\longrightarrow S^2.
\end{equation}
Here the local axis planes in the two endpoint slices provide the smooth caps. Denote the fiber class by $f$ and the two orbit sections, with their $G/T$ orientations, by $S_-,S_+$. With a compatible choice of labeling,
\begin{equation}\label{eq:sections}
 [S_+]-[S_-]=d_Gm[f].
\end{equation}

\begin{lemma}[Normal characters]\label{lem:normal}
An oriented equivariant normal two-plane bundle $\nu$ of $A_m$ is determined by its oriented endpoint characters $u,v\in\Z$. They must satisfy $v\equiv u\pmod m$, and its Euler class obeys
\begin{equation}\label{eq:evaluations}
 \ang{e(\nu),[S_-]}=d_Gu,\quad
 \ang{e(\nu),[S_+]}=d_Gv,\quad
 \ang{e(\nu),[f]}=\frac{v-u}{m}.
\end{equation}
The unit circle bundle of $\nu$ is simply connected if and only if
\begin{equation}\label{eq:primitive}
 \gcd\left(d_Gu,\frac{v-u}{m}\right)=1.
\end{equation}
\end{lemma}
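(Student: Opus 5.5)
Since $A_m$ is cohomogeneity one with isotropy $C_m$ along the interval and $T$ at the two endpoints, I will treat an oriented equivariant $G$-vector bundle $\nu$ of rank two over $A_m$ as equivariant data over this group diagram. The first step is that such a bundle is determined by its slice data. By the equivariant tubular and slice description of Bredon, $\nu$ restricted to the endpoint orbits $S_\pm=G/T$ is $G\times_T\C_u$ and $G\times_T\C_v$ for oriented characters $u,v$ of $T$. Over the open interval, $\nu$ is $G\times_{C_m}(\text{character of }C_m)\times(0,1)$. Equivariant bundles over the interval part are determined up to isomorphism by the $C_m$-representation, since $(0,1)$ is contractible. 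Restricting $\C_u$ and $\C_v$ to $C_m\subset T$ gives the same character, so $v\equiv u\pmod m$.

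Conversely, the endpoint characters determine the gluing up to homotopy. Two identifications over the interval differ by an element of the centralizer of the $C_m$-action in $\SO(2)$, i.e. by $\SO(2)$ itself, which is connected. So $(u,v)$ determines $\nu$.

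For the Euler class evaluations I will proceed in three steps:
\begin{enumerate}
\item Restricted to $S_-=G/T$, the bundle $\nu$ is the homogeneous line of weight $u$. By the normalization \eqref{eq:delta}, its Chern number is $d_Gu$; the same argument gives $d_Gv$ on $S_+$.
\item Since $H_2(A_m)$ is generated by $[S_-]$ and $[f]$, I substitute \eqref{eq:sections}: $d_Gv=\ang{e,[S_+]}=d_Gu+d_Gm\ang{e,[f]}$, which gives $\ang{e,[f]}=(v-u)/m$.
\item As an independent check on the sign convention, I can compute this directly. The fiber $f$ is a two-sphere lying over the interval at a fixed point of $G/T$, and the bundle there is clutched by a circle-character mismatch; the computation should give $(v-u)/m$ with the labeling chosen compatibly with \eqref{eq:sections}.
\end{enumerate}

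For simple connectivity, $A_m$ is simply connected, being an $S^2$-bundle over $S^2$. The Gysin/homotopy sequence of the circle bundle $S(\nu)\to A_m$ gives
\[
\pi_2(A_m)\xrightarrow{\ \ang{e,\cdot}\ }\Z\to\pi_1(S(\nu))\to 0 .
\]
Thus $\pi_1(S(\nu))=\Z/\gcd$ of the values of $e$ on a basis of $H_2(A_m)=\pi_2(A_m)$. Using the basis $[S_-],[f]$, this gives exactly \eqref{eq:primitive}.

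I expect the main obstacle to be the sign and orientation bookkeeping: matching the oriented endpoint characters with the $G/T$ orientations of $S_\pm$ and with the labeling in \eqref{eq:sections}, so that the fiber evaluation has sign $(v-u)/m$ and not $(u-v)/m$. I will fix this by checking the case $\nu=$ the normal bundle of a section in $\mathbb P(\one\oplus L^{d_Gm})$.
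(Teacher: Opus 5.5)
Your proposal is correct and follows essentially the same route as the paper. Restricting the endpoint characters to the principal isotropy $C_m$ gives the congruence. Connectedness of the group of equivariant fiber automorphisms over the principal orbit ($\C^*$ in the paper, $\SO(2)$ in your version) removes any clutching ambiguity. The section Chern numbers $d_Gu,d_Gv$ together with \eqref{eq:sections} give the fiber evaluation. Finally, the homotopy sequence of the circle bundle, evaluated on the section--fiber basis of $H_2(A_m)$, gives \eqref{eq:primitive}.

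Your planned direct computation on the fiber and the check against $\mathbb P(\one\oplus L^{d_Gm})$ are unnecessary. The sign of the fiber evaluation follows formally from \eqref{eq:sections} once that labeling is fixed.
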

\begin{proof}
At a principal orbit $G/C_m$ the two endpoint representations restrict to the same complex character, which is exactly $u\equiv v\pmod m$. The normal bundle can be constructed on the two cohomogeneity-one disk bundles over the sections using these characters. On their common principal orbit any two complex-linear intertwiners differ by an element of $\C^*$, a connected group. Thus no additional clutching component remains. The restrictions to the sections have Chern numbers $d_Gu,d_Gv$, and \eqref{eq:sections} gives the fiber evaluation. Conversely these constructions realize every compatible pair. Since $A_m$ is simply connected, the homotopy sequence of its oriented circle bundle identifies its fundamental group with $\Z$ modulo the image of evaluation by $e(\nu)$. The section and fiber form an integral basis of $H_2(A_m)$, proving \eqref{eq:primitive}.
\end{proof}
A real transverse representation records a character only up to sign. The signs in Lemma~\ref{lem:normal}, in contrast, are fixed by an orientation of the normal bundle. This distinction is responsible for several restrictions that are invisible in an unsigned orbit diagram.

\subsection{Localization at the singular spheres}\label{sec:localization}
For a spherical quotient with connected singular isotropy, let $S_1,\ldots,S_r$ be the singular spheres, with slice weights $(p_i,q_i)$. Give each sphere the orientation from an identification with $\SU(2)/T$. For $x\in H^2(M;\Z)$ put $n_i(x)=\int_{S_i}x$. The $\SU(2)$ action used here is the given action when $G=\SU(2)$ and the double-cover action when $G=\SO(3)$.

\begin{proposition}\label{prop:localization}
There are signs $\eps_i\in\{1,-1\}$ determined by the tangent orientations such that
\begin{align}
 0&=\sum_i\frac{\eps_i n_i(x)}{p_iq_i},\label{eq:lin}\\
 \mu_M(x,y,z)&=\frac1{d_G^2}\sum_i
 \frac{\eps_i n_i(x)n_i(y)n_i(z)}{p_iq_i},\label{eq:cub}\\
 p_M(x)&=\sum_i\frac{\eps_i n_i(x)
 (4/d_G^2+p_i^2+q_i^2)}{p_iq_i}.\label{eq:pon}
\end{align}
The rational evaluation map $H^2(M;\Q)\to\Q^r$ is injective. When $\chi(M)=2r$, its image is exactly the hyperplane \eqref{eq:lin}.
\end{proposition}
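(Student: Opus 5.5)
The plan is to apply the Atiyah--Bott--Berline--Vergne localization formula to the action of the maximal torus $T\subset\SU(2)$. For $G=\SO(3)$ we use the double cover, as stipulated before the statement; the cover acts through $\SO(3)$ and has the same fixed sets. Since $M$ is simply connected and, by \eqref{eq:betti}, has no odd rational cohomology, it is equivariantly formal for $T$. Hence every class in $H^2(M;\Q)$ lifts to $H^2_T(M;\Q)$, and the restriction $H^*_T(M;\Q)\to H^*_T(M^T;\Q)$ is injective.

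\emph{Step 1: the fixed set.} With quotient $S^3$ there are no $G$-fixed points. The principal isotropy is trivial and the exceptional isotropy is finite, so the $T$-fixed set consists of finitely many isolated points. These are exactly the points $eT$ and $wT$ on each singular sphere $S_i\cong G/T$, namely the two $T$-fixed points of each sphere, which are exchanged by $w\in N_G(T)$. Let $t$ generate $H^2(BT)$ with the normalization of \eqref{eq:delta}. At $eT$ the tangent weights are $2/d_G$ along the sphere together with the slice weights $p_i,q_i$. At $wT$ all three weights are negated, up to the orientation choice in the slice, and the orientation conventions produce the sign $\eps_i$. The Euler classes are therefore $\pm(2/d_G)p_iq_i\,t^3$.

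\emph{Step 2: restrictions of degree-two classes.} For an equivariant lift $\tilde x$, the restrictions to the two poles of $S_i$ are $a_i t$ and $b_i t$. Localization on the sphere $S_i$ alone gives $n_i(x)=\int_{S_i}x=(a_i-b_i)/(2/d_G)$, up to orientation. The integral $\int_M\tilde x=0$ holds for degree reasons, and likewise $\int_M\tilde x^k$ for $k<3$. Applying ABBV to $\tilde x\cdot\tilde\one$ and to higher powers, and summing the two poles of each sphere, yields a divided-difference expression. For example, $\sum_i (a_i^k-b_i^k)/((2/d_G)p_iq_i)$ up to signs, divided appropriately, computes $\int_M x^{k}\cdot(\text{power of }t)$. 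The relation \eqref{eq:lin} comes from the vanishing of $\int_M\tilde x$ in degree $-4$ when the rational function is expanded. The equivariant cube integral produces a symmetric expression in $a_i,b_i$, and the shift freedom $\tilde x\mapsto\tilde x+ct$ must cancel. I would fix the lift by requiring $b_i=-a_i$. This is possible because the element $w$ interchanges the poles and acts by $-1$ on $t$; to justify it I would use the $N_G(T)$-symmetry, averaging the lift under the Weyl group. The condition $b_i=-a_i$ then gives $a_i=n_i/d_G$, and the cubic sum collapses to \eqref{eq:cub}. For \eqref{eq:pon}, $p_1^T$ restricts at a fixed point to the sum of squared weights, $(4/d_G^2+p_i^2+q_i^2)t^2$, and the same pole-pair combination gives the formula.

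\emph{Step 3: injectivity and the image.} If $n_i(x)=0$ for all $i$, then with the Weyl-symmetric lift all restrictions vanish. Injectivity of restriction to fixed points for equivariantly formal actions then gives $x=0$ rationally. When $\chi(M)=2r$, \eqref{eq:betti} gives $b_2=r-1$, so the image is a subspace of dimension $r-1$ lying in the hyperplane \eqref{eq:lin}. Hence the image equals that hyperplane.

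The main obstacle is the sign and normalization bookkeeping: defining $\eps_i$ consistently from the tangent orientations at both poles, and justifying the Weyl-antisymmetric lift. The averaging argument needs a Weyl-equivariant version of formality, or else a direct verification that the shift terms cancel in each formula. I would first attempt the direct cancellation. The terms depending on the shift $c$ are multiples of lower-degree integrals, which vanish by \eqref{eq:lin} and by degree.
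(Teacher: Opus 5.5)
Your proposal is correct and is essentially the paper's argument: localization at the $2r$ circle-fixed poles, opposite Euler classes at the two poles of each $S_i$, squared-weight restrictions of $p_1^T$, and the same dimension count for the image. Your weights $2/d_G,p_i,q_i$ belong to $T\subset G$ rather than to the double-cover circle, which is harmless because every formula is homogeneous in $t$.

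The lift ambiguity you flag is removed in the paper by using the $\SU(2)$-equivariant lift. It is unique because $H^j(B\SU(2);\Z)=0$ for $1\le j\le3$, and is therefore automatically antisymmetric at the two poles. Your Weyl averaging works equally well without any extra formality: $w$ acts through the connected group, so $w^*$ sends lifts of $x$ to lifts of $x$. Your direct cancellation is also valid, since the shift terms are multiples of the sum in \eqref{eq:lin}.
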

\begin{proof}
We apply the equivariant localization theorem \cite{AB,BV}; only its specialization needs explanation. Since $H^j(B\SU(2);\Z)=0$ for $1\le j\le3$, every degree-two class has a unique $\SU(2)$-equivariant lift. At the two circle-fixed points of $S_i$, the lift restricts to $n_i(x)t$ and $-n_i(x)t$, where $t$ generates $H^2(BT)$. The tangent weights for this circle are
\[
 2,\ d_Gp_i,\ d_Gq_i.
\]
Their Euler products are $2\eps_i d_G^2p_iq_i t^3$ and its negative, while the equivariant first Pontryagin class is
$(4+d_G^2p_i^2+d_G^2q_i^2)t^2$ at either point. Applying localization to $x$, to $xyz$, and to $p_1x$ gives \eqref{eq:lin}--\eqref{eq:pon}. The integration of the degree-two lift is zero for dimensional reasons.

Equation~\eqref{eq:betti} gives equivariant formality over $\Q$. Restriction of equivariant cohomology to the circle-fixed set is therefore injective. If all $n_i(x)$ vanish, the unique equivariant lift vanishes and so does $x$. The image has dimension $b_2=r-1$, which proves the hyperplane assertion.
\end{proof}
For $\SO(3)$ the factor $1/4$ in \eqref{eq:cub} is indispensable. The proposition determines rational forms on an evaluation space. It does not identify the integral evaluation lattice or establish torsion-freeness of homology.

% ===== 03_so3_s3.tex =====
\section{\texorpdfstring{$\SO(3)$ actions with orbit space $S^3$}{SO(3) actions with orbit space S3}}\label{sec:so3s3}

Throughout this section $G=\SO(3)$, $X\cong S^3$, and \eqref{eq:H} holds. Principal isotropy is trivial by Liu. We first determine the local possibilities, then use the global graph theorem, and finally recover the smooth manifold in the cyclic cases.

\subsection{Connected singular isotropy and faithful finite slices}
\begin{lemma}\label{lem:connected}
Every singular isotropy group is $T=\SO(2)$. Its normal slice is $\C_p\oplus\C_q$ with nonzero coprime weights. If $r$ denotes the number of singular orbits, then
\[
 1\le r\le3,\qquad \chi(M)=2r.
\]
\end{lemma}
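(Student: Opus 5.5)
The plan is to combine a local classification of singular isotropy with the global counting results already cited from Liu.

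\emph{Candidate groups.} Since $X\cong S^3$ has no boundary, there are no fixed points: a fixed point would give a boundary or higher-dimensional stratum. The argument is as follows. A singular isotropy group $H$ has $\dim G/H<3$, so $H$ is a closed subgroup of $\SO(3)$ of positive dimension, hence $H=T$ or $H=\OO(2)=N_G(T)$. The case $H=\SO(3)$ is excluded because the space of directions at a fixed point is $S^2/\SO(3)$ or a cone quotient $S^5/\SO(3)$, which produces boundary.

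\emph{Excluding $\OO(2)$.} The slice at such an orbit is a $3$-dimensional representation $V$ of $H$. Its space of directions $S(V)/H$ must be a $2$-sphere with no boundary, because $X$ is a topological $3$-manifold without boundary and links are spheres. For $H=\OO(2)$ the normal representation is $\C_p\oplus\C_q$ with the reflection $w$ acting. Then $S(V)/T$ is the spindle $S^2(p,q)$, and $w$ acts on it by an involution.

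This is the local orientation argument referred to in the introduction. On the spindle, $w$ conjugates both weight planes, so it reverses the orientation of $S^3/T$. An orientation-reversing involution of $S^2$ has quotient a disk (reflection) or $\RP^2$ (antipodal). Effectiveness and the principal isotropy being trivial exclude a free antipodal map from producing a sphere link. Neither quotient is a sphere link of a boundaryless $3$-manifold point; $\RP^2$ is not even a cone link in a manifold. Hence $H=T$ always.

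\emph{Weights.} The slice representation of $T$ on $\C_p\oplus\C_q$ has trivial principal isotropy, which requires the action of $T$ on $S^3$ to be generically free. This gives $\gcd(p,q)=1$. Both weights must be nonzero, since otherwise a fixed $\R^2$ summand yields a circle of $T$-isotropy and a higher-dimensional singular stratum, whereas singular points of $X$ must be isolated here.

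\emph{Counting.} The circle $T$ acts on $M$, and its fixed set is the union over singular orbits of the fixed points of $T$ on $G/T\cong S^2$. There are exactly two such points per singular orbit, since $N_G(T)/T$ has two elements. Exceptional orbits have finite isotropy and contribute no $T$-fixed points. Therefore
\[
\chi(M)=\chi(M^T)=2r.
\]
Liu's Euler-characteristic theorem gives $\chi(M)\in\{2,4,6\}$, which yields $1\le r\le 3$; equivalently, this is the cited bound of at most three singular orbits.

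\emph{Main obstacle.} I expect the hardest step to be making the exclusion of $\OO(2)$ rigorous, namely checking the orientation behaviour of $w$ on $S^2(p,q)$ and ruling out every involution quotient.
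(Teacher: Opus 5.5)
Your proof is correct, and its outline matches the paper's: no fixed points, $H\in\{T,\OO(2)\}$, coprime nonzero weights, and $\chi(M)=\chi(M^T)=2r$ combined with Liu's Euler-characteristic theorem. The genuine difference is how you exclude $\OO(2)$.

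\emph{The paper's route.} It argues on $M$. A reflection $w\in\OO(2)\setminus T$ is anti-complex-linear on $\C_p\oplus\C_q$, so it has positive real determinant there. It reverses orientation on the tangent plane of $G/\OO(2)=\RP^2$. Hence it would reverse the orientation of $T_xM$, which is impossible for an element of the connected group $G$.

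\emph{Your route.} You argue on $X$. The same anti-linearity gives $\det(w|_{\C^2})>0$, and $w$ reverses the direction of the $T$-orbits because $wtw^{-1}=t^{-1}$. So $w$ induces an orientation-reversing involution of the spindle $S^2(p,q)$. Its quotient is a disk or $\RP^2$, and neither is the link of an interior point of $X\cong S^3$. You should state the fibre-reversal step explicitly. The phrase ``conjugates both weight planes'' by itself only shows that $w$ \emph{preserves} the orientation of $S^3$.

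\emph{Comparison.} Both arguments rest on the same determinant computation. The paper's is purely linear-algebraic and stronger: it shows that an $\OO(2)$ orbit with two nonzero weights cannot occur in any such $M$, consistent with the zero-weight slice $\R_{\det}\oplus\R_{\det}\oplus\C_{\mathrm{std}}$ of the boundary $\OO(2)$ orbit in the ball case. Your argument only places such a point on $\partial X$. It therefore needs the hypothesis $X\cong S^3$ essentially, together with the classification of involutions of $S^2$ and the local homology of cones.

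\emph{Two small slips.}
\begin{enumerate}
\item The slice at a two-dimensional singular orbit is four-dimensional, not three-dimensional. Your later use of $\C_p\oplus\C_q$ is the correct one.
\item Nonvanishing of the weights is better justified as in the paper. $S(\R^2\oplus\C_q)/T$ is a disk, so a zero weight produces a boundary face. Asserting that singular points of $X$ are isolated does not explain this unless you explicitly invoke Liu's finiteness bound at that point.
\end{enumerate}
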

\begin{proof}
Fixed points would lie in the boundary of the quotient, by the six-dimensional fixed-point representations listed by Liu. A remaining singular isotropy group is $T$ or its normalizer $\OO(2)$. At an interior isolated singular orbit both circle weights on the four-dimensional slice are nonzero. An element of $\OO(2)\setminus T$ conjugates the circle to its inverse. On the slice it is anti-complex-linear on two complex dimensions, hence has positive real determinant, including when the two weights have equal absolute value. On the tangent plane of $G/\OO(2)=\RP^2$ it reverses orientation. It would therefore reverse the orientation of $M$. This is impossible for an element of the connected group $G$.

Thus every singular orbit is $G/T$. Slice effectiveness and trivial principal isotropy give $\gcd(p,q)=1$, while a zero weight would produce a boundary face. Every singular sphere contains two $T$-fixed points and no principal or exceptional orbit contains any. The Euler formula and singular-orbit bound of Liu now give the result.
\end{proof}
The determinant observation is the boundaryless counterpart of the orientation analysis already used in Liu's ball-quotient proof; it removes the disconnected-isotropy alternative from his Proposition~4.6.

At an exceptional orbit $G/\Gamma$, the slice has real dimension three. Its representation is faithful, since a nontrivial slice kernel would occur in principal isotropy nearby. It preserves orientation: both the adjoint action on the orbit tangent space and the ambient $G$ action do so. The finite rotation groups therefore give the following local incidence data \cite{Bredon,LiuCorrected}:
\begin{center}
\begin{tabular}{@{}ll@{}}
\toprule
Finite isotropy & Incident edge orders in the quotient slice\\
\midrule
$D_m$ & $2,2,m$\\
$A_4$ & $2,3,3$\\
$S_4$ & $2,3,4$\\
$A_5$ & $2,3,5$\\
\bottomrule
\end{tabular}
\end{center}
A cyclic edge has slice $\R\oplus\C_s$ for a generator of $C_m$, with $\gcd(s,m)=1$. At a $T$ vertex the two slice weights record the incident orders greater than one, so its valency is at most two. The representation of a finite group, not just its isomorphism type, remains part of the data. In particular the two faithful three-dimensional representations of $A_5$, relative to a fixed embedding, can have order-five transverse weights $\pm1$ and $\pm2$.

\begin{lemma}[Edge congruence]\label{lem:edgecong}
For an order-$m$ edge, the endpoint transverse characters satisfy $u\equiv\pm v\pmod m$. The only ambiguity in comparing the characters is inversion, not multiplication by an arbitrary unit of $\Z/m$.
\end{lemma}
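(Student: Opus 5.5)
Along an open edge the isotropy is a fixed cyclic group $C_m$, and the slice is $\R\oplus\C_s$. So the plan is to follow the transverse character of $C_m$ continuously along the edge and compare its two endpoint values inside one fixed copy of $C_m$.

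\emph{Step 1 (the character is constant along the edge).} Over the open edge, the preimage is a bundle with fiber $G/C_m$. The isotropy groups at different points are conjugate, and we may translate them to a fixed $C_m\subset T$. With this normalization, the normal representation of $C_m$ on the transverse $\C$ is a homomorphism $C_m\to\SO(2)$. It varies continuously and takes values in a discrete set, so the unordered class $\{s,-s\}\subset\Z/m$ is constant along the edge.

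\emph{Step 2 (the only freedom is the normalizer).} Transporting isotropy back to the fixed $C_m$ is defined only up to the normalizer $N_G(C_m)$ acting on the slice data. For $m\ge3$, the normalizer of $C_m\subset\SO(3)$ is $\OO(2)=N_G(T)$. The conjugation it induces on $C_m$ is either trivial (elements of $T$) or inversion (elements of $\OO(2)\setminus T$). For $m=2$, inversion is trivial, so the claim holds automatically. Hence the transported character can change only by the automorphisms of $C_m$ realized inside $G$, which are $\pm1$. Not every unit of $\Z/m$ arises, because an automorphism of $C_m$ by $k\not\equiv\pm1$ is not induced by conjugation in $\SO(3)$. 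Conjugating by $\OO(2)\setminus T$ also reverses the orientation of the transverse plane, which again only changes the sign.

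\emph{Step 3 (the endpoints).} At an endpoint with isotropy $T$ and slice $\C_p\oplus\C_q$, the edge is the fixed locus of $C_m\subset T$. The transverse character is then the restriction to $C_m$ of the complementary weight, and it lies in the same fixed $C_m\subset T$ used above. At a finite branching vertex $\Gamma\supset C_m$, the character is the eigen-character of a generator on the complement of its rotation axis in the faithful three-dimensional slice representation. This is again a character of $C_m\subset\SO(3)$, well defined up to the same $\OO(2)$-conjugation. Applying Steps 1 and 2 to the two endpoint values $u$ and $v$ gives $u\equiv\pm v\pmod m$.

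The main obstacle is Step 2. It requires showing that the identification of isotropy groups along the edge cannot compose to an outer automorphism other than inversion. This includes global monodromy around the edge, which is harmless because the edge is an interval and hence contractible. I would handle it by computing $N_{\SO(3)}(C_m)$ and its image in $\operatorname{Aut}(C_m)$ explicitly.
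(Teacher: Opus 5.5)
Your proposal is correct and follows essentially the same route as the paper. Both arguments use local constancy of the transverse slice character along the open edge once the isotropy group is identified with a fixed $C_m$. Both note that any two such identifications differ by conjugation by the normalizer $N_{\SO(3)}(C_m)=\OO(2)$, which acts on $C_m$ only by the identity or inversion, and both then restrict each endpoint slice representation to $C_m$.

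Your explicit normalizer computation and the remark that the edge is contractible only spell out what the paper's three-line proof leaves implicit. The aside about orientation reversal by $\OO(2)\setminus T$ is unnecessary.
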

\begin{proof}
Along a connected open edge, its normal slice representation is locally constant after identifying its isotropy subgroup. Conjugation in $\SO(3)$ between two identifications of a rotation subgroup is induced by an element of its normalizer and acts on the rotation circle by identity or inversion. Restricting the endpoint slice to this subgroup gives the asserted congruence.
\end{proof}

\subsection{The nine necessary weighted graphs}
Table~\ref{tab:graphs} gives the full abstract list. Filled vertices in Figure~\ref{fig:graphs} have circle isotropy; hollow vertices have noncyclic finite isotropy. The list does not discard the representation qualification in Table~\ref{tab:lollislices}.

\begin{table}[tbp]
\centering\small
\begin{tabularx}{\textwidth}{@{}clXcc@{}}
\toprule
Type & Shape & Weights and isotropy & $r$ & $\chi$\\
\midrule
I & Two points & Both slices $(1,1)$. &2&4\\
II & Interval & Edge $m\ge2$; endpoint slices $(1,m)$. &2&4\\
III & Bigon & Edges $m,n\ge2$, $\gcd(m,n)=1$; both slices $(m,n)$. &2&4\\
IV & Path & Edges $m,m+1$, $m\ge2$. &3&6\\
V & Triangle & Edges $a,b,a+b$, $a,b\ge2$, $\gcd(a,b)=1$. &3&6\\
VI & Point and interval & Isolated slice $(1,1)$ and one order-two interval. &3&6\\
VII & $Y$ & Finite vertex $D_m$; arms $2,2,m$, $m\ge2$. &3&6\\
VIII & Lollipop & Cycle orders $2,k$, stem $\ell$; finite vertex as in Table~\ref{tab:lollislices}. &2&4\\
IX & Subdivided theta & Finite vertices $D_3,A_4$; direct edges $2,3$; third path $D_3\xrightarrow{2}T\xrightarrow{3}A_4$. &1&2\\
\bottomrule
\end{tabularx}
\caption{Necessary abstract graphs for $\SO(3)$ with spherical quotient. A leaf on an order-$m$ edge has slice $(1,m)$; a two-valent circle vertex has its two incident orders as slice weights.}\label{tab:graphs}
\end{table}

\begin{table}[tbp]
\centering\small
\begin{tabularx}{\textwidth}{@{}lllX@{}}
\toprule
Finite vertex & Cycle & Stem & Required finite-slice character\\
\midrule
$D_k$, $k\ge3$ odd & $2,k$ & $2$ & Order-$k$ rotation weight $\pm2\bmod k$.\\
$A_4$ & $2,3$ & $3$ & Usual tetrahedral slice.\\
$S_4$ & $2,3$ & $4$ & Usual octahedral slice.\\
$A_5$ & $2,3$ & $5$ & Order-five weight $\pm1$.\\
$A_5$ & $2,5$ & $3$ & Order-five weight $\pm2$.\\
\bottomrule
\end{tabularx}
\caption{The five lollipop possibilities in VIII. The circle vertex on the cycle has slice $(2,k)$ and the leaf has slice $(1,\ell)$. In VII the order-$m$ transverse slice weight at $D_m$ is $\pm1$.}\label{tab:lollislices}
\end{table}

\begin{proposition}\label{prop:graphlist}
The graph $\Sigma$, after forgetting its embedding in $S^3$, belongs to exactly one family in Table~\ref{tab:graphs}, up to graph isomorphism and the indicated parameter symmetries.
\end{proposition}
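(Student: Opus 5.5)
The plan is to start from the Khalili Samani--Radeschi catalogue (their Proposition~5.1), which lists eighteen possible abstract quotient graphs for a positively curved cohomogeneity-three action with boundaryless quotient. Each of these is then filtered through the local data already fixed in this section:
\begin{enumerate}
\item By Lemma~\ref{lem:connected}, circle vertices have slice $\C_p\oplus\C_q$ with $\gcd(p,q)=1$. So a circle vertex has valency $\le2$, its incident orders are its slice weights, and a leaf on an order-$m$ edge has slice $(1,m)$.
\item Finite vertices have valency three with incident orders from the table of rotation groups.
\item The count $1\le r\le3$ with $\chi=2r$ holds.
\item Lemma~\ref{lem:edgecong} applies to every edge.
\end{enumerate}
The first step is bookkeeping. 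For each catalogue shape, I assign vertex types (circle versus finite), use the valency constraints to discard shapes with a vertex of valency $\ge4$ or with a two-valent vertex that must be finite. I then enumerate the edge-order labellings compatible with the slice tables.

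The second step handles the shapes without finite vertices (types I--VI) using only the coprimality and leaf rules. For a single edge, both ends are leaves, giving II. For a bigon, both circle vertices carry the same two orders, which must be coprime, giving III. For the path and triangle I would use the localization identity \eqref{eq:lin} with \eqref{eq:betti}, applied to the circle bundles over exceptional intervals via Lemma~\ref{lem:normal}. The primitivity condition \eqref{eq:primitive} together with the congruence $v\equiv \pm u \pmod m$ should force the middle weights into $m,m+1$ and $a,b,a+b$. Concretely, an interval from a $(1,m)$ leaf to an $(m,n)$ vertex has endpoint characters $\pm1$ and $\pm n$, so $n\equiv\pm1\pmod m$. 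Primitivity plus the $r\le3$ bound should then cut this down to $n=m\pm1$ after relabelling. In VI, the isolated point and the order-two interval come from the same analysis with one component being a point.

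The third step treats graphs with finite vertices. A $D_m$, $A_4$, $S_4$ or $A_5$ vertex has its three incident orders prescribed, and each edge must end at a circle vertex or another finite vertex. Liu's result that every exceptional closure meets a singular stratum, together with $r\le3$, bounds the possibilities to the $Y$, lollipop and theta shapes in the catalogue. The edge congruence at a finite vertex then compares the finite-slice transverse character with the circle-vertex weight. For the lollipop, the cycle vertex has slice $(2,k)$. The congruence on the order-$k$ edge forces the order-$k$ weight at the finite vertex to be $\pm2\bmod k$. This excludes even $k$ for $D_k$ (coprimality with $2$), selects the $A_5$ representation by whether the order-five weight is $\pm1$ or $\pm2$, and forces $D_k$ to have stem $2$. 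For the $Y$ graph, the leaf on the order-$m$ arm has slice $(1,m)$, giving weight $\pm1$. For the theta graph, the $T$ vertex on the third path has slice $(2,3)$. Matching both finite vertices against it leaves only $D_3,A_4$. Uniqueness of the family (the word ``exactly one'') is immediate, since the families differ in shape, vertex types or $r$.

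The main obstacle is the second step for the path and triangle types. Translating "the circle bundle over an interval is simply connected" and the global hyperplane condition into exact arithmetic relations like $n=m+1$ or $c=a+b$, rather than mere congruences, needs the signed characters to be chosen consistently around a cycle. My first attempt would be to sum the fiber evaluations $(v-u)/m$ around the triangle, using that $M\setminus$(singular set) has $\pi_1=0$. If that proves insufficient, I would fall back on the catalogue's own arithmetic constraints from \cite{KSR}.
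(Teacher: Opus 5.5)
Your frame matches the paper's: filter the eighteen Khalili Samani--Radeschi graphs through the local slice data. But local data alone do not reach Table~\ref{tab:graphs}. The steps you omit or wave through are exactly those where the paper needs global input.

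The most serious gap is the theta graph. You assert that the circle vertex on the third path has slice $(2,3)$ and that matching the two finite vertices against it leaves only $D_3,A_4$. This does not follow from coprimality, the finite-slice triples and Lemma~\ref{lem:edgecong}. Each of the following passes every local test:
\begin{enumerate}
\item $(D_2,D_3)$ with two direct order-two edges and circle slice $(2,3)$;
\item $(A_4,S_4)$ with direct edges $2,3$ and circle slice $(3,4)$;
\item $(D_4,S_4)$ with direct edges $2,4$ and circle slice $(2,3)$.
\end{enumerate}
The paper removes these with the exceptional-track obstruction \cite[Lemma 2.10]{KSR}: a complete quotient geodesic track continuing through finite vertices cannot avoid all singular vertices. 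Continuation interchanges the order-two pair at an odd $D_m$ and the order-three pair at $A_4$, and otherwise reflects each arm. Two order-two direct edges, or a direct arm reflected at both ends, therefore give a forbidden track, so both interchange mechanisms must occur. That is a missing idea, not bookkeeping.

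Likewise, the order-two edge on the lollipop cycle comes from the global graph theorem, not from local data. $S_4$ with cycle $(3,4)$ and stem $2$, or $A_5$ with cycle $(3,5)$ and stem $2$, satisfy every edge congruence.

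Two further catalogue shapes survive all local tests and need topology.
\begin{enumerate}
\item The edgeless graphs with one or three isolated $(1,1)$ vertices are never excluded in your plan; type~I is not treated at all. The paper removes the $r$ tubes $U^G_{1,1}$. Since $\partial U^G_{1,1}\cong S^2\times S^3$ is simply connected, the complementary principal $\SO(3)$-bundle over $S^3$ minus $r$ balls has nonzero $w_2$ on each boundary sphere. Summing gives $r\equiv0\pmod2$.
\item In the point-plus-interval case both leaves have slice $(1,m)$, so the edge congruence is vacuous and ``the same analysis'' yields nothing. What forces $m=2$ is that the normal circle bundle of $A_m$ must be the simply connected $\partial U^G_{1,1}$. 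With $u=1$ and $v=\pm1$, the choice $v=1$ violates \eqref{eq:primitive}, and $v=-1$ gives $m\mid2$.
\end{enumerate}

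Conversely, the path and triangle, which you call the main obstacle, need none of your localization or primitivity machinery. Apply Lemma~\ref{lem:edgecong} to the edge of largest order. For a path with labels $a<b$, the order-$b$ edge gives $a\equiv\pm1\pmod b$, hence $a=b-1$. For a triangle with $a<b<c$, the order-$c$ edge gives $a\equiv\pm b\pmod c$, hence $c=a+b$. Your congruence $n\equiv\pm1\pmod m$ sits on the smaller edge and is insufficient: $m=3$, $n=7$ satisfies it.
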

\begin{proof}
We start with the eighteen graphs of \cite[Proposition 5.1]{KSR}. The specialization is explicit: graphs $1,2,3$ there are the cases with no edges; graphs $4,5,6,8,9$ give II, IV, VII, III, V; graphs $7,18$ contain a one-vertex loop; graphs $10,12,13,15,16$ have a three-valent non-orbifold vertex; graph $11$ gives VIII, graph $14$ the subdivided theta, and graph $17$ the point-plus-interval. A non-orbifold vertex here has circle isotropy and valency at most two, so the five three-valent cases are impossible. A one-vertex loop of order $m>1$ would force the slice pair $(m,m)$, contrary to coprimality.

For a path order its labels $a<b$. At the order-$b$ edge, the endpoint transverse characters are $1,a$, so $a\equiv\pm1\pmod b$. With $a\ge2$, this gives $a=b-1$. For a genuine triangle let $a<b<c$ be its labels. Along the $c$ edge, $a\equiv\pm b\pmod c$. The positive sign is impossible and the negative sign gives $c=a+b$. These are IV and V.

The global graph theorem gives an order-two edge on the cycle of a lollipop. Write its other cycle order as $k$ and its stem order as $\ell$. Coprimality at the cycle's circle vertex forces $k$ odd. Substitution of $(2,k,\ell)$ in the four finite-slice triples gives precisely Table~\ref{tab:lollislices}. Comparing the transverse character $2$ on the order-$k$ edge and the character $1$ on the stem gives the last column. The same comparison at the order-$m$ leaf of a $Y$ forces its finite-slice weight to be $\pm1$.

For the theta reduction we use the exceptional-track obstruction of \cite[Lemma 2.10]{KSR}. A complete quotient geodesic track that continues through finite vertices cannot avoid every singular vertex. In a finite slice, continuation pairs antipodal axis directions. At $D_m$, the two order-two arms are interchanged when $m$ is odd, while all arms reflect individually when $m$ is even. At $A_4$ the two order-three arms are interchanged and the order-two arm reflects. At $S_4$ and $A_5$ every arm reflects. These statements follow directly from the rotation-axis orbits on the unit two-sphere.

Let the two direct edges of a subdivided theta have orders $r,q$. If both are order two, they form a finite-only track, whether the endpoints interchange or reflect them. If they have different orders, a direct arm reflected at both endpoints likewise gives a forbidden track. The only available interchange mechanisms are the order-two pair at an odd dihedral vertex and the order-three pair at a tetrahedral vertex. Both must occur. Thus the direct orders are $2,3$, the finite vertices are $D_3,A_4$, and the remaining arms have orders $2,3$ and meet at the sole circle vertex. This gives IX.

Two topological restrictions complete the argument. If there are no edges, all slices are $(1,1)$. Remove $r$ tubes $U^G_{1,1}$. The remainder is a principal $\SO(3)$ bundle over $S^3$ with $r$ open balls removed. Each boundary restriction has nonzero $w_2$, because its total space $S^2\times S^3$ is simply connected. The sum of the boundary sphere classes is zero modulo two, so evaluating the bundle's $w_2$ gives $r\equiv0\pmod2$. Since $1\le r\le3$, only I remains.

Finally suppose an order-$m$ interval is disjoint from an isolated $(1,1)$ vertex. A tube about its inverse image $A_m$ and the isolated-orbit tube have a common boundary diffeomorphic to $S^2\times S^3$. The normal characters of $A_m$ have absolute values one. Set $u=1$, $v=\pm1$ in Lemma~\ref{lem:normal}. The choice $v=1$ has Euler evaluations $2,0$ and a non-simply-connected unit circle bundle. Hence $v=-1$ and $m\mid2$. Since $m>1$, $m=2$, which is VI. The decomposition used here follows by removing the two disjoint quotient balls and lifting the interval direction on their free complement $S^2\times[0,1]$.
\end{proof}

\begin{figure}[!htbp]
\centering
\begin{tikzpicture}[x=.94cm,y=.91cm]
\foreach \x in {0,5.05,10.1} {\foreach \y in {0,-4,-8} {\draw[gray!30] (\x-2.35,\y-1.7) rectangle (\x+2.35,\y+1.7);}}
\node[anchor=north west,font=\bfseries] at (-2.2,1.55) {I};
\node[sv] at (-.7,0) {};\node[sv] at (.7,0) {};
\node[font=\small] at (0,-1.1) {$(1,1)\qquad(1,1)$};
\node[anchor=north west,font=\bfseries] at (2.85,1.55) {II};
\draw[edge] (4.05,0)--node[lab,above]{$m$}(6.05,0);
\node[sv] at(4.05,0){};\node[sv]at(6.05,0){};
\node[font=\small] at(5.05,-1.1){$(1,m)\qquad(1,m)$};
\node[anchor=north west,font=\bfseries] at (7.9,1.55) {III};
\draw[edge] (9.1,0) to[bend left=40] node[lab,above]{$m$}(11.1,0);
\draw[edge] (9.1,0) to[bend right=40] node[lab,below]{$n$}(11.1,0);
\node[sv]at(9.1,0){};\node[sv]at(11.1,0){};
\node[anchor=north west,font=\bfseries] at (-2.2,-2.45) {IV};
\draw[edge](-1.5,-4)--node[lab,above]{$m$}(0,-4)--node[lab,above]{$m+1$}(1.5,-4);
\node[sv]at(-1.5,-4){};\node[sv]at(0,-4){};\node[sv]at(1.5,-4){};
\node[anchor=north west,font=\bfseries] at (2.85,-2.45) {V};
\draw[edge](3.95,-3.55)--node[lab,above]{$a$}(6.15,-3.55)--node[lab,right]{$b$}(5.05,-4.95)--node[lab,left]{$a+b$}(3.95,-3.55);
\node[sv]at(3.95,-3.55){};\node[sv]at(6.15,-3.55){};\node[sv]at(5.05,-4.95){};
\node[anchor=north west,font=\bfseries] at (7.9,-2.45) {VI};
\draw[edge](9.1,-3.7)--node[lab,above]{$2$}(11.1,-3.7);
\node[sv]at(9.1,-3.7){};\node[sv]at(11.1,-3.7){};\node[sv]at(10.1,-4.8){};
\node[anchor=north west,font=\bfseries] at (-2.2,-6.45) {VII};
\draw[edge](0,-8)--node[lab,above]{$2$}(-1.4,-7.2);
\draw[edge](0,-8)--node[lab,below]{$2$}(-1.4,-8.8);
\draw[edge](0,-8)--node[lab,above]{$m$}(1.5,-8);
\node[ev,label=below right:$D_m$]at(0,-8){};
\node[sv]at(-1.4,-7.2){};\node[sv]at(-1.4,-8.8){};\node[sv]at(1.5,-8){};
\node[anchor=north west,font=\bfseries] at (2.85,-6.45) {VIII};
\draw[edge](4.05,-7.65) to[bend left=38]node[lab,above]{$2$}(6.05,-7.65);
\draw[edge](4.05,-7.65) to[bend right=38]node[lab,below]{$k$}(6.05,-7.65);
\draw[edge](4.05,-7.65)--node[lab,left]{$\ell$}(4.95,-9.05);
\node[ev,label=left:$\Gamma$]at(4.05,-7.65){};
\node[sv]at(6.05,-7.65){};\node[sv]at(4.95,-9.05){};
\node[anchor=north west,font=\bfseries] at (7.9,-6.45) {IX};
\draw[edge](9,-7.6) to[bend left=38]node[lab,above]{$2$}(11.2,-7.6);
\draw[edge](9,-7.6)--node[lab,above]{$3$}(11.2,-7.6);
\draw[edge](9,-7.6)--node[lab,left]{$2$}(10.1,-9.05)--node[lab,right]{$3$}(11.2,-7.6);
\node[ev,label=left:$D_3$]at(9,-7.6){};\node[ev,label=right:$A_4$]at(11.2,-7.6){};
\node[sv]at(10.1,-9.05){};
\end{tikzpicture}
\caption{The nine necessary abstract weighted graphs. Filled vertices have circle isotropy and hollow vertices have noncyclic finite isotropy. The picture specifies incidence, not a classification of spatial embeddings or positive-curvature realizations.}\label{fig:graphs}
\end{figure}
\FloatBarrier

\subsection{Smooth reconstruction in the cyclic cases}\label{sec:so3cyclic}
For I--III, remove the two singular-orbit tubes. In III the bigon cycle is unknotted by the branched-cover argument in \cite[Section 6, graph 8]{KSR}. In II a single tame arc with its endpoints free in $S^3$ can be straightened; in I there is no arc. The complement, with its quotient smooth structure, is $S^2(p,q)\times[0,1]$, using \cite[Lemma 6.2]{KSR}. An invariant connection lifts the interval direction and identifies the corresponding $G$-manifold with a product. Therefore
\[
 M=U^G_{p,q}\cup_\varphi U^G_{p,q}.
\]
Lemma~\ref{lem:extension} absorbs $\varphi$. The standard double is $N^G_{p,q}$. Its underlying oriented five-plane bundle over $S^2$ has $w_2=2(p+q)\bmod2=0$, so it is trivial. This proves $N^G_{p,q}\cong S^2\times S^4$.

For VI the argument at the end of Proposition~\ref{prop:graphlist} determines the normal line on $A_2$ uniquely, up to simultaneous reversal of the normal orientation and exchange of its sections. Its Euler evaluations are $2,-1$ on a section and a fiber. The last gluing is absorbed by $U^G_{1,1}$. The projective model $P_{1,1}$ realizes these data, proving uniqueness equivariantly.

For IV or V choose an edge of order $a\ge2$ whose endpoint transverse absolute weights are $b,a+b$, with $\gcd(a,b)=1$. The path case is $b=1$; in the triangle case both $a,b>1$. The third singular vertex has slice $(b,a+b)$. In the triangle case the cycle is unknotted by \cite[Section 6, graph 9]{KSR}; the orbifold product lemma then identifies the intervening region. Consequently
\begin{equation}\label{eq:projectiveglue}
 M=D(\nu A_a)\cup_{S^2\times S^3}U^G_{b,a+b}.
\end{equation}
Orient the normal line so that its first character is $u=b$. The second is $v=\pm(a+b)$. The positive choice is compatible and has fiber Euler number one. The negative choice requires $a\mid2b$, hence $a=2$. Then $b$ is odd, and both Euler evaluations $2b$ and $-(b+1)$ are even. This contradicts the simple connectedness of the common boundary in \eqref{eq:projectiveglue}. Thus the positive choice is forced. Lemmas~\ref{lem:normal} and~\ref{lem:extension} now determine the normal line and remove the final gluing ambiguity.

For the induced projective action
\begin{equation}\label{eq:projaction}
 z[z_0:z_1:z_2]=[z_0:z^a z_1:z^{a+b}z_2],
 \qquad P_{a,b}=G\times_T\CP^2,
\end{equation}
the three coordinate fixed points have slice pairs $(a,a+b)$, $(a,b)$, and $(b,a+b)$. Coordinate projective lines give the three possible edges, with labels one omitted. Thus $P_{a,b}$ realizes precisely the forced gluing data. This proves the equivariant assertions of Theorem~\ref{thm:mainSO3} for I--VI. The uniqueness argument uses the primitive normal Euler class and the boundary extension; it does not follow from the unsigned graph alone.

Let $u=c_1(L)$ on the base and $h=c_1(\mathcal O(1))$ on the projectivization, where $\mathbb P$ parametrizes lines. Put $c=2(2a+b)$. The projective-bundle formula and the vertical Euler sequence \cite{MS} give
\begin{align}
 H^*(P_{a,b};\Z)&=\Z[u,h]/(u^2,h^3+cuh^2),\quad |u|=|h|=2,
 \qquad\int_{P_{a,b}}uh^2=1,\label{eq:Pring}\\
 H^{\mathrm{odd}}(P_{a,b};\Z)&=0,\qquad
 w_2(P_{a,b})=\bar h,\qquad
 p_1(P_{a,b})=3h^2+2cuh.\label{eq:Pclasses}
\end{align}
Indeed the stable vertical tangent roots are $h,h+2au,h+2(a+b)u$. Summing their squares gives \eqref{eq:Pclasses}. Linear projective bundles over $S^2$ are classified by $\pi_1(PU(3))=\Z/3$; their clutching invariant here is
\[
 c\equiv a-b\pmod3.
\]
The two nonzero classes are interchanged by complex conjugation. To see that neither is diffeomorphic to the zero class, note that the primitive square-zero class in \eqref{eq:Pring} is $u$ up to sign. Changing the other generator to $\pm h+ju$ changes $c$ only by sign and a multiple of three. The two claimed ordinary diffeomorphism types therefore are distinct. All these cyclic models admit invariant nonnegative curvature, obtained from a bi-invariant metric on $G$ and a round or Fubini--Study metric by the free diagonal $T$ quotient; strict positivity is not a consequence of this construction.

\subsection{A smooth family with dihedral branching}
\begin{proposition}\label{prop:dihedralfamily}
For each odd $k\ge3$ there is a smooth simply connected $\SO(3)$-manifold with type-VIII graph $(D_k;2,k;2)$ whose underlying manifold is
\[
 E_n=S(\xi_n)\longrightarrow S^4,\qquad
 n=\frac{k^2-1}{8},\qquad p_1(\xi_n)=4n u.
\]
This is a smooth realization and does not assert positive curvature for every $k$.
\end{proposition}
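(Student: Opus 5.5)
We construct the manifold by gluing two equivariant pieces and then identify the result as an $S^2$-bundle over $S^4$ through its cohomology.

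\emph{Construction.} The type-VIII graph $(D_k;2,k;2)$ has one finite vertex $\Gamma=D_k$. Two circle vertices sit on it: the cycle vertex with slice $(2,k)$ and the leaf with slice $(1,2)$. By Table~\ref{tab:lollislices} the finite slice $V$ of $D_k$ is the faithful three-dimensional representation whose order-$k$ rotation acts with weight $\pm2\bmod k$.

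First we build a $G$-invariant neighbourhood $W$ of the entire singular set. Its quotient is a regular neighbourhood of the lollipop graph in $S^3$, namely a genus-one handlebody. We assemble it from three kinds of piece:
\begin{itemize}
\item the tube $G\times_{D_k}D(V)$ around the finite orbit;
\item the tubes $U^G_{2,k}$ and $U^G_{1,2}$ around the two singular orbits;
\item edge tubes $G\times_{C_m}(I\times D(\C_s))$.
\end{itemize}
These are glued along the boundary pieces over the incident edges. The glueings exist because the edge congruences of Lemma~\ref{lem:edgecong} hold by the choice of weights: the stem character $1$ matches the leaf, and the cycle characters $2$ and $k$ match at the circle vertex.

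The complement of the quotient handlebody in $S^3$ is a solid torus. We fill it with a free piece $P\times_{\mathrm{}}\!$, namely a principal $\SO(3)$-bundle over $S^1\times D^2$. That bundle is trivial, and we glue it to $\partial W$ along a free equivariant identification; the free-boundary version of Lemma~\ref{lem:extension} controls this step.

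A cleaner alternative is to realise the whole action as a quotient or linear model. For instance, take $G$ acting on $S(\R^3\oplus\R^4)$ twisted by a representation, or on an $S^2$-bundle over $S^4$ via a cohomogeneity-one structure on $S^4$. We will try that first, using $S^4\subset \R^5=$ traceless symmetric $3\times3$ matrices with isotropy $D_k$ inside an irreducible $\SO(3)$ representation. The plan does not depend on which route succeeds.

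\emph{Simple connectivity and homology.} We compute these with van Kampen and Mayer--Vietoris on this decomposition. Since $\chi=4$, $r=2$, and rational ellipticity holds via Proposition~\ref{prop:localization}, we expect $H^2=\Z$, $H^4=\Z$, and no torsion. The torsion is checked on the integral Mayer--Vietoris sequence.

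\emph{Identifying the manifold.} The cubic form vanishes when $b_2=1$ and $x^3=0$. Localization \eqref{eq:cub} with $n_1,n_2$ subject to \eqref{eq:lin} should give
\[
\eps_1 n_1^3/(2k)+\eps_2 n_2^3/2=0.
\]
With \eqref{eq:lin} this forces $x^3=0$, which is consistent. We then need to show that $M$ is an $S^2$-bundle over $S^4$. By the classification of simply connected six-manifolds with $H^2=\Z$ and $x^3=0$ (Wall--Jupp), we read off the invariants $w_2$ and $p_M$. The generator $x$ has $n_i(x)$ normalized to be coprime, and \eqref{eq:pon} then gives
\[
p_M(x)=\sum\eps_i n_i(4/4+p_i^2+q_i^2)/(p_iq_i).
\]
Comparing with $S(\xi_n)$, where $p_1=p_1(\xi_n)$ pulled back and $\int p_1 x=4n$, should yield $n=(k^2-1)/8$. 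Then $w_2$ has to be matched with the even or odd parity of the spin class of $S(\xi_n)$.

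\emph{Main obstacle.} The hardest step is the integral one: pinning down the generator of $H^2(M;\Z)$ in terms of the $n_i$ and proving torsion-freeness. Localization only gives rational data, while the final formula $4n=(k^2-1)/2$ depends on the integral normalization. We will handle this by computing $n_i$ directly on the singular spheres using the normal-character lemma along the edges (Lemma~\ref{lem:normal}), rather than through localization alone.
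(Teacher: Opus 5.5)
\textbf{There is a genuine gap, and the identification step contains an error.} You claim that localization gives $\eps_1n_1^3/(2k)+\eps_2n_2^3/2=0$ and hence $x^3=0$. This is false. Relation \eqref{eq:lin} is linear in the $n_i$ and says nothing about their cubes. Solving it gives $n_{\mathrm{cycle}}=k\,n_{\mathrm{leaf}}$ with opposite signs. Then \eqref{eq:cub} with $d_G=2$ yields $\int_Mx^3=(k^2-1)t^3/8\neq0$, which is \eqref{eq:lollinumbers} with $\ell=2$, $g_0=2$. This is as it must be, since the target has $x^2=ny$ and $\int xy=1$, so $\int_{E_n}x^3=n$; for $k=3$ the target is $E_1=\CP^3$. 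A Wall--Jupp comparison fed a vanishing cubic would point to $S^2\times S^4$ and contradict the statement.

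Beyond this, the integral inputs Wall--Jupp needs are exactly what you defer as the ``main obstacle'': torsion-freeness, the lattice normalization $t=1$, and $w_2=0$. You also cannot invoke rational ellipticity. Proposition~\ref{prop:localization} does not supply it, and its injectivity and hyperplane statements rest on \eqref{eq:betti}, i.e.\ on positive curvature. This proposition is a smooth realization with no curvature hypothesis.

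\textbf{The gluing route is not yet an existence proof.} Edge congruences are necessary, not sufficient. Specifically, you must:
\begin{itemize}
\item fix the actual subgroups and normalizer transitions at the $D_k$ vertex;
\item check that $w_2$ of the free boundary of $W$ vanishes, since that boundary is a principal $\SO(3)$-bundle over a torus and must extend over the complementary solid torus;
\item choose the filling so that the quotient is $S^3$ and $M$ is simply connected.
\end{itemize}
The free-boundary version of Lemma~\ref{lem:extension} concerns bundles over $S^2$. It does not control gauge choices over $T^2$, where $H^1(T^2;\Z/2)\neq0$, so you have no control over which manifold, or which $t$, the gluing produces. The paper notes that such reconstruction is not established for branching graphs.

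\textbf{The missing idea is to build the manifold as an $S^2$-bundle over $S^4$ from the start.} Take the cohomogeneity-one action on $S^4\subset\operatorname{Sym}^2_0(\R^3)$, with principal isotropy $D_2$ and singular isotropy $K_\pm\cong\OO(2)$ about different axes. Map $K_-\to\SO(3)$ by multiplying the circle angle by $k$, and $K_+\to\SO(3)$ by inclusion, sending reflections to half-turns. These maps agree on $D_2$ precisely because $k$ is odd. Grove--Ziller's equivariant principal-bundle construction then gives an $\SO(3)$-bundle with a commuting lifted action.

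In the associated two-sphere bundle you can read off the orbit structure directly:
\begin{itemize}
\item at the twisted end, the $D_k$ vertex and the $(2,k)$ circle orbit;
\item at the other end, the $(1,2)$ orbit and a cyclic point;
\item in between, three order-two arcs.
\end{itemize}
Simple connectivity follows from the homotopy sequence. Localization on the base, with tangent weights $(1,2)$ and fiber weights $k$ and $1$, gives $\int_{S^4}p_1(\xi)=(k^2-1)/2=4n$. No lattice, torsion, or Wall--Jupp computation is needed.

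Your ``cleaner alternative'' points toward this $S^4$ but misplaces the isotropy and lacks the twisting mechanism. The $S^4$ action has principal isotropy $D_2$, and $D_k$ appears only in the total space through the twisted homomorphism.
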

\begin{proof}
Use the cohomogeneity-one action on $S^4\subset\operatorname{Sym}^2_0(\R^3)$, with principal isotropy $D_2$ and singular isotropy $K_-,K_+\cong\OO(2)$ about different coordinate axes. Define homomorphisms into a second $\SO(3)$ by multiplying the circle angle by $k$ on $K_-$ and using the ordinary inclusion on $K_+$. Send the reflection in each normalizer to the corresponding half-turn. Because $k$ is odd, the homomorphisms agree on $D_2$. The equivariant principal-bundle construction of Grove--Ziller \cite[Proposition 1.6]{GZ} gives a principal $\SO(3)$ bundle over $S^4$ with a commuting lifted $G$ action. Take its associated two-sphere bundle.

Over the principal base orbit the three axes of $D_2$ give three order-two arcs. At the weight-$k$ end the fiber poles produce a circle orbit with slice $(2,k)$, while the equator produces $D_k$. At the weight-one end the poles give slice $(1,2)$ and the equator has cyclic isotropy. These arcs assemble into the required lollipop; at its $D_k$ vertex the order-$k$ transverse character is $\pm2$.

At the two fixed points of a rotation circle on the base $S^4$, the tangent weights are $(1,2)$ with opposite Euler signs. The associated three-plane fiber weights are $k$ and $1$. Localization gives
\[
 \int_{S^4}p_1(\xi_n)=\frac{k^2-1}{2}=4n.
\]
The total sphere bundle is simply connected by the homotopy sequence. This proves the assertion, and its integral invariants are computed in \eqref{eq:Eninvariants} below.
\end{proof}

\subsection{What the branching graphs determine}\label{sec:branchinvariants}
In type VIII let $x$ generate $H^2(M;\Z)\cong\Z$, and set $g_0=\gcd(\ell,2k)$. The evaluation relation \eqref{eq:lin} permits compatible orientations and a positive integer $t$ such that
\[
 n_{\mathrm{leaf}}(x)=\frac{\ell t}{g_0},\qquad
 n_{\mathrm{cycle}}(x)=\frac{2kt}{g_0},
\]
with orientation signs $-,+$. Equations~\eqref{eq:cub}--\eqref{eq:pon} yield
\begin{equation}\label{eq:lollinumbers}
 d:=\int_Mx^3=\frac{(4k^2-\ell^2)t^3}{4g_0^3},\qquad
 P:=\int_Mp_1(M)x=\frac{(k^2+3-\ell^2)t}{g_0}.
\end{equation}
Since $2k>\ell$ in every permitted row, $d>0$. Thus these manifolds have rational cohomology $\Q[x]/(x^4)$, not the rational ring of $S^2\times S^4$.

\begin{table}[tbp]
\centering\small
\begin{tabular}{@{}llll@{}}
\toprule
Finite vertex & Parameter & $d$ & $P$\\
\midrule
$D_k$, $k$ odd & $t\ge1$ & $(k^2-1)t^3/8$ & $(k^2-1)t/2$\\
$A_4$ & $t=2h$, $h\ge1$ & $2h^3$ & $2h$\\
$S_4$ & $t=2h$, $h\ge1$ & $5h^3$ & $-4h$\\
$A_5$, $(k,\ell)=(3,5)$ & $t=2h$, $h\ge1$ & $22h^3$ & $-26h$\\
$A_5$, $(k,\ell)=(5,3)$ & $t=2h$, $h\ge1$ & $182h^3$ & $38h$\\
\bottomrule
\end{tabular}
\caption{Necessary characteristic numbers for lollipop graphs. Integrality forces $t$ even in the final four rows. These are arithmetic restrictions, not an existence list.}\label{tab:lollinumbers}
\end{table}
A primitive degree-two class need not have a primitive vector of evaluations on the singular spheres. Thus $t$ cannot be set equal to one without an integral lattice computation. Proposition~\ref{prop:dihedralfamily} realizes $t=1$ in the dihedral row. If homology is torsion-free, Wall--Jupp further requires
\begin{equation}\label{eq:rankonecong}
 \begin{cases}
 P\equiv4d\pmod{24},&w_2=0,\\
 d\equiv0\pmod2,\quad P\equiv d\pmod{48},&w_2=\bar x.
 \end{cases}
\end{equation}
These congruences are tests, not proofs that an arbitrary listed integer is realized.

For a type-VII $Y$, absorb the orientation signs into the evaluations $u,v,w$ on the two order-two leaves and the order-$m$ leaf. Then
\[
 \frac u2+\frac v2+\frac wm=0,\qquad w=-\frac m2(u+v).
\]
On this rational plane, localization gives
\begin{align}
 \mu(x,x,x)&=\frac{u^3+v^3}{8}-\frac{m^2(u+v)^3}{32},\label{eq:Ycubic}\\
 p_M(x)&=\frac{4-m^2}{2}(u+v).\label{eq:Ypon}
\end{align}
In the coordinates $s=u+v$, $d=u-v$, the cubic is
\[
 \frac1{32}s\bigl(3d^2-(m^2-1)s^2\bigr).
\]
It has three distinct real linear factors for every $m\ge2$. Hence these manifolds have the real homotopy type of $\Wflag$, although the rational Pontryagin class vanishes only when $m=2$. In particular $m>2$ cannot give the ordinary flag manifold. The integral evaluation lattice, torsion, and $w_2$ are not determined by this rational calculation.

For IX, $r=1$ gives the rational cohomology of $S^6$. If an integral gluing computation proves that homology is torsion-free, the manifold is diffeomorphic to $S^6$ by \cite{KM}. This conditional recognition does not suppress possible torsion.

Three known positively curved actions check the catalogue. The irreducible action on $S(\mathcal H_3(\R^3))$ has one circle orbit with slice $(2,3)$, so its graph is IX; Liu's harmonic cubic $xyz$ supplies the $A_4$ orbit. The projectivized irreducible action on $\CP^3=\mathbb P(\operatorname{Sym}^3\C^2)$ has slices $(2,3)$ and $(1,2)$ and graph VIII with $D_3$. The left $\SO(3)$ action on $\Wflag$ has graph VII with $m=2$: its finite stabilizers are intersections with tori, so the noncyclic one is the abelian group $D_2$ \cite[Examples 5.2--5.4]{LiuCorrected}. These identifications verify the examples; they do not prove that a branching graph determines a unique manifold.

% ===== 04_so3_b3.tex =====
\section{\texorpdfstring{$\SO(3)$ actions with orbit space $B^3$}{SO(3) actions with orbit space B3}}\label{sec:ball}

Let $G=\SO(3)$ and $X\cong B^3$. The action with nontrivial principal isotropy is already a linear action on $S^6$, by Liu's Theorem~3.2. We therefore assume principal isotropy is trivial and the fixed set is finite or empty. Liu's Theorem~4.2 gives a boundary face with $T$ isotropy, at most one interior singular orbit, and the alternatives in his Figure~2. We retain the letters of that figure, but describe every case in words in Table~\ref{tab:ball} so that no external illustration is needed.

\begin{table}[tbp]
\centering\small
\begin{tabularx}{\textwidth}{@{}cXXX@{}}
\toprule
Case & Boundary and interior & Smooth conclusion & Curvature conclusion\\
\midrule
(a) & Nontrivial principal isotropy; fixed circle and two faces. & Linear $S^6$; Liu. & Realized on the round sphere.\\
(b) & Uniform $T$ boundary; principal interior. & $E_n=S(\xi_n)\to S^4$, $n\in\Z$. & No parameter bound from the present argument.\\
(c) & Uniform $T$ boundary; one interior $T$ orbit. & $M_k$, $k$ odd, distinguished by $|k|$. & Smoothly realizable; strict positivity not decided.\\
(d) & Two boundary fixed points; no interior singular orbit. & Standard $S^6$. & Realized by the diagonal linear action.\\
(e) & Two boundary fixed points; one interior $T$ orbit. & Smoothly realized on a complex quadric. & Impossible under \eqref{eq:H}.\\
(f) & One boundary fixed point, one boundary $\OO(2)$ orbit, one interior $T$ orbit, and a $C_2$ interval. & $\CP^3$. & Realized by the real projective block action.\\
\bottomrule
\end{tabularx}
\caption{The $\SO(3)$ ball-quotient alternatives, with Liu's Figure~2 lettering. The topology of the entire boundary, not just a boundary face, is used.}\label{tab:ball}
\end{table}

\subsection{The full stratification and the boundary normal line}\label{sec:ballstrata}
At a point of a boundary face the slice is $\R^2_{\mathrm{triv}}\oplus\C_1$. At an isolated $G$-fixed point it is $\R^3\oplus\R^3$. Neither slice contains exceptional orbits. At the isolated boundary $\OO(2)$ orbit in~(f), the slice is
\begin{equation}\label{eq:O2slice}
 \R_{\det}\oplus\R_{\det}\oplus\C_{\mathrm{std}}.
\end{equation}
It has exactly one exceptional germ, of order two, arising from a reflection in $\OO(2)$ rather than from its identity component. These are the corrected slice alternatives in Liu's Theorem~2.11 and the proof of his Theorem~4.2 \cite{LiuCorrected}.

\begin{lemma}\label{lem:noextra}
In cases~(b)--(f), no exceptional stratum occurs other than the interval displayed in~(f). The interior slice is $(1,1)$ in~(c) and~(e), and $(1,2)$ in~(f).
\end{lemma}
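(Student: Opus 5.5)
The plan has two parts. First, the only possible sources of exceptional strata are the singular strata. Second, the interior slice is pinned down by combining the local slice list with the boundary geometry.

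\emph{Locating exceptional strata.} By Liu's corrected cohomogeneity-three analysis, recalled in Section~\ref{sec:tools}, every exceptional stratum has dimension at most one, and the closure of each exceptional component meets a singular stratum. So it suffices to list the singular germs available in cases (b)--(f) and read off which of them carry exceptional germs.
\begin{enumerate}
\item The boundary-face slice $\R^2_{\mathrm{triv}}\oplus\C_1$ and the fixed-point slice $\R^3\oplus\R^3$ contain no exceptional orbits, as noted above.
\item The boundary $\OO(2)$ slice \eqref{eq:O2slice}, which occurs only in~(f), carries exactly one order-two exceptional germ.
\item The only other singular orbits are the interior $T$ orbits, at most one by Liu's Theorem~4.2. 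An interior $T$ orbit is isolated, so by the argument of Lemma~\ref{lem:connected} its slice is $\C_p\oplus\C_q$ with $p,q$ nonzero and coprime. Its exceptional germs are the cyclic edges of orders $p$ and $q$ exceeding one.
\end{enumerate}
Consequently any exceptional component is an arc in the interior of $B^3$ whose endpoints lie among the interior $T$ vertex and, in~(f), the $\OO(2)$ point. Finite branching vertices cannot occur. A noncyclic finite isotropy vertex would itself be an exceptional point whose closure must still reach a singular vertex, and its valence-three local data (table of incidence orders) would require at least three arcs ending at singular germs. The available germs number at most three in total, and each germ has a fixed order. Matching these orders against the table $D_m,A_4,S_4,A_5$ leaves no consistent assignment.

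\emph{Pinning down the interior slice.} Suppose an interior $T$ orbit has slice $(p,q)\ne(1,1)$. Then an edge of order $m=\max(p,q)\ge2$ leaves it. In (c) and (e) there is no boundary germ to absorb this edge, so it must return to the same vertex as a loop. A loop of order $m$ would force the slice pair $(m,m)$, which contradicts coprimality, exactly as in Proposition~\ref{prop:graphlist}. Hence the slice is $(1,1)$ and no exceptional stratum exists.

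In (f) the order-two germ at the $\OO(2)$ point must travel to the interior vertex, since it cannot end at a boundary face or the fixed point. That interior slice therefore contains weight $2$, say $(2,q)$ with $q$ odd. If $q>1$, the order-$q$ edge has nowhere to go, for the same reason as in (c) and (e). So the slice is $(1,2)$ and the only exceptional stratum is the displayed interval. In (b) and (d) there is no interior singular orbit and no exceptional germ at all.

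\emph{Main obstacle.} The delicate step is to justify rigorously that an exceptional arc cannot terminate at a boundary point or wander to the boundary faces. This amounts to verifying that the boundary-face slice $\R^2_{\mathrm{triv}}\oplus\C_1$ has trivial isotropy off the $T$-face for all nearby orbits. It also requires excluding exceptional closed curves disjoint from singular strata. For the latter I would invoke Liu's closure statement directly, together with the edge congruence Lemma~\ref{lem:edgecong} to confirm that the order-two label on the (f) interval is compatible at both ends.
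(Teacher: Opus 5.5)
\textbf{There is a genuine gap: the exclusion of noncyclic finite (branching) vertices fails}, and your later steps for (c), (e) and (f) depend on it.

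\emph{The counting claim.} You say a noncyclic finite vertex needs three arcs ending at singular germs. This ignores arcs that join two branching vertices, or that close up as a loop at a single one (for instance the two order-two arms of $D_m$). For example, in (c) or (e), take $D_2$ and $D_3$ joined by two order-two edges, with their remaining arms, of orders $2$ and $3$, running to an interior circle vertex with slice $(2,3)$. This escapes your count entirely.

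\emph{The matching claim.} Even when all three arcs do end at singular germs, your assertion that ``matching these orders against the table leaves no consistent assignment'' is false in case~(f). Join the order-two germ at the boundary $\OO(2)$ point, and both germs of an interior circle vertex with slice $(2,m)$, $m$ odd, to a $D_m$ vertex. The orders $2,2,m$ match, and the slice is coprime. The same works for $S_4$ with interior slice $(3,4)$ and for $A_5$ with $(3,5)$. These are exactly the lollipop incidences of Table~\ref{tab:lollislices}, with the leaf replaced by the $\OO(2)$ point. The edge congruences of Lemma~\ref{lem:edgecong} do not rule them out either. So no purely combinatorial order count can finish this step. In particular, your routing of the $\OO(2)$ germ straight to the interior vertex in (f) is unjustified as written.

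\emph{The missing input.} What you need is the boundary-soul theorem of Grove--Kim recalled in Section~\ref{sec:tools}. An interior point with noncyclic finite isotropy is an isolated interior stratum, so it must be the unique point of $X$ farthest from $\partial X$. In (c), (e) and (f) that point is already the isolated interior singular orbit, so no branching vertex can exist. In (b) and (d), a branching vertex at the soul would lie in an exceptional component whose closure meets no singular germ. Liu's closure statement excludes this, and that part of your argument is already fine.

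Once branching is excluded this way, the rest of your proof matches the paper's. That covers the loop-at-the-soul contradiction with coprimality in (c) and (e), and the forced order-two interval with remaining weight one in (f).

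Your ``main obstacle'' paragraph is also misplaced. That boundary-face and fixed-point slices carry no exceptional germs is simply the slice list you already used in item (i). The genuinely delicate step is the branching one.
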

\begin{proof}
An interior noncyclic finite isotropy group would give an isolated interior stratum. By the boundary-soul theorem it must be the soul. In cases with an interior singular orbit the soul is already that orbit, so such a finite stratum is impossible. In case~(b) there are no eligible boundary endpoints for any exceptional edge, and an exceptional component disjoint from singular strata is excluded by Liu's Proposition~3.6. The same argument applies to~(d), since its boundary fixed-point slices have no exceptional germs.

In~(c) and~(e) all exceptional endpoints would have to be the sole interior singular orbit. With no branching vertex available, an exceptional edge would then close to a loop at that orbit. Both slice weights would be its order, contradicting coprimality. Thus there are no exceptional edges and the two nonzero slice weights have absolute value one. In~(f), the boundary normalizer orbit supplies exactly one order-two germ. It must end at the soul, since a face or a fixed point cannot be an endpoint and there is no other boundary normalizer orbit. A second edge or an additional component would have no possible endpoint except a prohibited loop at the soul. The remaining interior weight is therefore one.
\end{proof}

When the boundary has a single orbit type, let $F=\pi^{-1}(\partial X)$. It is a smooth codimension-two submanifold. Its orbit bundle over $S^2$ has structure group $N_G(T)/T=\Z/2$ and is consequently trivial. Fix
\begin{equation}\label{eq:Fandnu}
 F=S^2_o\times S^2_b,\qquad
 e(\nu F)=2a+kb,
\end{equation}
where $a,b$ generate the cohomology of the orbit and quotient factors, respectively. The coefficient two is forced by the weight-one normal character on $G/T$. The integer $k$ is a genuine normal-bundle invariant. If $\eta$ is the homogeneous complex line over $S^2_o$ with $c_1(\eta)=2a$ and $c_1(L)=b$, then the equivariant normal line is
\[
 \nu_k=\eta\boxtimes L^k.
\]
This also constructs the full boundary tube $V_k=D(\nu_k)$.

\subsection{Uniform boundary with principal interior: the sphere bundles}
\begin{proposition}\label{prop:ballb}
In case~(b), $k=2n$ is even, and the underlying manifold is $E_n=S(\xi_n)\to S^4$ with $p_1(\xi_n)=4n u$. Its integral data, in compatible generators $x\in H^2(M)$ and $y\in H^4(M)$, are
\begin{equation}\label{eq:Eninvariants}
 \begin{gathered}
 H^{\mathrm{odd}}(M;\Z)=0,\quad H^2(M)=H^4(M)=\Z,\quad
 \int_Mxy=1,\quad x^2=ny,\\
 w_2(M)=0,\qquad p_1(M)=4ny.
 \end{gathered}
\end{equation}
Every integer $n$ is smoothly realizable in this stratum, and the ordinary diffeomorphism type is determined by $|n|$.
\end{proposition}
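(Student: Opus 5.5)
In case~(b) we decompose $M$ along the boundary tube. The quotient is $B^3$ with a uniform $T$ boundary and a principal interior. Removing a collar of $\partial X$ leaves a closed ball $B'$ of principal orbits, so $M=V_k\cup_\partial (B'\times G)$.

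\textbf{Step 1: gluing and $k=2n$.} The principal bundle over a ball is trivial, so the interior piece is $D^3\times\SO(3)$. The common boundary $S(\nu_k)$ is then the principal $\SO(3)$ bundle over $S^2_b$ obtained by restriction. As the unit circle bundle of $\eta\boxtimes L^k$ over $S^2_o\times S^2_b$, the fibration $S(\nu_k)\to S^2_b$ has fiber $S(\eta)=\SO(3)$ and is the principal bundle with $w_2=k\bmod 2$; the fiber over $S^2_b$ is twisted by $L^k$. This bundle must extend over $B'$, hence must be trivial. Therefore $w_2=0$ and $k=2n$ is even.

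The free-boundary version of Lemma~\ref{lem:extension} shows that the gluing introduces no further parameter. So $M$ is determined by $n$, and conversely every $n$ is realized by this same gluing.

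\textbf{Step 2: identification with $E_n$.} We compare with the models $S(\xi)\to S^4$, where $G$ acts by the cohomogeneity-one action on $S^4\subset\operatorname{Sym}^2_0(\R^3)$ lifted to $\xi$, as in Proposition~\ref{prop:dihedralfamily} with both homomorphisms taken equal. A cleaner route is to use $G$ acting on $\R^3\oplus\R^3$. Concretely, we take $E=S(\R^3\otimes\text{stuff})$, or more directly the $\SO(3)$-equivariant bundles $S(\xi)\to S^4$ where $S^4$ is the suspension-type orbit space. Here $\xi$ is glued from two trivial pieces via a clutching map $S^3\to\SO(3)$ of degree $n$, which can be taken $\SO(3)$-equivariant for the conjugation action.

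The orbit space of such a model is a $B^3$ with uniform $T$ boundary: the boundary corresponds to the poles of the fibers. We then compute its invariant $k$ by localization, or by comparing $e(\nu F)$ evaluated on $S^2_b$. This gives $k=2n$ with $p_1(\xi)=4nu$, checked by the same localization as in Proposition~\ref{prop:dihedralfamily} with fiber weights $2n$-dependent. Uniqueness from Step~1 then identifies $M$ with $E_n$.

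\textbf{Step 3: invariants and classification.} By the Gysin/Serre sequence of $S^2\to E_n\to S^4$, the homology is torsion-free with $H^2=H^4=\Z$. Let $x$ restrict to a generator on the fiber; it is unique up to adding multiples of $u$, which is $0$ in degree $2$. Take $y=\pi^*u$, so that $\int xy=1$.

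The relation $x^2=ny$ comes from the Leray–Hirsch relation for sphere bundles of oriented three-plane bundles: $x^2=\tfrac14 p_1(\xi)$ after choosing $x$ suitably. For $S(\xi)$ with $\xi$ of rank three, $H^*=H^*(S^4)[x]/(x^2-\tfrac{p_1}{4})$, which we verify via the $\SO(3)$-structure as a projectivized $\C^2$-bundle when $w_2(\xi)=0$.

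The tangent bundle satisfies $TE\oplus\R=\pi^*(TS^4\oplus\xi)$. Hence $w_2=0$ and $p_1=\pi^*p_1(\xi)=4ny$.

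Finally, we apply Wall–Jupp \eqref{eq:WJ}. The cubic form is $\mu(ax+by)^3=a^3n+3a^2b$, and $p_M(ax+by)=4na$. The isomorphism classes are determined by $|n|$ (via $x\mapsto -x$, together with orientation reversal for the unoriented statement), and distinct $|n|$ are separated by the divisibility of $p_1$. The case $n=0$ gives $S^2\times S^4$, and $n=\pm1$ gives $\CP^3$.

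\textbf{Main obstacle.} The key difficulty is Step~2: matching the abstractly glued manifold with the concrete $E_n$ and pinning down $k=2n\leftrightarrow p_1=4nu$ with the correct normalization. I would settle it by computing $e(\nu F)$ directly in the model via localization at $T$-fixed points.
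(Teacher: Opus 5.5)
Your Steps~1 and~3 are sound. In Step~1 you get $k$ even from the clutching class $k \bmod 2$ of $S(\nu_k)\to S^2_b$, whereas the paper uses $\pi_1(W)=\Z/2$ and the divisibility of $2a+kb$; both work. Step~3 correctly derives the data of $S(\xi_n)$ from $\pi^*\xi=V\oplus\R$ with $e(V)=\pm2x$. One slip: the cubic form lives on $H^2=\Z x$ and is simply $\int(ax)^3=na^3$, since $y$ has degree four.

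The gap is Step~2, which carries the entire identification $M\cong E_n$. You never compute anything about $M$ itself, so every integral assertion in the proposition is inherited from $E_n$ through an identification you have not established.

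\begin{itemize}
\item Your first candidate model is wrong for case~(b). With both Grove--Ziller homomorphisms equal to the inclusion, you get $S^4\times S^2$ with the diagonal action over the cohomogeneity-one $S^4\subset\operatorname{Sym}^2_0(\R^3)$. Its principal base isotropy $D_2$ creates $C_2$ exceptional strata.
\item The correct model is only alluded to.
\item Above all, the normalization ``this gives $k=2n$'' is asserted, not computed.
\end{itemize}

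Equivariant uniqueness from Step~1 only says that $M$ is whichever model has the same normal invariant. Without $|k(E_n)|=2|n|$ you know neither which $E_n$ your $M$ is, nor that every even $k$ is attained by some $E_n$. So $x^2=(k/2)y$ and $p_1(M)=2ky$ remain unproved.

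To complete your route, proceed as follows.
\begin{itemize}
\item Take $S^4=S(\R^3\oplus\R\oplus\R)$ and use $D^4_\pm\times\R^3$ with the diagonal action. Each hemisphere is equivariantly contractible to a fixed point.
\item Glue along the equator $\Sp(1)$, on which $\SO(3)$ acts by conjugation, via $q\mapsto\Ad_{q^n}$. This clutching map is equivariant and represents $n$ times a generator of $\pi_3(\SO(3))$.
\item Verify stratification~(b). Poles of fibers over nonfixed base points have isotropy $T$, fibers over the fixed circle are single $G/T$ orbits, and everything else is free.
\item Compute $k$. One way is localization. Another is to compare $\langle p_1(M),[F]\rangle=\langle(2a+kb)^2,[F]\rangle=4k$ with $\langle 4n\pi^*u,[F]\rangle$; this needs $\deg(F\to S^4)=\pm2$.
\end{itemize}

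The paper avoids any model. From $M=(G\times D^3)\cup_W V_k$, Mayer--Vietoris gives torsion-free cohomology with restriction to $F$ injective in degrees two and four, and $x|_F=a+nb$, $y|_F=2ab$, $\PD[F]=2x$. Restriction then yields $x^2=ny$, $w_2=0$, and $p_1(M)|_F=e(\nu_k)^2=4n\,y|_F$. Your Step~3 together with Wall--Jupp then identifies $M$ with $E_n$.

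Once finished, your route would give a stronger conclusion: every case~(b) action is equivariantly the lifted action on $S(\xi_n)$. The paper's route gives only the ordinary diffeomorphism type, but it needs neither an equivariant model nor a normalization computation.
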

\begin{proof}
After removing the boundary collar, the principal region is $U=G\times D^3$. Thus
\[
 M=U\cup_WV_k,\qquad W=S(\nu_k)\cong G\times S^2.
\]
For a circle bundle over the simply connected base $F$, the fundamental group of its total space has order equal to the divisibility of its Euler class. Hence $\pi_1(W)=\Z/2$ forces $k=2n$. Put $c=a+nb$, so $e(\nu_k)=2c$.

The Gysin sequence gives $H^2(W)=\Z\oplus\Z/2$ and $H^4(W)=\Z/2$. The torsion generator from $H^2(U)=\Z/2$ restricts to the class of $c$ modulo $2c$. The restriction $H^3(U)\to H^3(W)$ is an isomorphism, since it restricts as an equivariant degree-one map on a principal $G$ orbit. Mayer--Vietoris now gives torsion-free cohomology in degrees $0,2,4,6$, one generator in each degree, and zero odd cohomology. Generators can be chosen with
\[
 x|_F=c,\qquad y|_F=2ab,\qquad\int_Mxy=1.
\]
For the last normalization, $\PD[F]=2x$ and $\int_Fy|_F=2$ imply $\int_Mxy=1$. Restriction to $F$ is injective in degree four. It follows from $c^2=2nab$ that $x^2=ny$.

Since $TM|_F=TF\oplus\nu_k$, $F$ is spin, and $e(\nu_k)=2c$, restriction gives $w_2(M)|_F=0$. Restriction in degree two modulo two is injective, so $w_2(M)=0$. Moreover $p_1(TF)=0$ and
\[
 p_1(M)|_F=e(\nu_k)^2=8nab=4n\,y|_F,
\]
which proves \eqref{eq:Eninvariants}.

Oriented three-plane bundles over $S^4$ are classified by an integer normalized by $p_1(\xi_n)=4n u$ \cite{MS,Husemoller}. Their sphere bundles have exactly \eqref{eq:Eninvariants}: lift the structure group to $\SU(2)$ and use its rank-two complex projectivization, or use the vertical Euler class $2x$ and the stable tangent splitting. Wall--Jupp identifies $M$ with $E_n$. The cubic and its integral divisibility distinguish $|n|$; simultaneously changing $x,y$ to $-x,-y$ identifies the systems for $n$ and $-n$.

For existence as a smooth action, the principal $G$ bundle $S(\nu_{2n})\to S^2_b$ has clutching class $2n\bmod2=0$. It can therefore be glued equivariantly to $G\times D^3$. Van Kampen gives simple connectivity, and the local models give exactly the strata in~(b). Automorphisms of the free boundary are absorbed using the free-boundary argument following Lemma~\ref{lem:extension}, together with extension of a sphere diffeomorphism over $D^3$. This proves smooth realizability, with no assertion about strictly positive metrics.
\end{proof}
The cases $n=0$ and $n=1$ are $S^2\times S^4$ and the twistor space $\CP^3$, respectively. Proposition~\ref{prop:ballb} proves the sphere-bundle description proposed in Liu's Remark~4.5. It does not show that positive curvature forces $|n|\le1$.

\subsection{Uniform boundary with one singular interior orbit}
The interior tube in~(c) is $U=U^G_{1,1}$ by Lemma~\ref{lem:noextra}. The region between it and the outer boundary tube is a principal bundle over $S^2\times[0,1]$, hence an equivariant product in the interval direction. It can be absorbed into a collar. Consequently the entire manifold is
\begin{equation}\label{eq:Mkdef}
 M_k=U^G_{1,1}\cup_\varphi D(\eta\boxtimes L^k).
\end{equation}

\begin{proposition}\label{prop:oddglue}
The gluing \eqref{eq:Mkdef} exists exactly when $k$ is odd. For each such $k$ it gives a closed simply connected smooth effective $G$-manifold with the complete stratification~(c). The equivariant attaching map introduces no further parameter.
\end{proposition}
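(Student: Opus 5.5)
The plan is to compare the two boundaries as $G$-spaces. The boundary of $U^G_{1,1}$ is $B^G_{1,1}=G\times_T S(\C_1\oplus\C_1)$. Since $T$ acts freely on $S^3$ with weight $(1,1)$, this is a free $G$-manifold over $S^2(1,1)=S^2$. Its orbit map is the principal $G$-bundle $G\times_T S^3\to S^3/T=S^2$. The boundary $W_k=S(\eta\boxtimes L^k)$ is likewise a free $G$-manifold over $S^2_b$, since $F=G/T\times S^2_b$ and the circle of $\eta$ over $G/T$ is $G$ itself. Its orbit map is the principal bundle obtained from $S(\eta\boxtimes L^k)\to S^2_b$. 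A $G$-equivariant diffeomorphism $\varphi:B^G_{1,1}\to W_k$ therefore exists exactly when these two principal $\SO(3)$-bundles over $S^2$ are isomorphic, up to a diffeomorphism of the base. Such bundles are classified by $w_2\in H^2(S^2;\Z/2)=\Z/2$, which is unchanged by base diffeomorphisms. So existence reduces to comparing two mod-two invariants.

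For the interior tube, $w_2$ is nonzero. The paper already uses this in the proof of Proposition~\ref{prop:graphlist}, where the total space $S^2\times S^3$ is simply connected. I would reprove it from the clutching description. The bundle $G\times_T S^3\to\CP^1$ is associated to the Hopf $T$-bundle through $T\subset\SO(3)$. The inclusion $T\to\SO(3)$ is nonzero on $\pi_1$, and the Hopf bundle has Chern number one, so the clutching class is $1\bmod 2$.

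For $W_k$, the principal bundle over $S^2_b$ is $P_k=\SO(3)\times_T S(L^k)$, where $T$ acts on $\SO(3)$ from the right. This is the reduction of the frame bundle of $\eta\boxtimes L^k$ along the quotient factor. Its clutching class is $k\bmod2$, by the same $\pi_1(T)\to\pi_1(\SO(3))$ argument. This is consistent with Proposition~\ref{prop:ballb}, where even $k$ glued to $G\times D^3$. The gluing therefore exists if and only if $k$ is odd.

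For odd $k$ I would then verify three things.
\begin{enumerate}
\item Smoothness and effectiveness are inherited from the two pieces.
\item Van Kampen gives simple connectivity. $U^G_{1,1}\cong S^2\times D^4$ is simply connected, and $\pi_1(V_k)=\pi_1(F)=1$. The actual check is that $\pi_1(W_k)$ is cyclic of order the divisibility of $2a+kb$, which is $1$ for odd $k$. So $W_k$ is itself simply connected and the union is simply connected regardless.
\item The orbit space is a ball with uniform $T$-boundary and one interior $(1,1)$ orbit. Both local models are the standard ones, so the stratification is exactly~(c).
\end{enumerate}

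The assertion that there is no further parameter comes from Lemma~\ref{lem:extension} applied to $U^G_{1,1}$. Any two equivariant attaching maps differ by an equivariant self-diffeomorphism of $B^G_{1,1}$. By the lemma, that self-diffeomorphism is equivariantly isotopic to one extending over the tube, so the resulting manifolds are equivariantly diffeomorphic. Orientation reversal of the quotient $S^2$ is covered by the lemma's $[g,v]\mapsto[gw,c(v)]$ map.

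The main obstacle is sign and identification bookkeeping in the $W_k$ computation. One must correctly show that the $G$-orbit bundle of $S(\eta\boxtimes L^k)$ has $w_2\equiv k$. This means the $\eta$ factor contributes the trivial $G$-direction and only $L^k$ twists. If this turns out subtle, I would instead compute $w_2$ by restricting to a section $\{o\}\times S^2_b$ and using the $T$-reduction there.
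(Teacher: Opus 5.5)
Your proposal is correct and follows essentially the same route as the paper's proof. The paper compares the clutching classes in $\pi_1(\SO(3))=\Z/2$ of the two free boundaries: $k \bmod 2$ for $W_k$, from the weight-$k$ loop in $T$, and the nonzero class for $\partial U^G_{1,1}$, as the Hopf bundle extended along $T\to G$. It then uses van Kampen with the primitive class $2a+kb$, and absorbs any difference of attaching maps into $U^G_{1,1}$ by Lemma~\ref{lem:extension}, exactly as you do.
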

\begin{proof}
The boundary $W_k=S(\eta\boxtimes L^k)$ is a free $G$-space over $S^2_b$. Local trivializations of $L^k$ show that its principal $G$-bundle clutching class is $k\bmod2$: the transition loop is the weight-$k$ rotation loop in $T\subset G$. On the other side,
\[
 \partial U^G_{1,1}=G\times_TS^3\longrightarrow S^3/T=S^2
\]
is the extension of the Hopf circle bundle along $T\to G$. Its clutching class is the nonzero element of $\pi_1(G)=\Z/2$. Therefore an equivariant boundary identification exists precisely for odd $k$.

For odd $k$, the Euler class $2a+kb$ is primitive and $W_k$ is simply connected. Both disk-bundle pieces in \eqref{eq:Mkdef} are simply connected, so van Kampen gives $\pi_1(M_k)=0$. The principal isotropy is trivial, and the stated slices contain no further exceptional strata. To compare two attaching maps, compose one with the inverse of the other on $\partial U^G_{1,1}$. Lemma~\ref{lem:extension} absorbs this boundary automorphism into that tube. This proves uniqueness for the fixed normal-bundle data.
\end{proof}

\begin{theorem}[Integral classification of the odd family]\label{thm:Mk}
For odd $k$, the homology of $M_k$ is torsion-free, $b_2=b_4=2$, and $H^{\mathrm{odd}}(M_k;\Z)=0$. There is a basis $x,y$ of $H^2(M_k;\Z)$ restricting to $a,b$ on $F$ such that
\begin{align}
 \int_{M_k}x^3&=\frac{k^3-k}{4},&
 \int_{M_k}x^2y&=\frac{1-k^2}{2},&
 \int_{M_k}xy^2&=k,&
 \int_{M_k}y^3&=-2,\label{eq:Mkcubic}\\
 w_2(M_k)&=\bar y,&&\PD[F]=2x+ky,\label{eq:Mkw2}\\
 \int_{M_k}p_1(M_k)x&=3k,&&
 \int_{M_k}p_1(M_k)y=-2.\label{eq:Mkp1}
\end{align}
Moreover $M_k\cong M_\ell$ if and only if $|k|=|\ell|$, and every $M_k$ is rationally elliptic.
\end{theorem}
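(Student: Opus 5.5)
We follow the pattern of Proposition~\ref{prop:ballb}, now with the decomposition \eqref{eq:Mkdef}: $M_k=U\cup_W V$, with $U=U^G_{1,1}\simeq S^2$, $V=D(\eta\boxtimes L^k)\simeq F=S^2_o\times S^2_b$, and $W=W_k$.

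\textbf{Step 1: homology of the pieces and of $M_k$.} Since $2a+kb$ is primitive for odd $k$, the Gysin sequence of the circle bundle $W\to F$ gives
\[
H^2(W)=\Z^2/(2,k)\cong\Z,\qquad H^3(W)\cong\Z,\qquad H^4(W)=0.
\]
This is consistent with $W\cong B^G_{1,1}\cong S^2\times S^3$ from \eqref{eq:tubetrivial}. In the Mayer--Vietoris sequence
\[
H^*(M)\to H^*(U)\oplus H^*(V)\to H^*(W),
\]
the degree-two map $\Z\oplus\Z^2\to\Z$ is onto, because already $H^2(V)\to H^2(W)$ is onto. Hence $H^3(M)=0$ provided $H^3(U)\oplus H^3(V)=0\to H^3(W)$ is handled, which it is since $H^3(W)\cong\Z$ maps isomorphically onto $H^4(M)$-related terms. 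More precisely, the relevant maps are:
\begin{itemize}
\item $H^2(M)$ is the kernel of $\Z^3\to\Z$, hence free of rank two;
\item $H^3(W)\to H^4(M)$ is injective;
\item $H^4(M)\to H^4(U)\oplus H^4(V)=0\oplus\Z$.
\end{itemize}
So $H^4(M)$ is an extension of a subgroup of $\Z$ by $\Z$, hence torsion-free, and is of rank two by Poincar\'e duality. Thus all homology is torsion-free, $b_2=b_4=2$, and odd cohomology vanishes; rational ellipticity follows from Corollary~\ref{cor:globalelliptic}, or directly from \cite{DGGK} since $b_2=2\le3$.

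The kernel description shows that restriction $H^2(M)\to H^2(F)$ is an isomorphism. Indeed, $H^2(M)$ consists of pairs $(s,\alpha)$ with $s\,\iota_U=\alpha|_W$, and the $U$-component is determined by $\alpha$. We therefore define $x,y$ by $x|_F=a$ and $y|_F=b$, after orienting the $U$ generator compatibly.

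\textbf{Step 2: the cubic form and characteristic classes.} We use the normal-bundle formula. Since $F$ is the zero section of $V$, for classes $z_i$ we have
\[
\int_M z_1z_2z_3\,\PD[F]=\int_F z_1z_2z_3|_F.
\]
This determines only part of the cubic, so we will supplement it with Proposition~\ref{prop:localization}, adapted to the ball. We regard $\SO(3)\supset T$ and localize to the $T$-fixed set. This consists of the two poles of the interior sphere, with weights $2,2,2$ since $d_G=2$ and $(p,q)=(1,1)$, together with the $T$-fixed part of $F$, namely the two fibers $\{\text{pole}\}\times S^2_b$, whose normal weights are $2$ (tangent to $S^2_o$) and $2$ (the $\eta$ factor). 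Evaluating $x,y$ on these pieces, that is, computing the equivariant lifts, will produce \eqref{eq:Mkcubic} and \eqref{eq:Mkp1}.

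The class $\PD[F]$ is read off from $e(\nu)=(2x+ky)|_F$, and Step 1 shows that restriction to $F$ is injective. For $w_2$, the splitting $TM|_F=TF\oplus\nu$ gives $w_2|_F=\overline{kb}=\bar b$, and injectivity of restriction modulo two then gives $w_2=\bar y$. For $p_1$ we similarly have
\[
p_1(M)|_F=e(\nu)^2=4kab,
\]
but since $H^4(M)\to H^4(F)$ is not injective (its kernel is the image of $H^3(W)$), this pins down only part of $p_1$. The remaining term has to come from localization or from pairing with $U$. We expect this bookkeeping to be the main obstacle: obtaining the exact integer values $(k^3-k)/4$, $(1-k^2)/2$, $k$, $-2$, and $3k$, $-2$, with consistent signs. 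We would check the result against $k=\pm1$, where $M_{\pm1}$ should be a recognizable model. We would also check the identity
\[
\int_M x\,y\,(2x+ky)=\int_F ab=1,
\]
together with its analogues for $x^2$ and $y^2$. These give linear relations the table must satisfy.

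\textbf{Step 3: classification.} For $|k|=|\ell|$, the substitution $(x,y)\mapsto(x,-y)$ combined with orientation reversal matches the invariants \eqref{eq:WJ}, so Wall--Jupp gives $M_k\cong M_{-k}$. Alternatively, pulling back by the conjugation map on $L$ gives this directly. For the converse we extract an invariant of the pair $(\mu,w_2)$. The class $y$ is characterized up to sign and the addition of $2H^2$ by reducing to $w_2$. We would restrict the cubic to the characteristic coset $\{z: \bar z=w_2\}$, or use the discriminant of the binary cubic, which is an $\mathrm{GL}_2(\Z)$-invariant polynomial in $k$. Since the cubic has coefficients of size about $k^3$, its discriminant is a nonconstant polynomial in $k^2$, and we would verify that it is injective on odd $|k|$. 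If injectivity fails, we would combine it with the $\mathrm{GL}_2(\Z)$-invariant formed from $p_M$ and $\mu$, for instance $\mu$ evaluated on the kernel of $p_M$.
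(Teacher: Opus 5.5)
Your plan follows the paper's own route: Mayer--Vietoris for the additive structure, restriction to $F$ for $w_2$ and $\PD[F]$, localization for the cubic and $p_1$, the binary-cubic discriminant to separate $|k|$, and \cite{DGGK} for ellipticity. Step~1 is essentially correct. Note that surjectivity of $H^2(M)\to H^2(F)$ uses that $H^2(U)\to H^2(W)$ is an isomorphism, not merely that the $U$-component is determined. You should drop the appeal to Corollary~\ref{cor:globalelliptic}: it assumes \eqref{eq:H}, which $M_k$ need not satisfy, and its case~(c) is itself deduced from this decomposition.

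The genuine gap is Step~2. The explicit values in \eqref{eq:Mkcubic} and \eqref{eq:Mkp1} are the content of the theorem, and Step~3 depends on them, yet you stop at ``will produce''. Moreover, the fixed-point data you do record are insufficient. For the fixed surfaces $F_\pm=\{\text{pole}\}\times S^2_b$, the equivariant normal Euler class is not determined by the weights alone. The summand $\eta\boxtimes L^k$ restricts to $F_\pm$ with nonequivariant Chern class $kb$, so with $u$ the rotation generator $e(\nu F_\pm)=u(u\pm kb)$. Without the $kb$ term, the surface contribution $-k/4$ to $\int x^3$ disappears, and $\int x^3$ would come out as the nonintegral value $k^3/4$.

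The other missing input is the restriction of the canonical lifts to the interior poles. You can get it from the kernel description you already wrote. Since $2a+kb=0$ in $H^2(W)\cong\Z$, a generator $\sigma$ of $H^2(U)$ can be chosen with $x|_U=k\sigma$ and $y|_U=-2\sigma$. This gives
\[
 x|_{q_\pm}=\pm\tfrac{k}{2}u,\quad y|_{q_\pm}=\mp u,\qquad
 x|_{F_\pm}=\pm\tfrac12u,\quad y|_{F_\pm}=b .
\]
The relative sign of the point Euler classes $\pm u^3$ must then be fixed, for example by requiring $\int_Mu^2y=0$: the surfaces contribute $2$ and the points $-2$. The equivariant $p_1$ restricts to $2u^2\pm2kub$ on $F_\pm$ and to $3u^2$ at $q_\pm$. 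With these data, the localization sums give exactly \eqref{eq:Mkcubic}--\eqref{eq:Mkp1}; this is the paper's computation.

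Once this is in hand, Step~3 needs no fallback. The discriminant is $-27(k^2-1)$, which is invariant under $\mathrm{GL}_2(\Z)$ and orientation reversal and strictly monotone in $|k|$. Your substitution $(x,y)\mapsto(x,-y)$ with reversed orientation does match the Wall--Jupp data of $M_k$ and $M_{-k}$. The paper instead reverses the orientation of $S^2_b$ and absorbs the resulting gluing change by Lemma~\ref{lem:extension}.
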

\begin{proof}
The boundary is $S^2\times S^3$. In Mayer--Vietoris, the map $H^2(U)\to H^2(W_k)$ is an isomorphism, so restriction $H^2(M_k)\to H^2(F)$ is an isomorphism. The remaining terms give $H^3(M_k)=0$ and a short exact sequence
\[
 0\longrightarrow\Z\longrightarrow H^4(M_k)\longrightarrow\Z\longrightarrow0.
\]
Poincar\'e duality completes the additive computation. The restriction of $TM_k$ to $F$ and the normal Euler class give $w_2=\bar y$ and $\PD[F]=2x+ky$.

The full cubic requires more than restriction to $F$, since $H^4(M_k)\to H^4(F)$ is not injective. We compute it by the localization theorem \cite{AB,BV}. Let $u$ generate $H^2(BT;\Q)$ for the intrinsic rotation circle in $\SO(3)$. The fixed set consists of two copies $F_+,F_-$ of $S^2_b$ over the boundary and two points $q_+,q_-$ on the interior orbit. For the canonical rational $G$-equivariant lifts, the restrictions are
\begin{center}
\begin{tabular}{@{}ccccc@{}}
\toprule
 & $F_+$ & $F_-$ & $q_+$ & $q_-$\\
\midrule
$x$ & $u/2$ & $-u/2$ & $ku/2$ & $-ku/2$\\
$y$ & $b$ & $b$ & $-u$ & $u$\\
Euler class & $u(u+kb)$ & $u(u-kb)$ & $u^3$ & $-u^3$\\
\bottomrule
\end{tabular}
\end{center}
The point restrictions follow from the boundary relation $2a+kb=0$ in $H^2(W_k)$ and the fact that the two Weyl-related points have opposite restrictions. The point Euler sign is fixed by integrating the degree-four class $u^2y$: its total integral is zero. Thus these signs do not require an arbitrary extra orientation choice.

For clarity, the contributions from the two surfaces and from the two points, respectively, are
\begin{center}
\begin{tabular}{@{}crr@{}}
\toprule
Class & Surfaces & Points\\
\midrule
$x^3$ & $-k/4$ & $k^3/4$\\
$x^2y$ & $1/2$ & $-k^2/2$\\
$xy^2$ & $0$ & $k$\\
$y^3$ & $0$ & $-2$\\
\bottomrule
\end{tabular}
\end{center}
Their sums give \eqref{eq:Mkcubic}. At $F_\pm$ the equivariant Pontryagin class is $2u^2\pm2kub$, and at $q_\pm$ it is $3u^2$. The surface contributions to $p_1x,p_1y$ are $0,4$, while the point contributions are $3k,-6$. This proves \eqref{eq:Mkp1}. The formulas also verify directly
\[
 \int_M(2x+ky)x^2=0,\quad
 \int_M(2x+ky)xy=1,\quad
 \int_M(2x+ky)y^2=0,
\]
which are the restrictions to $F$.

The discriminant of the binary cubic $\int_M(sx+ty)^3$ is
\begin{equation}\label{eq:Mkdisc}
 \Delta_k=-27(k^2-1).
\end{equation}
It is unchanged by an integral change of basis or reversal of the ambient orientation. Hence a diffeomorphism forces $|k|=|\ell|$. Conversely, reversing the orientation of $S^2_b$ identifies the normal data with $k$ replaced by $-k$; the corresponding boundary change is absorbed by Lemma~\ref{lem:extension}. This gives $M_k\cong M_{-k}$. Equivalently, the full systems \eqref{eq:Mkcubic}--\eqref{eq:Mkp1} can be compared by Wall--Jupp. Finally \eqref{eq:Mkdef} is a double disk-bundle decomposition with $b_2=2$, so \cite[Theorem A]{DGGK} proves rational ellipticity.
\end{proof}

For $|k|=1$, the cubic has a double root and $M_k$ has the real homotopy type of $S^2\times\CP^2$. For $|k|>1$, its discriminant is negative and its real homotopy type is $\CP^3\#\CP^3$. The nonzero class $w_2=\bar y$ excludes every $M_k$ from the spin models $S^6$, $\CP^3$, $S^2\times S^4$, $\Wflag$, and $\Eflag$. This is not an obstruction to positive curvature by itself.

An explicit representative is
\begin{equation}\label{eq:M1}
 M_1\cong\mathbb P(L^2\oplus\one\oplus\one)\longrightarrow S^2.
\end{equation}
It is induced from the circle action rotating the first homogeneous coordinate of $\CP^2$. The fixed projective line gives the uniform boundary and the remaining coordinate fixed point gives the interior orbit. In a suitable basis its ring is
\[
 \Z[x,y]/(x^2,y^3+2xy^2),\qquad\int xy^2=1.
\]
The quotient of a bi-invariant metric on $G$ and a Fubini--Study metric on $\CP^2$ gives invariant nonnegative curvature. It does not give strict positivity. Thus Figure~2(c) is smoothly realizable, contrary to an interpretation of the diagram as topologically impossible; its positive-curvature realizability remains a separate question.

\subsection{Doubling the quotient excludes two fixed points and an interior orbit}
\begin{lemma}\label{lem:balllinks}
At an isolated $G$-fixed point with tangent representation $\R^3\oplus\R^3$, the quotient link is the round hemisphere $S^2_+(1/2)$. At an interior orbit with slice $(1,1)$ it is $S^2(1/2)$.
\end{lemma}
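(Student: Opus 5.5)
The plan is to compute each quotient link directly from the slice representation. The quotient link at a point of $X$ is the unit normal sphere of the slice modulo the isotropy group, taken with the quotient (orbital) metric. The two cases are handled separately.

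\emph{The fixed point.} The isotropy is all of $G=\SO(3)$. The unit normal sphere is $S^5\subset\R^3\oplus\R^3$, which we identify with pairs $(v,w)$ of vectors in $\R^3$ satisfying $|v|^2+|w|^2=1$. The invariants of the diagonal $\SO(3)$ action are the entries of the Gram matrix, $|v|^2$, $|w|^2$ and $\ang{v,w}$. The orbit space is therefore the set of positive semidefinite symmetric $2\times2$ matrices of trace one, which is a closed disk.

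To get the metric, we write such a pair as a $3\times2$ matrix $A$ with $\operatorname{tr}A^TA=1$. This exhibits $S^5$ as the unit sphere in $\R^3\otimes\R^2$. The residual $\OO(2)$ action on the right lets us compare with the classical picture. The map $A\mapsto A^TA-\tfrac12\mathrm{I}$ sends $S^5/\SO(3)$ onto a ball in the space of traceless symmetric $2\times2$ matrices. Horizontal directions can be brought to diagonal form $A=\diag(\cos\theta,\sin\theta)$ padded by a zero row. Along these, a horizontal curve traces the image point $\tfrac12(\cos2\theta,\sin2\theta)$-type coordinates. Consequently the quotient metric is that of a hemisphere of radius $1/2$.

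Concretely, the plan is as follows.
\begin{enumerate}
\item Show that the map $(v,w)\mapsto(|v|^2-|w|^2,\,2\ang{v,w},\,2|v\times w|)\in S^2(1)$ is a Riemannian submersion onto the closed upper hemisphere once the target is scaled by $1/2$. This is the real analogue of the Hopf map $S^3\to S^2(1/2)$. The third coordinate is nonnegative because $|v\times w|\ge0$, and the sum of squares is $1$ by Lagrange's identity.
\item Check the metric on horizontal vectors. It suffices to do this at generic points, where $v,w$ are linearly independent. There the orbit is three-dimensional and the horizontal space is two-dimensional. One can compute, for example at $v=\cos\theta\,e_1$, $w=\sin\theta\,e_2$, that the differential doubles lengths.
\end{enumerate}
The boundary of the hemisphere corresponds to parallel pairs, where the isotropy is $\SO(2)$. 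This is consistent with the boundary face.

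\emph{The interior orbit with slice $(1,1)$.} The isotropy is $T$, acting on $\C_1\oplus\C_1$ with weights $(1,1)$. Its unit sphere $S^3$ is acted on by the Hopf circle. The quotient is the round $S^2(1/2)$ by the standard Hopf fibration, as recalled in Section~\ref{sec:tools}.

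One point to justify is that the slice metric may be taken round. In the Alexandrov quotient, the space of directions is the quotient of the unit normal sphere with its Euclidean metric. This holds because the slice is an orthogonal representation and the normal sphere carries the induced round metric. Only $T$ acts here, not its normalizer, so no further folding occurs. No sign ambiguity arises from using weights $(1,-1)$ instead, because complex conjugation on one factor is an isometry intertwining the two actions.

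The main obstacle is step (i) for the fixed point, specifically verifying the factor $1/2$ on horizontal vectors rather than merely identifying the quotient topologically as a disk. I would do this by an explicit computation along the curve $\theta\mapsto(\cos\theta\,e_1,\sin\theta\,e_2)$. This curve is horizontal, since it is orthogonal to the $\mathfrak{so}(3)$ orbit directions, and it has length $\pi/2$ from $\theta=0$ to $\theta=\pi/2$. Its image traverses a half great circle of $S^2(1/2)$, whose length is also $\pi/2$. I would complete the verification using the transverse horizontal direction at the same point.
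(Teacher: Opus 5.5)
Your proposal is correct and is essentially the paper's argument. The paper uses the same singular-value normal form, written as $z=e^{i\theta}(\cos t\,e_1+i\sin t\,e_2)$, and reads off the horizontal metric $dt^2+\cos^2(2t)\,d\theta^2=\tfrac14(d\rho^2+\sin^2\rho\,d\phi^2)$ directly, where you route the same computation through the map $(v,w)\mapsto(|v|^2-|w|^2,2\ang{v,w},2|v\times w|)$; your check goes through, since $(-\sin\theta\,e_1,\cos\theta\,e_2)$ and $(\sin\theta\,e_2,\cos\theta\,e_1)$ map to orthogonal vectors of length $2$. The one step you should state explicitly at the fixed point, as the paper does, is that every $\SO(3)$-invariant inner product on $\R^3\oplus\R^3$ is equivariantly isometric to the standard one, so computing with the standard metric loses no generality; you only address this reduction in the $(1,1)$ case.
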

\begin{proof}
Every invariant inner product on $\R^3\oplus\R^3$ is carried to the standard inner product by an equivariant linear isometry. For the standard one write a representative of $S^5/\SO(3)$ as
\[
 z=e^{i\theta}(\cos t\,e_1+i\sin t\,e_2),\qquad
 0\le t\le\pi/4,\quad\theta\in\R/\pi\Z.
\]
Projecting the coordinate derivatives perpendicular to the orbits gives
\[
 dt^2+\cos^2(2t)\,d\theta^2
 =\tfrac14(d\rho^2+\sin^2\rho\,d\phi^2),
 \qquad\rho=\pi/2-2t,\quad\phi=2\theta.
\]
The dependent-vector locus is the equator. The second assertion is the ordinary unit Hopf quotient; an invariant Hermitian form on the weight-one $\C^2$ slice is likewise equivalent to the standard form.
\end{proof}

\begin{proposition}\label{prop:doublebound}
Let $f$ count the boundary fixed points of the preceding type and let $r$ count interior singular points. Then $f+2r\le3$. In particular case~(e) cannot occur under \eqref{eq:H}.
\end{proposition}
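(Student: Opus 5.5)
We double the quotient along its entire boundary and then apply the extent inequality to the resulting positively curved Alexandrov space. By the Alexandrov-space input recalled in Section~\ref{sec:tools}, $X=M/G$ has curvature bounded below by some $\kappa>0$. By Petrunin's doubling theorem, the double $DX=X\cup_{\partial X}X$ also has curvature at least $\kappa$. Since $X\cong B^3$, the double is a three-sphere, and it is compact.

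Next we identify the spaces of directions in $DX$ at the relevant points, using Lemma~\ref{lem:balllinks}.

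At a boundary fixed point, the link in $X$ is the round hemisphere $S^2_+(1/2)$. Its double along the boundary equator is the round sphere $S^2(1/2)$, so the point contributes one point of $DX$ with link $S^2(1/2)$.

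Each interior singular orbit with slice $(1,1)$ has link $S^2(1/2)$ in $X$. It appears twice in $DX$, once in each copy of $X$, and each copy keeps that link.

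Altogether we obtain $f+2r$ distinct points of $DX$, each with space of directions isometric to $S^2(1/2)$. The configuration in Lemma~\ref{lem:noextra} and Table~\ref{tab:ball} shows that the interior points are distinct from the boundary ones.

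Now apply the extent inequality. The space $S^2(1/2)$ has diameter $\pi/2$. Its three-extent is the maximal average pairwise distance of three points, which is attained by three equally spaced points on a great circle. Each such pair is at distance $\frac{2\pi}{3}\cdot\frac12=\pi/3$, so the three-extent equals $\pi/3$. Section~\ref{sec:tools} records that the round Hopf quotient $S^2(1/2)$ satisfies this bound. Since no four points of a positively curved Alexandrov space can all have links of three-extent at most $\pi/3$, we conclude $f+2r\le3$.

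In case~(e) we have $f=2$ and $r=1$, so $f+2r=4$. This contradicts the inequality, and case~(e) cannot occur under \eqref{eq:H}.

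The main obstacle is justifying the link computation at the doubled boundary fixed point. The new space of directions is the double of the old link, and this double must be the round $S^2(1/2)$ as a metric space, not merely a space of curvature at least one. This holds because the hemisphere's boundary is a great circle, so doubling it gives the round sphere isometrically. One must also verify that the Grove--Markvorsen extent inequality applies in the non-strict form ``$\le\pi/3$'' used in Section~\ref{sec:tools}. The paper states it in exactly that form, so we cite it as stated. We would also briefly note that $DX$ has curvature bounded below by a positive constant and not merely $\ge0$, which follows from compactness of $M$.
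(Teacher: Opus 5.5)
This is correct and is the paper's own argument: double $X$ using Petrunin's theorem, read off the links from Lemma~\ref{lem:balllinks}, and apply the four-point extent inequality to the resulting $f+2r$ points. One refinement is needed for the general inequality. An interior singular point need not have slice $(1,1)$ (in case~(f) it is $(1,2)$), so, as the paper does, you should use the three-extent bound $\le\pi/3$ for arbitrary weighted Hopf quotients recalled in Section~\ref{sec:tools}, not an isometry with $S^2(1/2)$; for case~(e) itself Lemma~\ref{lem:noextra} gives slice $(1,1)$, so your argument is complete there.
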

\begin{proof}
Compactness gives $\Sec_g\ge\kappa>0$. Double $X$ along its whole boundary. Petrunin's theorem \cite{Petrunin} gives curvature at least $\kappa$ on $DX$. Each boundary fixed point contributes one point whose link is the double of $S^2_+(1/2)$, namely $S^2(1/2)$. Each interior singular point contributes two points. Their weighted-Hopf links have three-extent at most $\pi/3$, as recalled in Section~\ref{sec:tools}. The four-point extent inequality therefore gives $f+2r\le3$. In~(e), $f=2$ and $r=1$, a contradiction.
\end{proof}
This proof doubles the metric quotient, not the six-manifold across its codimension-two singular set. It also shows that in the finite-fixed-set ball case the existence of two boundary fixed points automatically excludes an interior singular orbit. Lemma~\ref{lem:noextra} then excludes exceptional orbits, and Liu's Theorem~4.4 applies. Its equivariant homeomorphism to the linear six-sphere, together with $\Theta_6=0$, gives the ordinary diffeomorphism assertion for~(d).

The excluded diagram is nevertheless smoothly realizable. Consider
\begin{equation}\label{eq:quadric}
 Q^3=\{[z_1:\cdots:z_5]\in\CP^4:\ z_1^2+\cdots+z_5^2=0\},
\end{equation}
with $\SO(3)$ rotating the first three coordinates. This quadric is the oriented Grassmannian $\SO(5)/(\SO(2)\times\SO(3))$ and is simply connected by the homogeneous-space homotopy sequence. Its fixed points are $[0:0:0:1:i]$ and $[0:0:0:1:-i]$. At $z_4=z_5=0$ the conic is a single circle-isotropy orbit with normal weights $(1,1)$. When $(z_4,z_5)\ne0$, projective isotropy fixes the first three coordinates as a complex vector and is trivial or a circle; there are no exceptional orbits.

To identify the quotient, normalize a vector as $(v,w)\in\C^3\oplus\C^2$. The Gram matrix of the real and imaginary parts of $v$ determines its $\SO(3)$ orbit. The conditions
\[
 \|v\|^2=1-\|w\|^2,\qquad v\cdot v=-w\cdot w
\]
are realizable precisely when $\|w\|^2+|w\cdot w|\le1$. This is an $S^1$-invariant star-shaped four-ball in $\C^2$. Radial rescaling and the scalar circle quotient identify it with $D^4/S^1\cong B^3$. The origin is the interior conic orbit and the boundary has exactly the two fixed points above. Thus Proposition~\ref{prop:doublebound} is an obstruction to positive curvature invariant under this action on $Q^3$, not an assertion that the smooth diagram is impossible.

\subsection{The remaining exceptional ball configuration is $\CP^3$}
Let $I$ be the closed $C_2$ interval in~(f), including its boundary normalizer endpoint and its interior circle endpoint. Put $Q=\pi^{-1}(I)$.

\begin{lemma}\label{lem:conicpiece}
The inverse image $Q$ is a smooth embedded four-manifold equivariantly diffeomorphic to $\CP^2$ with its real $\SO(3)$ action. The interior singular sphere is its complex conic. The oriented normal line has Euler class $\pm h$, where $h$ generates $H^2(\CP^2;\Z)$.
\end{lemma}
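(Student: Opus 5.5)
The plan is to treat $Q$ as a cohomogeneity-one $G$-manifold over the closed interval $I$ and recognize it from its group diagram. Along the open interval the isotropy is $C_2$. At the interior circle endpoint the slice is $(1,2)$ by Lemma~\ref{lem:noextra}. Its $C_2$-fixed subspace is the weight-two plane $\C_2$, on which $T$ acts with ineffective kernel $C_2$. So near that endpoint $Q$ is $G\times_T D(\C_2)$, a smooth disk bundle over $G/T$, and the singular isotropy of the cohomogeneity-one structure there is $T\supset C_2$. At the boundary normalizer endpoint the slice is \eqref{eq:O2slice}. The fixed set of the reflection $\sigma\in\OO(2)\setminus T$ generating the exceptional $C_2$ is a two-dimensional subspace (one $\R_{\det}$ line and one line of $\C_{\mathrm{std}}$, or the two $\R_{\det}$ directions, depending on the sign convention). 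Its normalizer in $\OO(2)$ acts on this plane. I will check that the local piece of $Q$ is $G\times_{\OO(2)}(\text{fixed line})$ thickened, so that the isotropy there is $\OO(2)$, giving the diagram $C_2\subset T,\ \OO(2)$.

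Smoothness and embeddedness come from the slice theorem. In each slice, $\pi^{-1}(I)$ is locally the fixed-point set of the isotropy subgroup $C_2$, intersected with the orbit-type stratum. At the endpoints it is $G\times_H(\text{linear subspace})$, and a fixed set of a compact group action is a smooth submanifold. The main obstacle is the normalizer endpoint. There I must verify from \eqref{eq:O2slice} that the local $C_2$-fixed locus is exactly a line (a one-dimensional slice for the $\OO(2)$ orbit $\RP^2$ in $Q$), so that $Q$ has codimension two there as well. I also need that the orbit $G/\OO(2)=\RP^2$ sits in $Q$ as a singular orbit of codimension two, so the normal sphere is $S^1=\OO(2)/C_2$. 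I would compute this directly from how the reflection acts on $\R_{\det}\oplus\R_{\det}\oplus\C_{\mathrm{std}}$.

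Next I compare with the model. The real $\SO(3)$ action on $\CP^2$ has orbits $\RP^2$ (isotropy $\OO(2)$), the conic $Q_0=\{z\cdot z=0\}$ (isotropy $T$), and principal isotropy $D_2$ or $C_2$. For $\SO(3)$ acting on $\CP^2$ the principal isotropy is $\Z_2\times\Z_2$ in the real-projective sense, and I must recheck which one corresponds to $C_2$ after accounting for the diagonal choice. By the Grove--Ziller or Mostert classification of cohomogeneity-one diagrams, a diagram $H\subset K_\pm$ determines the manifold up to equivariant diffeomorphism once the embeddings are fixed up to conjugation in $N(H)_0$. Here the embeddings $C_2\subset T$ and $C_2\subset\OO(2)$ are rigid up to conjugacy, so $Q$ is equivariantly $\CP^2$ and the circle-isotropy sphere is the conic.

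For the Euler class of the normal line, I apply Lemma~\ref{lem:normal}-style character bookkeeping in this nonstandard diagram. The normal plane at the conic has character $1$, the remaining slice weight. At a principal orbit $G/C_2$ the normal character must be odd. An alternative is to compute self-intersections: $H^2(\CP^2)=\Z h$ and the conic represents $2h$. The evaluation of $e(\nu)$ on the conic equals $d_G\cdot u=2u$ with $u=\pm1$, so $\langle e(\nu),[Q_0]\rangle=\pm2$. Since $[Q_0]=2[\CP^1]$, this forces $e(\nu)=\pm h$.
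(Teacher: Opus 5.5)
Your route is the paper's. You view $Q$ as a cohomogeneity-one manifold with diagram $H=C_2\subset K_-,K_+$, recognize it as the real $\SO(3)$ action on $\CP^2$ through the cohomogeneity-one classification, and get $e(\nu Q)=\pm h$ from $2d=\pm2$ on the conic. Your conic computation is exactly the paper's; the parity remark before it does not by itself determine $d$.

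Several intermediate claims are wrong, though, and must be corrected before the diagram comparison goes through.

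First, a reflection $\sigma\in\OO(2)$ has $\det\sigma=-1$, so it acts by $-1$ on both $\R_{\det}$ summands of \eqref{eq:O2slice}.
\begin{itemize}
\item Its fixed set in the slice is therefore only the line $\ell_\sigma\subset\C_{\mathrm{std}}$, not a plane.
\item The exceptional points of the slice are $0\oplus0\oplus(\C_{\mathrm{std}}\setminus0)$.
\item $Q$ meets the slice in the whole plane $\C_{\mathrm{std}}=\OO(2)\cdot\ell_\sigma$, so locally $Q\cong G\times_{\OO(2)}D(\C_{\mathrm{std}})$, with normal circle $\OO(2)/\langle\sigma\rangle$.
\item The $\R_{\det}$ directions lead into the $T$-faces, not into $Q$.
\end{itemize}

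Second, the principal isotropy of the real action on $\CP^2$ is $C_2$, not $\Z_2\times\Z_2$. It is generated by the half-turn about the normal to the span of $\mathrm{Re}\,z$ and $\mathrm{Im}\,z$. $\Z_2\times\Z_2$ is impossible anyway, because principal isotropy must lie in the circle isotropy of the conic.

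Third, $C_2\subset\OO(2)$ is not rigid up to conjugacy: $\OO(2)$ has two classes of order-two subgroups. The slice forces $H$ to be generated by a reflection. Consequently the circle you call $T$ at the interior end is not the identity component of the boundary $\OO(2)$.
\begin{itemize}
\item With $H$ the half-turn about the $z$-axis, $K_-=\SO(2)_z$ and $K_+=\OO(2)_x$ with $x\perp z$. This is precisely the relative position the paper emphasizes.
\item Different choices of $x$ are conjugate under $N(H)_0=\SO(2)_z$, which fixes $K_-$. That, not abstract rigidity, is what makes the diagram unique.
\item The diagram matches the model along the normal geodesic $[\cos t\,e_1+i\sin t\,e_2]$, $0\le t\le\pi/4$, which runs from the real point $[e_1]$ to the conic point $[e_1+ie_2]$.
\end{itemize}
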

\begin{proof}
At the interior endpoint, $Q$ is the weight-two axis plane in the slice $(1,2)$. At the boundary endpoint, it is the standard plane in \eqref{eq:O2slice}. It therefore closes smoothly. Align the principal subgroup as $H=C_2$ about the $z$ axis. At the interior end the singular group is $\SO(2)_z$. At the boundary end it is $\OO(2)_x$, with $x\perp z$: the generator of $H$ lies in the nonidentity component of this normalizer. This relative position, which is not expressed by the abstract symbols $C_2\subset\OO(2)$ alone, is forced by \eqref{eq:O2slice}.

The resulting cohomogeneity-one diagram is the real $\SO(3)$ action on $\CP^2$, with singular orbits $\RP^2$ and the complex conic. The standard cohomogeneity-one classification \cite{Parker,Bredon} gives the equivariant identification. The conic represents twice the projective-line class. Along it the ambient normal plane has circle weight one, and hence Euler number $\pm2$. If $e(\nu Q)=d h$, restriction to the conic gives $2d=\pm2$, so $d=\pm1$.
\end{proof}

\begin{lemma}[The complementary cap]\label{lem:cap}
The complement of an open normal disk bundle of $Q$ is equivariantly diffeomorphic to $D(\R^3\oplus\R^3)$.
\end{lemma}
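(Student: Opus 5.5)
The plan is to recognize the complement as a compact $G$-manifold whose orbit space is a three-ball containing exactly one non-free orbit type: the isolated boundary fixed point. All other points should be principal. The model $D(\R^3\oplus\R^3)$ has exactly this shape. Its orbit space is the cone on $S^5/\SO(3)=S^2_+(1/2)$, by Lemma~\ref{lem:balllinks}, so it is a three-ball with the fixed point at the cone vertex on the boundary.

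\textbf{Step 1: identify the strata.} Let $C=M\setminus \mathrm{int}\,D(\nu Q)$. By Lemma~\ref{lem:noextra} and Table~\ref{tab:ball}(f), the only nonprincipal strata outside a neighborhood of $\pi(Q)=I$ are the following:
\begin{enumerate}
\item the boundary faces with $T$ isotropy;
\item the isolated fixed point;
\item the $\OO(2)$ orbit, which lies on $I$.
\end{enumerate}
The $T$-faces must be examined first. The $\OO(2)$ endpoint and the interior $T$ endpoint lie in $Q$. So I will argue, from the local slice \eqref{eq:O2slice} and the $(1,2)$ slice, that the boundary face in~(f) is a disk. This disk is bounded by the fixed point and the normalizer point, and it lies adjacent to $I$ only at its $\OO(2)$ endpoint.

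\textbf{Step 2: reduce to a cone.} After removing the tube around $Q$, the quotient $C/G$ is a three-ball. Its boundary splits into the image of $\partial D(\nu Q)$, which is the free part, and the remaining part of $\partial X$. I will show that this remaining boundary deformation retracts, equivariantly, onto a collar neighborhood of the fixed point $x_0$.

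The tool is a radial gradient-like vector field for the distance from $x_0$. This is available because the slice at $x_0$ is linear, so a small ball $D_\varepsilon(\R^3\oplus\R^3)$ is an invariant neighborhood. I will then check that the region $C\setminus D_\varepsilon$ is an equivariant product of $\partial D_\varepsilon=S(\R^3\oplus\R^3)$ with an interval. To do this, compare the orbit types on both ends: each is the $T$-face along a hemisphere equator plus a free interior. Then lift a radial quotient vector field by an invariant connection, exactly as in the collar arguments of Section~\ref{sec:so3cyclic}.

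\textbf{Step 3: handle the stratum-preserving condition.} The quotient vector field must be tangent to the $T$-boundary face and transverse to spheres about $x_0$. It has to be built first on the face, which is a two-disk, and then extended over the free interior. The extension is by orbifold-free smoothing, since principal isotropy is trivial.

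\textbf{Expected main obstacle.} The main obstacle is showing that $C/G$ really is a product cone away from $x_0$. The face portion of $\partial C/G$ must be a disk meeting the free boundary circle correctly, so that Schoenflies-type arguments in dimension two and three apply. Equivalently, the boundary of $C$ must be the equivariant sphere $S(\R^3\oplus\R^3)$ with its standard structure.

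To settle this, I would first compute $\partial C=S(\nu Q)$ as a $G$-manifold. Because $e(\nu Q)=\pm h$ by Lemma~\ref{lem:conicpiece}, this is the Hopf-type circle bundle over $\CP^2$, so $\partial C\cong S^5$. Its $G$-orbit structure then matches $S(\R^3\oplus\R^3)$, since real $\SO(3)$ acts on $\C^3$ and its projectivization is the real action on $\CP^2$. Finally, apply the cohomogeneity-two uniqueness of that linear sphere (via Lemma~\ref{lem:extension}-type gauge arguments) to identify the collar with $D(\R^3\oplus\R^3)$.
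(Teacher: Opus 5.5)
Your Steps~2--3 follow the paper's argument. You excise a small linear disk $D_\varepsilon(\R^3\oplus\R^3)$ about the fixed point $x_0$ and show that the remaining quotient region $R$ is $D^2\times[0,1]$ with the $T$-face as its side. You then lift the interval field, tangent to the face in charts $\R^2\oplus\C_1$ and horizontal via a connection on the free part, so that its flow is an equivariant product cobordism.

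You differ from the paper only in how the product structure of $R$ is obtained. The paper reads it off from the boundary-soul description of $X$, choosing the tube about $Q$ compatibly with it. Your Schoenflies suggestion also works and avoids the soul theorem. Put $\Phi=\pi(S(\nu Q))$ and $\Psi=\pi(\partial D_\varepsilon)$. These are smooth properly embedded disks in the free-plus-face part of $X\cong B^3$. The relative three-dimensional Schoenflies theorem then makes $R$ a ball with two disjoint disks on its boundary sphere, hence $D^2\times[0,1]$ after coning. Uniqueness of smoothings in dimension three upgrades this to the smooth category.

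Several details need correction, and the concrete plan in your last paragraph would not settle the obstacle you identify.
\begin{enumerate}
\item The $T$-face is not a disk ``bounded by the fixed point and the normalizer point.'' It is $\partial X\setminus\{x_0,o\}$, an open annulus. Both the link $S^2_+(1/2)$ at $x_0$ and the slice \eqref{eq:O2slice} at $o$ place face points all around those points. Inside $R$ the face is the side annulus joining the equator of the hemisphere link to $\partial\Phi$.
\item $\Phi$ is not entirely free. Its boundary circle consists of $T$-orbits coming from the $\R_{\det}\oplus\R_{\det}$ normal directions to $Q$ at the $\OO(2)$ orbit. That corner is exactly where the lifted field must be built in a face chart.
\item Identifying $\partial C\cong_G S(\R^3\oplus\R^3)$ via the Hopf bundle over $\CP^2$ is not ``equivalent'' to the product structure. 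A boundary identification says nothing about the interior of $C$. Once the product cobordism exists, its flow already identifies $\partial C$ with $\partial D_\varepsilon$ equivariantly. So that computation, and the appeal to Lemma~\ref{lem:extension}-type gauge arguments, can be dropped.
\item Matching orbit types at the two ends does not by itself produce a product. Neither does a gradient-like field for the actual distance from $x_0$, which may have critical points in $R$ for a general invariant metric. What produces the product is the topology of $R$, as above.
\end{enumerate}
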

\begin{proof}
Choose the normal tube compatibly with the boundary-soul product description. Its quotient removes a neighborhood of the exceptional interval and leaves a ball with exactly one fixed point and a circle-isotropy side face. Remove a small linear disk about that fixed point. The remaining quotient is a product $D^2\times[0,1]$ with its side-face structure, and there are no other orbit types in it. The interval vector field lifts to a smooth invariant vector field on the corresponding $G$-manifold: on a face chart $\R^2\oplus\C_1$ choose the lift tangent to the face and use the squared normal radius as quotient normal coordinate. The field has no normal component along that face. On principal charts use a connection, and patch these lifts with an invariant partition of unity. Its flow gives an equivariant product cobordism. Thus the outer boundary is obtained from the boundary of the small linear fixed-point disk only by a collar. The small disk has representation $\R^3\oplus\R^3$, proving the assertion. This is the smooth equivariant collar argument, not an assumption that an arbitrary topological cap has a standard smoothing.
\end{proof}

It follows that
\[
 M\cong D(\mathcal O(1)\to\CP^2)\cup_{S^5}D^6.
\]
The negative Euler choice gives the same unoriented conclusion. The standard attachment is $\CP^3$, decomposed as a tubular neighborhood of a hyperplane and the complementary disk. Changing the boundary attachment can alter this ordinary manifold only by a homotopy six-sphere, which is trivial by \cite{KM}. Hence $M\cong\CP^3$.

The action
\[
 A[z_1:z_2:z_3:z_4]=[A(z_1,z_2,z_3)^t:z_4],\qquad A\in\SO(3),
\]
is Liu's Example~5.3(3) and realizes exactly these strata with the positively curved Fubini--Study metric. The new conclusion is the uniqueness of the underlying smooth manifold, not the existence of that example. This completes Theorem~\ref{thm:mainball}.

% ===== 05_su2_s3.tex =====
\section{\texorpdfstring{$\SU(2)$ actions with orbit space $S^3$}{SU(2) actions with orbit space S3}}\label{sec:su2s3}

Assume $G=\SU(2)$, $X\cong S^3$, and \eqref{eq:H}. By Liu's Proposition~4.9, the singular orbits are $G/T$ spheres, their number is $1\le r\le3$, and their union is exactly $M^{\{\pm1\}}$. They are therefore totally geodesic. Every exceptional isotropy group is cyclic of odd order. At a singular orbit, the normal representation has a pair of coprime nonzero odd weights $(p,q)$, because the central element acts as $-\id$ on the whole normal slice. These are existing results from \cite{LiuCorrected}; the additional curvature restriction is the following theorem.

\subsection{Adjoint-weight matching}
\begin{theorem}\label{thm:matching}
Suppose an open exceptional edge has isotropy $C_k$, $k>1$ odd, and transverse slice $\R\oplus\C_s$. Then
\begin{equation}\label{eq:matching}
 s\equiv\pm2\pmod k.
\end{equation}
Consequently, an exceptional edge incident to a one-valent singular vertex has order three.
\end{theorem}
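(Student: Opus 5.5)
The plan is to follow the exceptional edge into a singular sphere and compare two circle representations on the same normal space there. Let $e$ be the open $C_k$ edge and let $S\cong G/T$ be a singular sphere at which the closure of $e$ ends; by Liu, the closure of each exceptional component meets a singular stratum. At a $T$-fixed point $x\in S$ with slice $\C_p\oplus\C_q$, the exceptional germ is one of the two axis planes, say $\C_p$, and then $k=p$. The transverse representation of $C_k$ along $e$ is the restriction of the complementary slice summand, so $s\equiv\pm q\pmod k$ by the argument of Lemma~\ref{lem:edgecong}. It therefore suffices to show that $q\equiv\pm2\pmod p$ whenever $p>1$ carries an exceptional edge. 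In words, the weight complementary to the exceptional axis must match, modulo $p$, the adjoint weight $2$ of $T$ on $T_xS$.

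To produce this congruence, use that $S$ is totally geodesic, being a fixed component of the central element $-1$ (Liu). Consider the geodesics $\gamma(t)=\exp_x(tv)$ with $v\in\C_p$ normal to $S$. Their images trace the exceptional germ, and the points $\gamma(t)$ for small $t>0$ have isotropy $C_p$. Along such a geodesic the Killing fields of $G$ restrict to Jacobi fields. The normal Jacobi fields that come from $\mathfrak g$ but vanish at $x$ are exactly those from $\mathfrak t$. At $\gamma(t)$ the group $C_p$ acts on the normal space of the orbit $G\gamma(t)$; this normal space is the three-dimensional slice $\R\oplus\C_s$. It also acts on the orbit tangent space through $\Ad|_{C_p}$, which has weights $0,\pm2$.

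The comparison works as follows. I will write the space of $C_p$-invariant Jacobi fields along $\gamma$ with the appropriate initial conditions in two ways. At $t=0$ they decompose by the characters $0,\pm2,\pm q$ of the tangent/normal splitting at $x$. For $t>0$ they decompose by the orbit-tangent characters $0,\pm2$ and the transverse characters $0,\pm s$. Positive curvature enters through the second variation. Suppose the character $\pm q$ is not congruent to any adjoint character $\pm2$ or $0$. Then the parallel $\C_q$-directions along $\gamma$ are $C_p$-isotypically orthogonal to every Killing field. One can then build a variation of $\gamma$, or of the orbit, of a Frankel/Synge type. The intended move is to take a parallel normal field in the $\C_q$ isotypic component. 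Being orthogonal to the orbit at every time, it would give a totally geodesic flat or a nonpositive second variation of a closed curve of orbits. The natural candidate for that curve is the geodesic of the quotient that returns to $S$ along the edge, that is, the edge's other end. Either conclusion contradicts $\Sec>0$. I expect this step to be the main obstacle: isotypic bookkeeping alone does not give a contradiction, and the curvature input has to be placed correctly. My first attempt will use the reflection-type symmetry of $\gamma$ together with the Rauch/index comparison for the $C_p$-equivariant index form. Here it matters that $S$ is totally geodesic, so that $\C_q$ is parallel along $S$.

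For the consequence, let the edge end at a one-valent singular vertex. Then its slice there is $(1,k)$, so the complementary weight is $q=1$ and $s\equiv\pm1\pmod k$. Combining this with \eqref{eq:matching} gives $\pm1\equiv\pm2\pmod k$. Hence $k\mid1$ or $k\mid3$, and because $k>1$ is odd, $k=3$.
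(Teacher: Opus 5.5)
Your reduction to $q\equiv\pm2\pmod p$ at a singular endpoint is fine, and so is your derivation of the consequence ($\pm1\equiv\pm2\pmod k$, hence $k=3$). The gap is the curvature step, which is the whole content of the theorem. The direction you sketch for it would not work.

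Parallel transport along a pointwise $C_p$-fixed geodesic does preserve isotypic components. But the component you chose, the parallel continuation of $\C_q$, is the transverse plane $\C_s$. It starts in $\nu_xS$, ends in the normal space of the singular sphere at the other end of the edge, and is orthogonal to the orbits throughout. So it contains no Killing Jacobi field. Variations along it do not keep the endpoints on the singular spheres. A Synge/Frankel second-variation argument would also need a minimizing geodesic between the two spheres, which nothing provides; Frankel's theorem itself does not apply, since $2+2<6$.

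A germ $\exp_x(tv)$ at one endpoint is also not enough. The contradiction needs boundary conditions at both ends of the closed edge. Those ends are in general two different singular spheres, not a return to $S$.

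The missing idea is to use the other nontrivial isotypic summand, the orbit-tangent one.
\begin{enumerate}
\item Take a unit-speed horizontal geodesic $\gamma\colon[0,L]\to M$ in the totally geodesic surface of $M^{C_k}$ over the edge, joining the two singular spheres.
\item If $s\not\equiv\pm2\pmod k$, the weight-two summand of $C_k$ on $T_{\gamma(t)}M$ is exactly the orbit-tangent plane $P_t=\{A^*:A\in\mathfrak m\}$. Hence $P_t$ is parallel along $\gamma$, and at the endpoints it is the tangent plane of each singular sphere.
\item Schur's lemma (using $k$ odd, $k>1$) gives $\langle J_A,J_B\rangle=f(t)^2Q_0(A,B)$ for the Killing Jacobi fields $J_A=A^*|_\gamma$.
\item The Wronskian of two orbit Jacobi fields vanishes, because $\gamma$ is horizontal. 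This kills the rotational part of $D_tJ_A$, so $J_A=fE_A$ with $E_A$ parallel.
\item The Jacobi equation then gives $K(\gamma',E_A)=-f''/f$. Positive curvature makes $f$ strictly concave on $[0,L]$.
\item Both endpoint spheres are totally geodesic, which forces $f'(0)=f'(L)=0$. This contradicts strict concavity.
\end{enumerate}
No minimality of $\gamma$ is needed. Without an argument of this kind, your proposal does not establish the congruence.
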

\begin{proof}
A component of $M^{C_k}$ over the edge is a totally geodesic surface. Its residual rotation-circle action has a horizontal meridian connecting the two singular endpoints. Choose such a unit-speed geodesic $\gamma:[0,L]\to M$, perpendicular to the $G$ orbits. Its interior lies over the open edge. The endpoint isotropy circles can be aligned with the unique maximal torus containing $C_k$.

Let $\mathfrak m=\mathfrak{su}(2)\ominus\mathfrak t$. For $A\in\mathfrak m$, write $J_A=A^*|_\gamma$ for the Killing Jacobi field. Its span $P_t$ is the adjoint weight-two plane. At an interior point, the $C_k$ representation on the tangent space is
\[
 \R^2\oplus\C_2\oplus\C_s.
\]
Suppose $s\not\equiv\pm2\pmod k$. The weight-two isotypic component is then unique, so parallel transport along the pointwise $C_k$-fixed geodesic preserves $P_t$. Let $Q_0$ be a fixed invariant inner product on $\mathfrak m$. Since $k$ is odd and greater than one, the invariant metric on this real rotation representation is a scalar multiple of $Q_0$. Therefore
\begin{equation}\label{eq:Jacobilength}
 \langle J_A,J_B\rangle=f(t)^2Q_0(A,B),\qquad f(t)>0.
\end{equation}
In the identifications $A\mapsto J_A$, the derivative operator on this plane is $a(t)\id+b(t)J$, where $J$ is its complex structure. The Wronskian of orbit Jacobi fields vanishes: the Killing identities express it as a multiple of $\langle\gamma',[A^*,B^*]\rangle$, which is zero because the bracket is orbit tangent and $\gamma'$ is horizontal. Thus $b=0$. Differentiating \eqref{eq:Jacobilength} gives $a=f'/f$, whence
\[
 D_tJ_A=(f'/f)J_A,\qquad D_t^2J_A=(f''/f)J_A.
\]
The Jacobi equation now gives, for every nonzero $A$,
\begin{equation}\label{eq:concavity}
 K(\gamma',J_A)=-f''/f>0.
\end{equation}

At each endpoint the fields $J_A$ span the noncollapsed tangent plane of the singular sphere, so $f$ extends positively and smoothly to the endpoints. That sphere is totally geodesic. The first-variation formula for its orbit metric in the normal direction $\gamma'$ yields
\[
 f'(0)=f'(L)=0.
\]
This contradicts the strict concavity in \eqref{eq:concavity}. The assumption of inequivalent characters is therefore impossible, proving \eqref{eq:matching}.

At a one-valent vertex on an order-$k$ edge the slice weights are $(1,k)$; its transverse character is one. Thus $1\equiv\pm2\pmod k$. With $k>1$ odd, this forces $k=3$.
\end{proof}
The uniqueness of the weight-two isotypic summand is essential. When $s\equiv\pm2$, the two rotation planes can mix under parallel transport, so the preceding scalar concavity argument does not apply. This is precisely the alternative retained by the theorem.

\subsection{All remaining weights}
There is no finite branching vertex, since all exceptional groups are cyclic. Every singular vertex has valency at most two. A one-vertex loop would force equal nontrivial weights and is excluded by coprimality. The remaining abstract possibilities are consequently the no-edge cases, an interval, a path, a bigon, a triangle, and an isolated vertex together with an interval or a bigon, as in the specialization of \cite[Proposition 5.1]{KSR}.

For three singular vertices, a two-edge path has order-three leaves by Theorem~\ref{thm:matching}, forcing middle weights $(3,3)$, which is impossible. In a genuine triangle order the labels $a<b<c$. Along the $c$ edge, slice compatibility gives $a\equiv\pm b\pmod c$. Hence $c=a+b$, contradicting oddness. A genuine bigon together with an isolated singular vertex is excluded by the double branched cover along its cycle: \cite[Proposition 2.14]{KSR} preserves positive Alexandrov curvature, and its small-point conclusion gives the two vertices on the cycle and two lifts of the isolated vertex. Four such points contradict the extent bound. Thus the only three-vertex patterns are three isolated vertices or one interval together with an isolated vertex. In the second case its order is three.

For two singular vertices, the unordered slice pair $(p,q)$ is the same at both endpoints, each order greater than one being recorded by its edge. If $p=q$, coprimality gives $(1,1)$. Otherwise take $1\le p<q$. The order-$q$ edge is genuine and has transverse character $p$. By \eqref{eq:matching},
\[
 p\equiv\pm2\pmod q.
\]
Since $p$ is odd and $0<p<q$, $p=q-2$. Thus the necessary list is
\begin{equation}\label{eq:SU2pairs}
 (p,q)=(1,1)\quad\text{or}\quad(p,q)=(2j-1,2j+1),\qquad j\ge1.
\end{equation}
These pairs satisfy the matching congruences on both genuine edges, but that converse arithmetic statement is not a converse curvature theorem.

\begin{table}[tbp]
\centering\small
\begin{tabularx}{\textwidth}{@{}lXX@{}}
\toprule
Figure~4 pattern & Remaining labels & Smooth action\\
\midrule
Two vertices & $(1,1)$ or $(2j-1,2j+1)$, $j\ge1$ & $N^G_{1,1}$ or $N^G_{2j-1,2j+1}$ on $S^2\times S^4$.\\
Bigon and isolated vertex & $(1,1)$ or $(1,3)$ & Block action on $\Wflag$ or commuting right block action on $\Eflag$.\\
Three-sided cycle & $(1,1,1)$ or $(1,1,3)$ & The same two flag actions after deleting labels one.\\
\bottomrule
\end{tabularx}
\caption{The label conventions of Liu's Figure~4, interpreted with order-one edges erased. The two-vertex row is a necessary positive-curvature candidate list; both three-vertex rows have the stated positively curved realizations.}\label{tab:SU2labels}
\end{table}

\subsection{Two singular orbits: reconstruction of the weighted product actions}
For a two-vertex bigon, the cycle is unknotted by the argument for graph~8 in \cite[Section 6]{KSR}. The interval and no-edge cases need no knot obstruction. Removing the two singular tubes therefore gives the orbifold product $S^2(p,q)\times[0,1]$. Lifting the interval direction and applying Lemma~\ref{lem:extension} gives
\begin{equation}\label{eq:SU2double}
 M\cong_G U^G_{p,q}\cup U^G_{p,q}
 =G\times_TS(\R\oplus\C_p\oplus\C_q)=N^G_{p,q}.
\end{equation}
For odd $p,q$, the oriented five-plane bundle over $G/T=S^2$ has $w_2=p+q\bmod2=0$, hence is trivial. The underlying manifold is $S^2\times S^4$.

Conversely, every positive coprime odd pair gives a smooth effective induced action with two singular orbits and the indicated slice pair. The diagonal quotient of a bi-invariant metric on $G$ and a round metric on the four-sphere gives invariant nonnegative curvature. In particular all pairs in \eqref{eq:SU2pairs} are smoothly realizable. This construction does not resolve invariant positive curvature, including for $N^G_{1,1}$, which is the diagonal action in Liu's Remark~4.12.

\subsection{Three singular orbits with one exceptional interval}
\begin{proposition}\label{prop:SU2Eunique}
A $\SU(2)$ action satisfying \eqref{eq:H} with three singular orbits and an exceptional interval is equivariantly diffeomorphic to the commuting right block action on the Eschenburg flag $\Eflag$.
\end{proposition}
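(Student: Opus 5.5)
The plan is to copy the reconstruction used for type VI in Section~\ref{sec:so3cyclic}: split $M$ along a hypersurface $S^2\times S^3$ into a normal disk bundle of the exceptional four-manifold and one singular-orbit tube, show that both pieces and the gluing are forced, and then match the result with the Eschenburg model.

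\emph{Forced combinatorics.} By the weight analysis preceding Table~\ref{tab:SU2labels}, the quotient graph consists of a single exceptional interval together with an isolated singular vertex. By Theorem~\ref{thm:matching} the interval has order three, so its two endpoints are leaves with slice pair $(1,3)$. The isolated vertex has no incident edges, so its slice pair is $(1,1)$.

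\emph{Decomposition.} The full inverse image of the closed interval is the cohomogeneity-one four-manifold
\[
 A_3=G\times_TS(\R\oplus\C_3)=\mathbb P(\one\oplus L^3)\to S^2,
\]
using $d_G=1$ in \eqref{eq:Am}. I will remove two disjoint quotient balls, one around the interval and one around the isolated vertex. As at the end of the proof of Proposition~\ref{prop:graphlist}, a single tame arc with free endpoints in $S^3$ can be straightened, so the complement in the quotient is $S^2\times[0,1]$ and carries only free orbits. Lifting the interval direction with an invariant connection identifies the corresponding region of $M$ with an equivariant product, which can be absorbed into a collar. This gives
\[
 M\cong_G D(\nu A_3)\cup_{S^2\times S^3}U^G_{1,1}.
\]

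\emph{Normal line.} Apply Lemma~\ref{lem:normal} with $m=3$. The endpoint transverse characters come from slices $(1,3)$, so both have absolute value one. Normalize $u=1$. The congruence $v\equiv u\pmod 3$ excludes $v=-1$, so $v=1$. The Euler evaluations are then $1$ and $1$ on the two sections and $0$ on the fiber. These are primitive in the sense of \eqref{eq:primitive}, so the common boundary is simply connected, as it must be. Hence $\nu$ is determined up to a simultaneous reversal of its orientation. Since $(1,1)$ is an odd pair, \eqref{eq:tubetrivial} and Lemma~\ref{lem:extension} absorb the final attaching map into $U^G_{1,1}$. Therefore the equivariant diffeomorphism type of $M$ is uniquely determined.

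\emph{Model.} It remains to check that the commuting right block action on $\Eflag$ realizes exactly these data. The required features are:
\begin{itemize}
\item three singular spheres, which must be the fixed set of the center;
\item exactly one exceptional stratum, a $C_3$ interval joining two of them;
\item trivial principal isotropy;
\item quotient $S^3$.
\end{itemize}
Once these are verified, the uniqueness above gives the equivariant identification $M\cong_G\Eflag$. I expect this verification to be the main obstacle. Computing isotropy groups on the biquotient $\SU(3)/\!/T^2$ requires solving the Eschenburg equations for elements of the block $\SU(2)$ commuting with the right action. My first attempt would be to restrict to the maximal torus $T$ and read off the tangent weights at the six $T$-fixed points; these give the slice pairs and locate the $C_3$ isotropy. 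As a rational consistency check, Proposition~\ref{prop:localization} applied to these weights should reproduce the known cubic form and $p_1$ of $\Eflag$, which also distinguishes this model from $\Wflag$.
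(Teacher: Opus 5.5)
Your proposal is correct and follows the paper's proof essentially step for step: order three from Theorem~\ref{thm:matching}, the decomposition $M\cong_G D(\nu A_3)\cup U^G_{1,1}$ over the free $S^2\times[0,1]$ complement, the forced characters $u=v=1$ (so $\nu A_3\cong\rho^*L$), and Lemma~\ref{lem:extension} to absorb the gluing. The model check that you flag as the main obstacle is easier than your torus-fixed-point plan. The paper reads the block stabilizer of $[g]$ directly from the last column $v$ of $g$ via $\diag(z,w,zw)v=(zw)^2v$. This gives trivial isotropy for generic support, a circle for single-coordinate support, and the noncentral $C_3$ exactly on support $\{1,2\}$.
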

\begin{proof}
The interval has order three. Its inverse image is
\[
 A_3=G\times_TS(\R\oplus\C_3)=\mathbb P(\one\oplus L^3)\to S^2,
\]
and the third singular orbit has slice $(1,1)$. The absolute normal characters at the two sections of $A_3$ are both one. Orient the normal line so that the first character is $u=1$. Compatibility in Lemma~\ref{lem:normal} forces the other character to be $v=1$, because $-1\not\equiv1\pmod3$. Consequently its Euler class is one on a section and zero on a fiber. The line is equivariantly
\begin{equation}\label{eq:Eschnormal}
 \nu A_3\cong\rho^*L,
\end{equation}
where $\rho:A_3\to S^2$ is the bundle projection. In induced coordinates its disk bundle is
\[
 G\times_T\bigl(S(\R\oplus\C_3)\times D(\C_1)\bigr).
\]
The unit normal coordinate has weight one, so fixing its phase identifies the boundary equivariantly with $G\times S^2$.

Remove this edge neighborhood and a tube about the remaining singular orbit. The quotient complement is a free $S^2\times[0,1]$ product, and the principal $\SU(2)$ bundle on it is trivial. Absorb it into a collar. The complete manifold is therefore the union of the disk bundle in \eqref{eq:Eschnormal} and $U^G_{1,1}$. Lemma~\ref{lem:extension} absorbs every attaching map on the free boundary. The endpoint characters determine the normal line, so there is a unique equivariant diffeomorphism type with these data. The explicit action verified below realizes this type on $\Eflag$.
\end{proof}
This is a smooth reconstruction, not merely the agreement of Betti numbers recorded in Liu's Example~5.5. Replacing $3$ by any odd $k\ge3$ in the same disk-bundle construction still gives a smooth action with one $C_k$ interval. Theorem~\ref{thm:matching} excludes its positive-curvature realization for $k>3$. Thus the restriction to order three is geometric, not a local smooth-slice restriction.

\subsection{The Eschenburg model and its stabilizers}
Use the determinant-preserving free torus action
\begin{equation}\label{eq:Etorus}
 (z,w)\cdot g=\diag(z,w,zw)\,g\,\diag(1,1,z^2w^2)^{-1},
 \qquad g\in\SU(3).
\end{equation}
Its quotient is $\Eflag$. The two diagonal factors have the same determinant, so \eqref{eq:Etorus} preserves $\SU(3)$ even though neither factor separately need belong to it. The right upper-left $\SU(2)$ block commutes with this torus action. To express the block action as a left action, write $A\cdot[g]=[g\diag(A,1)^{-1}]$.

For completeness, the stabilizer computation needed to identify the reconstruction is short. If a torus element fixes $g$, the multisets $\{z,w,zw\}$ and $\{1,1,(zw)^2\}$ agree. Two of $z,w,zw$ must then be one, forcing $z=w=1$. Thus the torus action is free. If $v=(v_1,v_2,v_3)$ is the last column of $g$, an element of the torus contributes to the block stabilizer precisely when
\[
 \diag(z,w,zw)v=(zw)^2v.
\]
For the nonzero coordinates the respective conditions are
\[
 v_1\ne0:\ zw^2=1,\qquad
 v_2\ne0:\ z^2w=1,\qquad
 v_3\ne0:\ zw=1.
\]
All three nonzero coordinates give trivial stabilizer. A single nonzero coordinate gives a circle, accounting for three singular orbits. Supports $\{1,3\}$ and $\{2,3\}$ also give trivial stabilizer. On support $\{1,2\}$, the equations yield $z=w$ and $z^3=1$, giving the noncentral order-three subgroup in the block. The parameter $|v_1|^2$ runs along its open quotient interval. The third circle orbit is isolated from that interval. This is exactly the pattern in Proposition~\ref{prop:SU2Eunique}.

Positive curvature for this model and invariance under the commuting block action are established in the Eschenburg construction and its symmetry analysis \cite{Eschenburg,EschenburgInhom,GSZ,LiuCorrected}. We use that metric theorem directly, without reproducing its curvature computation. The ordinary flag $\Wflag$ with the left upper-left block action likewise has three circle orbits and no exceptions and carries its invariant Wallach metric \cite{Wallach,LiuCorrected}.

The integral data for these flag manifolds are standard and can be obtained from biquotient transgression and the tangent representation; see \cite{EschenburgCohom,GKZsymplectic,GKZ}. In a common convenient convention they are
\begin{align}
 H^*(\Wflag;\Z)&=\Z[x,y]/(x^3,y^2-xy+x^2),& p_1(\Wflag)&=0,\label{eq:Wflag}\\
 H^*(\Eflag;\Z)&=\Z[x,y]/(x^3,y^2-xy-x^2),& p_1(\Eflag)&=8x^2.\label{eq:Eflag}
\end{align}
Both have torsion-free cohomology, are spin, and are oriented by $\int x^2y=1$. To relate \eqref{eq:Eflag} to the torus characters $a,b$ in \eqref{eq:Etorus}, take $x=a+b$, $y=b$: the familiar relations are $a^2+3ab+b^2=0$ and $ab(a+b)=0$. For \eqref{eq:Wflag}, the usual relations are $a^2+ab+b^2=0$, $ab(a+b)=0$, and one may take $x=a$, $y=-b$. These changes of generators reconcile the two common sign conventions; they do not change the characteristic classes.

In particular the cubic polynomials in this convention are
\[
 \int_{\Wflag}(sx+ty)^3=3s^2t+3st^2,\qquad
 \int_{\Eflag}(sx+ty)^3=3s^2t+3st^2+2t^3.
\]
Their real root types differ. More decisively for smooth recognition, the first Pontryagin class is zero on $\Wflag$ and nonzero on $\Eflag$. Equal Betti numbers do not identify these manifolds.

\subsection{Completion of the $\SU(2)$ classification}
For three singular orbits and no exceptions, Liu's corrected Theorem~4.11 already gives the equivariant action on $\Wflag$. For one singular orbit, the absence of a one-vertex loop implies there are no exceptions, so the same theorem gives the linear action
\begin{equation}\label{eq:SU2sphere}
 A(u,v)=(Au,\Ad_Av),\qquad
 (u,v)\in S(\C^2\oplus\mathfrak{su}(2)).
\end{equation}
There is no need to repeat Liu's exceptional-free gluing proof. This representation has one singular sphere and no $G$-fixed point. It should be distinguished from $\C^2\oplus\R^3_{\mathrm{triv}}$, the triple suspension of the Hopf action, which has a fixed $S^2$ and a ball quotient. Thus the explicit model in Liu's Example~5.2(1), rather than a literal suspension description, is used in the spherical-quotient case.

The weight reductions, \eqref{eq:SU2double}, and Proposition~\ref{prop:SU2Eunique} prove Theorem~\ref{thm:mainSU2}. The remaining uncertainty in its two-orbit row is metric realizability on a determined smooth action, not a missing diffeomorphism classification.

% ===== 06_reconstruction.tex =====
\section{Global consequences and the remaining reconstruction problem}\label{sec:reconstruction}

\subsection{Rational ellipticity throughout the symmetry class}
\begin{proof}[Proof of Corollary~\ref{cor:globalelliptic}]
Liu's orbit-space dichotomy gives $X\cong S^3$, $B^3$, or $B^4$. If $X\cong S^3$, rational ellipticity is Theorem~A of Khalili Samani--Radeschi \cite{KSR}. The $B^4$ case and the $\SU(2)$ cases with boundary are identified by Liu with $S^6$ or $\CP^3$ \cite[Theorems~1.1--1.2 and~3.1]{LiuCorrected}.

For $G=\SO(3)$ and $X\cong B^3$, Section~\ref{sec:ball} exhausts the configurations. Cases~(a), (d), and~(f) give a sphere or $\CP^3$, and case~(e) is excluded. In case~(b), the fibration $S^2\to E_n\to S^4$ implies rational ellipticity by the rational homotopy exact sequence. In case~(c), the decomposition \eqref{eq:Mkdef} is a double disk bundle with $b_2(M_k)=2$. Theorem~A of DeVito--Galaz-Garc\'ia--Kerin \cite{DGGK} applies: its six-dimensional hypothesis is $b_2\le3$, which is satisfied here. Thus all cases are rationally elliptic.

Liu gives $\chi(M)\in\{2,4,6\}$. A simply connected rationally elliptic space with positive Euler characteristic has no odd-degree rational cohomology \cite{FHT}. Poincar\'e duality now gives $\chi(M)=2+2b_2(M)$. Herrmann's classification of rationally elliptic six-manifolds \cite{Herrmann}, restricted to this Betti range and positive Euler characteristic, yields the six real homotopy types stated in the corollary.
\end{proof}

The conclusion combines two independent geometric mechanisms: the boundaryless quotient theorem and the explicit disk-bundle reconstructions for quotients with boundary. No rational-ellipticity hypothesis is imposed on $M$ in addition to \eqref{eq:H}. On the other hand, rational ellipticity supplies no general torsion-freeness theorem. In the cyclic and ball cases, torsion-freeness follows from the actual decompositions; it is not inferred from Corollary~\ref{cor:globalelliptic}.

\subsection{What is recorded by a graph}
The tables in Section~\ref{sec:so3s3} record an \emph{abstract weighted graph}. They distinguish singular vertices, branching exceptional vertices, and cyclic edge orders. They do not by themselves specify a smooth $G$-manifold. Standard equivariant reconstruction \cite[Chapters~IV--V]{Bredon} requires the following additional information.

First one specifies a tame embedding of the graph into $S^3$, together with the actual isotropy subgroups, not only their abstract isomorphism types. At each vertex one specifies the slice representation and the inclusions of the edge groups into the vertex group. The normalizer transition data along an edge must intertwine these representations. These conditions include the transverse-character congruences of Lemma~\ref{lem:edgecong} and the normal Euler-class constraints of Lemma~\ref{lem:normal}.

Let $B$ be the closure of the complement of a small stratified neighborhood of the graph. Over $B$ one specifies a principal $G$-bundle and its identifications with the boundary bundles of the local models. Principal $\SO(3)$-bundles over a three-dimensional CW complex are classified by $w_2\in H^2(B;\Z/2)$; principal $\SU(2)$-bundles over such a complex are trivial \cite{Husemoller,MS}. Finally one specifies the equivariant attaching maps, modulo automorphisms extending over the pieces. In the cyclic configurations, the unknotting input and Lemma~\ref{lem:extension} remove these apparent extra choices. That reduction has not been established for every branching graph.

This distinction also explains why the present graphs should not be confused with GKM graphs. Under the hypotheses of GKM theory, a higher-rank torus action can encode integral equivariant characteristic classes in a labelled graph; the six-dimensional recognition theorem of Goertsches--Konstantis--Zoller is a particularly relevant example \cite{GKZ}. An abstract finite-isotropy graph for a rank-one nonabelian action does not automatically satisfy those hypotheses or retain the same information. Its edge orders alone do not determine an integral degree-two lattice.

\subsection{Integral completion of the branching cases}
For a specified reconstruction, Mayer--Vietoris determines the integral homology from the inclusion maps of the boundary pieces. These maps, rather than just the ranks of the groups, decide torsion and the primitive lattice
\[
 H^2(M;\Z)\hookrightarrow\bigoplus_i H^2(S_i;\Z).
\]
The rational image is the hyperplane in \eqref{eq:lin}. In a rank-one lollipop case, the integer $t$ in \eqref{eq:lollinumbers} measures information that this rational hyperplane does not detect. Once the lattice is known, Proposition~\ref{prop:localization} gives the cubic cup form and the Pontryagin covector. The transition functions of the tangent bundle, or an equivalent integral normal-bundle calculation, are still needed to determine $w_2$.

When homology is torsion-free, the Wall--Jupp theorem \cite{Wall,Jupp} converts the resulting tuple \eqref{eq:WJ} into an ordinary diffeomorphism classification. If torsion occurs, one must retain its additional linking and normal data and use the appropriate classification framework, rather than apply the torsion-free theorem \cite{Zubr}. In particular, rational cohomology of a sphere does not imply that a theta-graph manifold is a homotopy sphere. If torsion-freeness is separately established, simple connectivity and the Hurewicz theorem do give a homotopy sphere, and $\Theta_6=0$ identifies it with $S^6$ \cite{KM}.

For comparison, the standard characteristic data of several recognition models are recorded below. The degree-two generators and the orientation are part of each entry. The flag data are given in \eqref{eq:Wflag}--\eqref{eq:Eflag}; the data for $E_n$, $P_{a,b}$, and $M_k$ have already been computed.
\begin{center}
\begin{tabular}{p{31mm}p{61mm}p{48mm}}
\toprule
Model & Integral intersection data & Characteristic classes\\
\midrule
$S^6$ & $H^2=H^4=0$ & $w_2=p_1=0$\\
$S^2\times S^4$ & $x^2=0$, $\ang{xy,[M]}=1$ & $w_2=p_1=0$\\
$\CP^3$ & $\ang{x^3,[M]}=1$ & $w_2=0$, $p_1=4x^2$\\
$S^2\times\CP^2$ & $b^2=a^3=0$, $\ang{a^2b,[M]}=1$ & $w_2=\bar a$, $p_1=3a^2$\\
$\CP^3\#\CP^3$ & $xy=0$, $\ang{x^3,[M]}=\ang{y^3,[M]}=1$ & $w_2=0$, $p_1=4x^2+4y^2$\\
\bottomrule
\end{tabular}
\end{center}
The product and projective-space formulas follow from the standard tangent-bundle identities \cite{MS}; connected-sum characteristic numbers are additive. These are recognition data, not assertions of positive-curvature existence for all entries.

\subsection{Overlaps and limits of the classification}
Different orbit stratifications can occur on the same smooth manifold. The family $E_n$ appears in the uniform-boundary ball case, while Proposition~\ref{prop:dihedralfamily} realizes $E_{(k^2-1)/8}$ with a dihedral lollipop for every odd $k\ge3$. Likewise, $M_1$ is the nontrivial unoriented linear $\CP^2$-bundle over $S^2$; it is diffeomorphic to $P_{1,2}$, although the displayed actions have different orbit-space topology. The assertion follows from their clutching classes modulo three, or from their integral characteristic data. These overlaps do not identify the actions equivariantly.

A complete classification under \eqref{eq:H} would require two further steps. In the explicitly reconstructed families, one must determine which parameters admit invariant positive curvature. In the remaining $\SO(3)$ branching families, one must first finish integral reconstruction and then address curvature. Neither task follows merely from the finite graph catalogue or the bound $b_2\le2$.

The frequently suggested finite target
\begin{equation}\label{eq:target}
 S^6,\qquad\CP^3,\qquad S^2\times S^4,\qquad\Wflag,\qquad\Eflag
\end{equation}
is therefore a classification target, not the conclusion of this paper. For a torsion-free candidate, agreement of the Wall--Jupp data with an entry in \eqref{eq:target} is an exact smooth-recognition criterion. The missing geometric work is to force that agreement and to decide the remaining invariant metric questions. The next section addresses part of the latter issue without confusing a restricted curvature obstruction with a complete classification.

% ===== 07_metrics.tex =====
\section{Curvature obstructions for the diagonal product actions}\label{sec:metrics}

The preceding classifications leave several actions on $S^2\times S^4$ as smooth candidates. We now isolate what can be proved about the two diagonal actions discussed by Liu. The restrictions below do not require or imply a positive-curvature metric on any of the other smooth models.

\subsection{The two actions and the space of invariant metrics}
For the $\SO(3)$ action, write
\begin{equation}\label{eq:SO3productaction}
 M=S^2\times S(\R^3\oplus\R^2),\qquad
 A(x,y,z)=(Ax,Ay,z).
\end{equation}
The orbit map and singular set are
\begin{equation}\label{eq:SO3productquotient}
 q(x,y,z)=(x\cdot y,z)\in B^3,\qquad
 F=\{(x,tx,z):t^2+|z|^2=1\}\cong S^2\times S^2.
\end{equation}
Indeed, the Gram data $|x|=1$, $|y|^2=1-|z|^2$, and $x\cdot y=t$ classify pairs of vectors up to $\SO(3)$. Isotropy is trivial when $x,y$ are independent and is $\SO(2)$ when they are dependent. This is the $n=0$ member of case~(b). Its normal bundle is $\nu F\cong\operatorname{pr}_1^*TS^2$, with Euler class $2a$.

For the $\SU(2)=\Sp(1)$ action, quaternionic coordinates give
\begin{equation}\label{eq:SU2productaction}
 M=S(\operatorname{Im}\HH)\times S(\HH\oplus\R),\qquad
 a(u,q,t)=(aua^{-1},aq,t).
\end{equation}
The two singular orbits are $q=0$, $t=\pm1$. They are exactly the fixed components of the center. For $q\ne0$, the quotient coordinates are
\[
 \left(\frac{\bar q u q}{|q|^2},t\right)\in S^2\times(-1,1).
\]
Collapsing each end sphere gives $S^3$. This is $N^{\SU(2)}_{1,1}$. The center acts nontrivially on the quaternionic coordinate, so this action does not descend to the $\SO(3)$ action in \eqref{eq:SO3productaction}.

Both actions have an induced description $G\times_T S^4$. A connection on $G\to G/T$ identifies their tangent bundle with
\[
 G\times_T\bigl((S^4\times\mathfrak m)\oplus TS^4\bigr),
 \qquad\mathfrak m=\mathfrak g\ominus\mathfrak t.
\]
Consequently an arbitrary invariant metric is exactly a smooth $T$-equivariant positive-definite block form
\begin{equation}\label{eq:allmetrics}
 \begin{split}
 g_z\bigl((A,U),(B,V)\bigr)
 ={}&a_z(A,B)+c_z(A,V)+c_z(B,U)+b_z(U,V),
 \end{split}
\end{equation}
where $A,B\in\mathfrak m$ and $U,V\in T_zS^4$. This is a parametrization of all invariant metrics, not a connection-metric ansatz. In particular the cross term $c_z$ cannot in general be discarded. At a pole of the $\SU(2)$ slice, equivariance forces this cross term to vanish because the orbit tangent has weight two and the normal slice has weight one. It does not force the cross term to vanish away from the poles.

Nor does invariance force either product projection to be a Riemannian submersion with totally geodesic fibers. For example, $(2+t)^2g_{S^2}+g_{S^4}$ is invariant under \eqref{eq:SU2productaction} but has varying orbit radii. The invariant function $\langle\bar q u q,i\rangle$ can be used in a positive conformal factor to break the full commuting right-$\Sp(1)$ symmetry. In \eqref{eq:SO3productaction}, the positive invariant conformal factor $2+z_1$ breaks the standard suspension-coordinate $\OO(2)$ symmetry. These examples are not positive-curvature constructions; they show that the additional hypotheses in the obstructions below are genuine restrictions.

\subsection{Cheeger deformations and conformal changes}
\begin{proposition}\label{prop:productCheeger}
For either action, let $g_0$ be a $G$-invariant product metric on $S^2\times S^4$. Every diagonal Cheeger deformation $g_s$, $s\ge0$, has a zero-curvature two-plane at every point. No conformal rescaling of any $g_s$ has everywhere positive sectional curvature.
\end{proposition}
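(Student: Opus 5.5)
Given a point $p=(x,z)$ of $S^2\times S^4$, the plan is to produce an explicit two-plane $\sigma_p$ of curvature zero for $g_0$ and then check that the Cheeger deformation and a conformal change keep it flat.

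First, for $g_0$ itself. A $G$-invariant product metric is $g_0=g_1\oplus g_2$, with $g_1$ a $G$-invariant metric on $S^2$ and $g_2$ one on $S^4$. Any plane spanned by $(U,0)$ and $(0,V)$ has curvature zero, since a product metric has vanishing mixed curvature.

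Next, the Cheeger deformation. Take $g_s$ to be the quotient of $g_0+\frac1s Q$ on $M\times G$ by the diagonal $G$ action. I will use the standard facts from Cheeger's construction:
\begin{itemize}
\item $\Sec_{g_s}(C_s^{-1}X,C_s^{-1}Y)=0$ holds only if $\Sec_{g_0}(X,Y)=0$ and the O'Neill term vanishes, where $C_s$ is the usual reparametrization of tangent vectors;
\item conversely, $\Sec_{g_s}(C_s^{-1}X,C_s^{-1}Y)=0$ when in addition the orbit components satisfy $[X_{\mathfrak g},Y_{\mathfrak g}]=0$, their plane in $(\mathfrak g,Q)$ being then flat, and the O'Neill tensor of the submersion $M\times G\to M$ vanishes on the horizontal lifts.
\end{itemize}
So I will choose $X=(U,0)$ and $Y=(0,V)$ so that the action fields $X_{\mathfrak g}$ and $Y_{\mathfrak g}$ commute, or one of them is zero. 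A workable choice is to take $U$ tangent to the $T$-orbit through $x$ (the infinitesimal rotation $A^*_x$ for $A\in\mathfrak t_x$) and $V=A^*_z$ for the same $A$, when these are nonzero. Alternatively, take $V$ $g_2$-orthogonal to the orbit $G\cdot z$, so that $Y_{\mathfrak g}=0$.

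The main obstacle is verifying that the O'Neill term vanishes, that is, that the mixed product plane lifts to a flat plane in $M\times G$ and stays flat after projection. My first attempt is to use the alternative choice: pick $U$ horizontal-free and $V\perp G\cdot z$, so only one factor moves. Orbit dimension count makes this feasible: $G\cdot z$ has dimension at most $3$ in $S^4$, so a $g_2$-normal vector $V$ exists at every $z$. If the orbit of $z$ is principal, I instead take $U$ generated by the isotropy $T_x$ of $x$; such $U$ is a Killing direction with $U_{\mathfrak g}$ in $\mathfrak t$.

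Points where the construction degenerates, such as the poles of the $\SU(2)$ slice, I would handle by continuity. Flatness of some plane is a closed condition in the Grassmannian bundle, so a flat plane at every point of a dense set gives a flat plane at every point.

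Finally, the conformal rescaling. The obstruction I will use is global rather than pointwise: if $e^{2f}g_s$ had $\Sec>0$, then restricting to a totally geodesic flat torus would be contradictory. Concretely, take a product $\gamma_1\times\gamma_2$ of closed geodesics, one in each factor, invariant under $T$; in $g_s$ this is a flat totally geodesic torus. The conformal metric restricted to it is $e^{2f}$ times flat. The ambient curvature of a tangent plane equals the intrinsic Gauss curvature minus the second fundamental form terms. Averaging over the torus with Gauss--Bonnet ($\int K=0$), and using that the umbilic conformal second fundamental form contributes $-|\nabla^\perp f|^2\le0$, gives
\[
\int \Sec_{\tilde g}(T\Sigma)\,dA\le 0,
\]
which contradicts positivity.
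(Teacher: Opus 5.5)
Your choice of vectors in the Cheeger step matches the paper's: $X=(U,0)$ with any $0\ne U\in T_xS^2$, and $Y=(0,V)$ with $V$ $g_2$-orthogonal to $G\cdot z$. Such a $V$ exists at every point, so the isotropy-generated alternative (which as written would vanish at $x$) and the continuity argument are unnecessary.

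\textbf{Gap in the Cheeger step.} The step you yourself call the main obstacle is never carried out, and ``only one factor moves'' does not settle it. At $(p,1)$ the horizontal lift of $X$ for $\sigma(p,a)=a^{-1}p$ is $\widehat X=(U,0,-sL)$, where $Q(L,Z)=g_1(U,Z^*_x)$. This lift has a nonzero $\mathfrak g$-component, so the vertical part of $[\widehat X,\widehat Y]$ must actually be computed. The missing observation is the following. Because $g_0=g_1+g_2$ is a product and the action is diagonal, the metric dual $\eta_Z$ of each vertical fundamental field splits as $\mathrm{pr}_{S^2}^*\alpha_1+\mathrm{pr}_{S^4}^*\alpha_2+\mathrm{pr}_G^*\alpha_3$. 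The vector $\widehat Y=(0,V,0)$ has a component in no factor in which $\widehat X$ has one. Hence every term of $d\eta_Z(\widehat X,\widehat Y)$ vanishes. For horizontal extensions this gives $[\widehat X,\widehat Y]^{\mathrm{vert}}=0$, i.e.\ the O'Neill term is zero. The plane $\widehat X\wedge\widehat Y$ is also flat in the product $(S^2\times S^4\times G,\,g_1+g_2+s^{-1}Q)$. O'Neill's formula then yields the zero plane in $(M,g_s)$.

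\textbf{Gap in the conformal step.} This gap is more serious. Your Gauss--Bonnet computation is valid once $g_s$ has a closed totally geodesic torus; flatness is not even needed. But nothing you have proved supplies such a torus. The first step gives a zero plane at each point, not a surface tangent to such planes. The claim that a $T$-invariant product $\gamma_1\times\gamma_2$ of closed geodesics is flat and totally geodesic for $g_s$ is simply asserted, and it is not automatic. The Cheeger deformation rescales orbit directions, and such tori contain them. For the $\SU(2)$ action, for instance, a $T$-invariant closed curve in $S^4$ away from the poles is a free $T$-orbit, and you give no isometry or computation forcing the product to be totally geodesic.

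\textbf{The repair.} The repair is pointwise, and it is the reason the first assertion is stated at \emph{every} point. At a minimum $p_0$ of $f$ one has $df=0$ and $\Hess f\ge0$. Take a $g_s$-orthonormal basis $U,V$ of the zero plane at $p_0$. The conformal curvature formula then gives $K_{e^{2f}g_s}(U\wedge V)=e^{-2f}\bigl(-\Hess f(U,U)-\Hess f(V,V)\bigr)\le0$.
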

\begin{proof}
Use the Cheeger submersion \cite{CheegerNotes}
\[
 \sigma:(M\times G,g_0+s^{-1}Q)\longrightarrow(M,g_s),\qquad
 \sigma(p,a)=a^{-1}p,
\]
where $Q$ is bi-invariant and $s>0$. At $p$ choose $0\ne X\in T S^2$ and $0\ne Y\in T S^4$ orthogonal to the $G$-orbit in the second factor. Such $Y$ exists because those orbits have dimension at most three. Define $L\in\mathfrak g$ by $Q(L,Z)=g_0(X,Z^*)$. At $(p,1)$ the vectors
\[
 \widehat X=(X,-sL),\qquad\widehat Y=(Y,0)
\]
are horizontal. Their ambient curvature numerator is zero: the $M$ components form a mixed product plane and the second $G$ component vanishes.

The O'Neill term also vanishes. The metric-dual one-form $\eta_Z$ of a vertical fundamental field is a sum of forms from $M$ and $G$. Its $M$ part is the sum of one-forms pulled back from the two product factors. Hence $d\eta_Z(\widehat X,\widehat Y)=0$ for every $Z$. For horizontal extensions this implies $[\widehat X,\widehat Y]^{\mathrm{vert}}=0$. O'Neill's horizontal curvature formula \cite{ONeill} therefore gives a zero plane in $(M,g_s)$. The case $s=0$ is immediate.

More generally, suppose a metric $g$ on a compact manifold has a nonpositive plane at every point. At a minimum of a smooth function $f$, its gradient vanishes and its Hessian is nonnegative. The conformal curvature formula \cite{Petersen} gives, for a $g$-orthonormal basis $U,V$ of such a plane,
\[
 K_{e^{2f}g}(U\wedge V)
 =e^{-2f}\bigl(K_g(U\wedge V)-\Hess_gf(U,U)-\Hess_gf(V,V)\bigr)\le0.
\]
Apply this to each $g_s$.
\end{proof}

The same minimum argument excludes a warped product over a closed base with nontrivial spherical fiber: the mixed curvature is $-f^{-1}\Hess f$ at a minimum of the warping function. It does not apply to the full block form \eqref{eq:allmetrics}.

\subsection{Both totally geodesic product fibrations are excluded}
\begin{theorem}\label{thm:noTGproduct}
There is no positively curved metric on $S^2\times S^4$ for which either standard product projection is a Riemannian submersion with totally geodesic fibers. No invariance assumption is needed.
\end{theorem}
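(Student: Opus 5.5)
The plan is to run both cases through O'Neill's formulas for a Riemannian submersion with totally geodesic fibers ($T\equiv0$). In that setting Hermann's theorem makes the submersion a fiber bundle whose structure group lies in the isometry group of the fiber metric. For unit horizontal $X$ and unit vertical $V$, the vertizontal curvature is
\[
 K(X,V)=|A^*_XV|^2=|(\nabla_XV)^{\mathrm{hor}}|^2 .
\]
So $\Sec>0$ forces the linear map $V\mapsto A^*_XV$ to be injective for every nonzero horizontal $X$. Its image lies in $X^\perp\cap\mathcal H$, because $A$ is skew in the sense that $A^*_XV\perp X$. This gives the pointwise necessary condition $\dim\mathcal V\le\dim\mathcal H-1$.

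\emph{Projection to $S^2$.} Here the fibers are the $S^4$ factors, so $\dim\mathcal V=4$ and $\dim\mathcal H=2$. The inequality fails, so no mixed plane can have positive curvature. This case should be immediate.

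\emph{Projection to $S^4$.} Here the fibers are two-spheres, and $\dim\mathcal V=2<\dim\mathcal H-1=3$, so a global argument is needed. I would pass to the dual condition: for each nonzero horizontal $X$, the map $Y\mapsto A_XY=\tfrac12[X,Y]^{\mathrm{vert}}$ from $X^\perp\cap\mathcal H$ to $\mathcal V_p$ is surjective. With totally geodesic fibers, the vertical field $A_XY$ along a fiber is the restriction of a Killing field of that fiber, namely the curvature of the Ehresmann connection evaluated on $X,Y$. Surjectivity at every point of the fiber therefore forces the Killing fields of the fiber metric to span the tangent space everywhere. Hence the fiber is a homogeneous two-sphere, i.e.\ round, and the structure group reduces to $\SO(3)$. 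The metric is then the connection metric of a principal $\SO(3)$-bundle $P\to S^4$ with a connection $\omega$ that is \emph{fat} in Weinstein's sense: $\mu\circ\Omega_X\ne0$ on $X^\perp$ for every nonzero $X$ and every nonzero $\mu\in\mathfrak{so}(3)^*$.

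The total space is $S^2\times S^4$ and the projection is the standard product projection. So the associated three-plane bundle is $\xi_0$, with $p_1(\xi_0)=0$, in the notation of Proposition~\ref{prop:ballb}. The step I expect to be the main obstacle is ruling out a fat connection on this trivial bundle. I would first try a Chern--Weil positivity argument. Fatness for a rank-three structure group over a four-manifold forces $\Omega$ to be pointwise nondegenerate as a map $\Lambda^2T_xS^4\to\mathfrak{so}(3)$ on a three-dimensional subspace of $\Lambda^2$. I expect this to force the form $\operatorname{tr}(\Omega\wedge\Omega)$ to be a nowhere-vanishing, hence definite, multiple of the volume form (Derdzi\'nski--Rigas). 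Integrating would give $p_1(\xi_0)\ne0$, a contradiction. If the pointwise algebra is delicate, I would cite Derdzi\'nski--Rigas directly: among $\SO(3)$- and $\SU(2)$-bundles over $S^4$, only the Hopf type carries fat connections, and its associated sphere bundle is $\CP^3=E_1$, not $E_0=S^2\times S^4$.

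No invariance is used anywhere, since Hermann's theorem and the O'Neill identities are purely local.
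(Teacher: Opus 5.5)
Your proposal is correct. Up to the reduction to a fat $\SO(3)$-connection on a round $S^2$-bundle it is the paper's proof: the same dimension count for the projection to $S^2$, and the same Killing-field argument for roundness. The two proofs differ only in the final obstruction.

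\emph{The paper's ending.} It feeds fatness into the Fine--Panov coupling-form construction, which gives a symplectic form on the total space. It then uses only the cohomology ring of $S^2\times S^4$, in which every degree-two class has zero cube.

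\emph{Your ending.} You stay on the base and use Chern--Weil. The step you flagged as the main obstacle is elementary. At each point, $w\mapsto\Omega_w\wedge\Omega_w$ is a quadratic form on $\R^3$, since a two-form on $\R^4$ is nondegenerate exactly when its square is nonzero. Fatness says this form has no zero on the connected set $\R^3\setminus\{0\}$, so it is definite. Hence its trace $\sum_i\Omega_i\wedge\Omega_i$, a constant multiple of $\operatorname{tr}(\Omega\wedge\Omega)$, is a nowhere-zero four-form. By connectedness its sign is constant on $S^4$, so $p_1(\xi)\neq0$. Derdzi\'nski--Rigas, which also excludes $|n|\ge2$, is therefore more than you need.

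\emph{One step to justify.} You should prove $p_1(\xi)=0$ rather than read it off from the product projection, because the $\SO(3)$-reduction is produced by the metric. For the projection $\pi\colon M\to S^4$ one has $TM\oplus\R\cong\pi^*(TS^4\oplus\xi)$, so $\pi^*p_1(\xi)=p_1(S^2\times S^4)=0$. The map $\pi^*$ is injective on $H^4$ by the Gysin sequence. Alternatively, use $x^2=ny$ from Proposition~\ref{prop:ballb}.

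\emph{How the two endings compare.} They are really one computation: integrating the cube of the coupling form over the fibers gives a nonzero constant times $\int_{S^4}\sum_i\Omega_i\wedge\Omega_i$. Your version is self-contained. It also proves more generally that a fat round $S^2$-bundle over a closed connected oriented four-manifold has $p_1(\xi)\neq0$. The paper's version never has to identify the structure bundle and needs only the ring of the total space, at the price of citing the coupling-form theorem.
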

\begin{proof}
We use O'Neill's mixed curvature formula for totally geodesic fibers \cite{ONeill}:
\begin{equation}\label{eq:ONmixed}
 K(X,V)=|A_XV|^2
\end{equation}
for horizontal and vertical unit vectors. For the projection onto $S^2$, the linear map $A_X$ sends the four-dimensional vertical space into the one-dimensional horizontal space orthogonal to $X$. It has a nonzero kernel, producing a zero mixed plane.

Consider the projection onto $S^4$. We first show that the two-sphere fibers would necessarily be round; this is not an initial assumption. Totally geodesic fibers are mutually isometric under horizontal transport. The curvature of this transport connection is a vertical Killing field on each fiber, represented, up to the conventional sign and factor, by $2A_XY$. At a point of a fiber, if the values of these Killing fields failed to span its tangent plane, a nonzero vertical $V$ would be perpendicular to all $A_XY$. The identity $\langle A_XV,Y\rangle=-\langle V,A_XY\rangle$ would imply $A_XV=0$ for all $X$, contradicting \eqref{eq:ONmixed}. The fiber isometry group therefore acts transitively. Its Gaussian curvature is constant, and Gauss--Bonnet makes that constant positive. All fibers are isometric round spheres of one radius.

The connection thus reduces to an $\SO(3)$ connection on the round sphere bundle. Let $\Omega$ be its curvature, identified fiberwise with a two-form on the base valued in $\R^3$. At a unit sphere point $p$, the horizontal O'Neill tensor is
\[
 A_XY=-\tfrac12\Omega(X,Y)\times p.
\]
For every nonzero $w\in\R^3$, the two-form $\Omega_w=\langle\Omega,w\rangle$ on the four-dimensional base must be nondegenerate. Otherwise choose $X\ne0$ in its kernel, $p\perp w$, and the vertical vector $V=w\times p$. Since $p\times V=w$, the displayed formula implies $A_XV=0$, again contradicting positivity. This is the fatness condition for the associated two-sphere bundle.

The standard coupling-form construction for a fat round two-sphere bundle now gives a symplectic form on its total space; see Fine--Panov \cite[Section~2.2]{FinePanov}. We use that theorem directly. In a trivialization its familiar local expression is the fiber area form minus $d\langle p,\gamma\rangle$, with $\gamma$ the connection one-form, and nondegeneracy is precisely the condition on $\Omega_w$ above. But
\[
 H^2(S^2\times S^4;\R)=\R\alpha,\qquad\alpha^2=0.
\]
Every degree-two class has zero cube, whereas a symplectic form on a closed six-manifold has nonzero cubic integral. This contradiction proves the theorem.
\end{proof}

The argument extracts fatness from the submersion hypothesis. The absence of a symplectic form by itself is not an obstruction to positive sectional curvature: $S^6$ is the simplest warning against that inference.

\subsection{Additional symmetries forbidden by positive curvature}
\begin{proposition}\label{prop:additional}
For the action \eqref{eq:SO3productaction}, a positively curved invariant metric cannot extend to the diagonal $\OO(3)$ action. Moreover no nonidentity standard transformation
\[
 (x,y,z)\longmapsto(x,y,Rz),\qquad R\in\OO(2),
\]
can be an isometry of a positively curved metric on $M$.
For \eqref{eq:SU2productaction}, a positively curved invariant metric cannot also be invariant under the full commuting right-$\Sp(1)$ action on $q$.
\end{proposition}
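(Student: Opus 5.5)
The plan is to treat the three assertions separately. Each extra isometry either has a fixed-point set that is too large for the connectedness principle, or enlarges the symmetry group enough that an existing classification applies. Throughout I use that $\Sec>0$ on the closed manifold $M=S^2\times S^4$. No invariant-metric parametrization such as \eqref{eq:allmetrics} is needed.

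\emph{The diagonal $\OO(3)$ extension.} If the metric is invariant under diagonal $\OO(3)$, then $-I$ acts isometrically by $(x,y,z)\mapsto(-x,-y,z)$.
\begin{itemize}
\item It has no fixed point, since $x\ne -x$ on $S^2$.
\item It preserves the orientation of $M$: it is antipodal on $S^2$, which reverses orientation, and on $S^4\subset\R^3\oplus\R^2$ it is a reflection in three coordinates, which also reverses orientation.
\end{itemize}
By Weinstein's theorem (the isometry form of Synge's theorem), every orientation-preserving isometry of an even-dimensional closed positively curved manifold has a fixed point. This is a contradiction.

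\emph{The transformations $(x,y,z)\mapsto(x,y,Rz)$.} Let $R\in\OO(2)$ be nonidentity, and consider its fixed-point component through a point with $z=0$. It is totally geodesic. I would use Wilking's connectedness lemma: a closed totally geodesic submanifold $N$ of codimension $k$ in $M^n$ with $\Sec>0$ has $(n-2k+1)$-connected inclusion. There are two cases.
\begin{itemize}
\item $R$ a rotation. The fixed set is $\{z=0\}=S^2\times S^2$, of codimension $k=2$. The inclusion is then $3$-connected, so $\pi_2(S^2\times S^2)=\Z^2$ would be isomorphic to $\pi_2(M)=\Z$, which is impossible.
\item $R$ a reflection, say fixing the line $z_2=0$. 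The fixed set is $S^2\times S^3$, of codimension $k=1$. The inclusion is then $5$-connected, forcing $\pi_3(S^2\times S^3)=\Z^2\cong\pi_3(S^2\times S^4)=\Z$, again impossible.
\end{itemize}
In either case I need the relevant fixed component to be the whole displayed product, which is clear since it is connected.

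\emph{The $\SU(2)$ action with full right-$\Sp(1)$ invariance.} The group $\Sp(1)\times\Sp(1)$ then acts by
\[
(a,b)\cdot(u,q,t)=(aua^{-1},aqb^{-1},t).
\]
\begin{itemize}
\item On each level $|t|<1$ this action is transitive on $S^2\times S^3$: right multiplication moves $q$ freely, and conjugation then moves $u$.
\item The action is therefore cohomogeneity one, with orbit space an interval and singular orbits the two spheres at $t=\pm1$.
\item By Searle's classification of positively curved cohomogeneity-one manifolds in dimensions at most six, $M$ would have to be diffeomorphic to a rank-one symmetric space, here $S^6$ or $\CP^3$. That contradicts $M=S^2\times S^4$.
\end{itemize}

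\emph{Main obstacle.} I expect the difficulty to lie in citing Searle's theorem in exactly the needed form, namely for an arbitrary invariant metric of the given cohomogeneity-one action. If that is problematic, the fallback is the Grove–Wilking–Ziller structure results. Their Lemma on the codimensions of singular orbits, combined with Wilking's lemma applied to the two codimension-four singular orbits, should already contradict $\pi_*(S^2\times S^4)$. With $n=6$ and $k=4$, the inclusion of a two-sphere is only $(-1)$-connected, so that route would additionally require the group-diagram restrictions.
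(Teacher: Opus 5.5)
Your proposal is correct. It differs from the paper's proof only in the diagonal $\OO(3)$ part.

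For the suspension transformations and the right-$\Sp(1)$ extension, you follow the paper's argument:
\begin{enumerate}
\item Wilking's connectedness lemma on the fixed set $S^2\times S^2$ (codimension two, comparing $\pi_2$).
\item The same lemma on the fixed set $S^2\times S^3$ (codimension one, comparing $\pi_3$).
\item The cohomogeneity-one classification for the enlarged $\Sp(1)\times\Sp(1)$ action.
\end{enumerate}
The paper cites Verdiani's even-dimensional theorem where you cite Searle's low-dimensional one. Either suffices in dimension six, and both apply to an arbitrary positively curved metric invariant under the cohomogeneity-one action. So the worry in your last paragraph is unfounded, and the fallback is not needed (as you note, it would not work on its own). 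In the transitivity check, choose $a$ first to move $u$ and then $b$ to correct $q$, since conjugation by $a$ also moves $q$.

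For the diagonal $\OO(3)$ part, the paper takes a reflection in $\OO(3)$. Its fixed set is $S^1\times S^3$, of codimension two, so its inclusion would be $3$-connected, which contradicts $\pi_1(S^1\times S^3)\ne0$. You instead use the central element $-I$, acting by $(x,y,z)\mapsto(-x,-y,z)$. It has no fixed points and preserves orientation, since it has degree $-1$ on each factor. Weinstein's theorem then gives the contradiction directly.

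Your route is more classical and uses neither simple connectivity of $M$ nor the connectedness lemma. The paper's route handles all three assertions with a single tool. Both arguments exclude one extra isometry, not merely invariance under the whole group.
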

\begin{proof}
Fixed components of isometries are totally geodesic. We apply Wilking's connectedness theorem \cite{WilkingConnected}: a closed codimension-$k$ totally geodesic submanifold of a positively curved $n$-manifold has $(n-2k+1)$-connected inclusion.

A reflection in the diagonal $\OO(3)$ action fixes $S^1\times S^3$, of codimension two. Its inclusion would be three-connected, contradicting its nontrivial fundamental group. A nonidentity rotation of the suspension coordinates fixes $S^2\times S^2$, also of codimension two. Its second homotopy group has rank two, whereas $\pi_2(M)$ has rank one. A reflection of the suspension coordinates fixes $S^2\times S^3$, of codimension one. Five-connectedness would identify its $\pi_3\cong\Z^2$ with $\pi_3(M)\cong\Z$, again impossible.

For the quaternionic action, adjoining right-$\Sp(1)$ makes the action cohomogeneity one: the regular orbits are the level sets of $t$. Verdiani's classification of positively curved, even-dimensional, simply connected cohomogeneity-one manifolds \cite{Verdiani}, also discussed in \cite{GWZ}, would make $M$ a compact rank-one symmetric space. The integral cohomology ring of $S^2\times S^4$ excludes this possibility.
\end{proof}

\subsection{An action-specific boundary identity for the $\SO(3)$ action}
The union $F$ of singular orbits in \eqref{eq:SO3productquotient} is not the fixed set of a single subgroup. It need not be totally geodesic. The following calculation keeps exactly the second-fundamental-form term that this distinction permits.

Write $F=S^2\times B$, $B=S^2$, with orbit tangent bundle $\mathcal V$ and base tangent bundle $TB$. The isotropy $T$ has weights one on $\mathcal V$ and $\nu F$, and weight zero on $TB$. Thus every invariant metric induces
\begin{equation}\label{eq:boundarywarped}
 g|_F=f(b)^2g_{S^2(1)}+h,
 \qquad f:B\to(0,\infty),
\end{equation}
where $f$ is the intrinsic singular-orbit radius. The second fundamental form satisfies
\begin{equation}\label{eq:IIblocks}
 \mathrm{II}(\mathcal V,\mathcal V)=0,
 \qquad \mathrm{II}(TB,TB)=0.
\end{equation}
Indeed, the symmetric square of weight one contains only weights zero and two, not weight one; the zero-weight summand also cannot map equivariantly to the normal plane.

Along the section $\{e_3\}\times B$, choose unit orbit and normal frames; they exist globally because both plane bundles restrict trivially there. Using the complex structures defined by the circle action, the remaining block is encoded by
\begin{equation}\label{eq:boundarybeta}
 \beta\in\Omega^1(B;\C),\qquad
 |\mathrm{II}(U,X)|^2=|\beta(X)|^2
 \quad (|U|=1,\ U\in\mathcal V).
\end{equation}
Equivariance extends this identity to every orbit. A change of unit normal frame multiplies $\beta$ by a unit complex function, leaving its real rank and $\beta^*\beta$ unchanged.

\begin{proposition}\label{prop:boundaryidentity}
For orthonormal orbit vectors $U,V$ and $h$-orthonormal base vectors $X,Y$, the ambient sectional curvatures along $F$ satisfy
\begin{align}
 K_g(U,V)&=\frac{1-|df|_h^2}{f^2},\label{eq:boundaryvertical}\\
 K_g(X,Y)&=K_h,\label{eq:boundarybase}\\
 K_g(U,X)&=-\frac{\Hess_hf(X,X)}{f}+|\beta(X)|^2.\label{eq:boundarymixed}
\end{align}
These identities concern the indicated planes, not all planes in $TM|_F$.
\end{proposition}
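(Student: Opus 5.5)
The plan is to apply the Gauss equation along $F$ together with the intrinsic curvature of the warped-product metric \eqref{eq:boundarywarped}. The only extrinsic input is the block structure \eqref{eq:IIblocks} and the encoding \eqref{eq:boundarybeta} of the mixed block by $\beta$. For tangent vectors $Z,W$ of $F$ we use
\[
 K_g(Z,W)=K_F(Z,W)-\langle\mathrm{II}(Z,Z),\mathrm{II}(W,W)\rangle+|\mathrm{II}(Z,W)|^2 .
\]
The proof therefore splits into two tasks. The first is the intrinsic curvature of $f^2g_{S^2(1)}+h$ on $S^2\times B$. The second is the evaluation of the extrinsic correction for each of the three plane types.

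For the intrinsic part we use the standard warped-product formulas; here the fiber is the orbit $S^2(1)$ and $f$ is the warping function on $B$. Fiber planes have curvature $(1-|df|_h^2)/f^2$. Base planes have curvature $K_h$, since each slice $\{o\}\times B$ is totally geodesic in the warped product. Mixed planes have curvature $-\Hess_hf(X,X)/f$ for $h$-unit $X$.

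For the extrinsic part we use \eqref{eq:IIblocks}. For $U,V\in\mathcal V$ all three terms $\mathrm{II}(U,U)$, $\mathrm{II}(V,V)$ and $\mathrm{II}(U,V)$ vanish, which gives \eqref{eq:boundaryvertical}. For $X,Y\in TB$ the same three terms vanish, which gives \eqref{eq:boundarybase}. For the mixed plane $U\wedge X$, both $\mathrm{II}(U,U)$ and $\mathrm{II}(X,X)$ vanish, so only $|\mathrm{II}(U,X)|^2=|\beta(X)|^2$ survives, and this gives \eqref{eq:boundarymixed}.

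One must also check that $U$ and $X$ are $g$-orthogonal, so that the pair is genuinely orthonormal. This follows from \eqref{eq:boundarywarped}. Equivalently, it holds because $T$ acts with weight one on $\mathcal V$ and weight zero on $TB$, so no invariant cross term can exist.

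The main obstacle is justifying \eqref{eq:boundarywarped} and \eqref{eq:IIblocks} carefully at each point of $F$, rather than only along the section. The weight arguments are pointwise representation theory for the isotropy circle at that point. $G$-equivariance then transports them along the orbits. One must check in particular that the orbit radius $f$ depends only on $b$, and that the base metric $h$ is independent of the orbit point. Both follow because $G$ acts by isometries and preserves the splitting $\mathcal V\oplus TB$, which is the $T$-isotypic decomposition. Once these facts are secured, the rest is the routine Gauss-equation bookkeeping described above.
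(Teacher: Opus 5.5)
Your proposal is correct and follows the paper's own argument: the paper likewise applies the warped-product curvature formulas to \eqref{eq:boundarywarped}, then the Gauss equation, with the pure blocks of $\mathrm{II}$ removed by \eqref{eq:IIblocks} and the surviving mixed correction $|\mathrm{II}(U,X)|^2$ identified with $|\beta(X)|^2$ through \eqref{eq:boundarybeta}. Your added checks on the $g$-orthogonality of $U$ and $X$, and on why $f$ depends only on $b$ and $h$ only on the base point, just spell out the isotropy-weight reasoning the paper places immediately before the statement.
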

\begin{proof}
Apply the standard warped-product curvature formulas to \eqref{eq:boundarywarped}, and then the Gauss equation \cite{Petersen}. The pure blocks of the second fundamental form vanish by \eqref{eq:IIblocks}. The remaining Gauss correction on a mixed plane is $+|\mathrm{II}(U,X)|^2$, which is \eqref{eq:boundarybeta}.
\end{proof}

\begin{corollary}\label{cor:boundaryobstruction}
If $g$ is invariant under \eqref{eq:SO3productaction} and $\Sec_g>0$, then
\begin{equation}\label{eq:boundarynecessary}
 K_h>0,\qquad |df|_h<1,\qquad
 \mathcal S:=\beta^*\beta-f^{-1}\Hess_hf>0.
\end{equation}
The form $\beta_b:T_bB\to\C$ is a real-linear isomorphism at every minimum of $f$, and at every degeneracy point it satisfies
\[
 \Hess_hf(X,X)<0\quad\text{for all }0\ne X\in\ker\beta_b.
\]
In particular no such positive metric has constant singular-orbit radius, or has $\operatorname{rank}_{\R}\beta\le1$ everywhere. In addition,
\begin{equation}\label{eq:boundaryenergy}
 \int_B|\beta|_h^2\,dA_h>
 \int_B|d\log f|_h^2\,dA_h.
\end{equation}
If $\Sec_g\ge\kappa>0$, the right-hand side can be increased by $2\kappa\operatorname{Area}(B,h)$, with $\ge$ in place of $>$.
\end{corollary}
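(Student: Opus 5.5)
We read off every item in the corollary from Proposition~\ref{prop:boundaryidentity}, with $\Sec_g>0$ imposed on the three plane types.

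\emph{Pointwise inequalities.} Equation \eqref{eq:boundarybase} gives $K_h>0$ immediately. Equation \eqref{eq:boundaryvertical} gives $1-|df|_h^2>0$, that is, $|df|_h<1$. Equation \eqref{eq:boundarymixed} gives $|\beta(X)|^2-f^{-1}\Hess_hf(X,X)>0$ for every unit $X$. This is the quadratic form $\mathcal S(X,X)$, where $\beta^*\beta$ denotes the real quadratic form $X\mapsto|\beta(X)|^2$. Hence $\mathcal S>0$.

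\emph{Pointwise consequences of $\mathcal S>0$.} At a minimum of $f$ the Hessian is nonnegative, so $\mathcal S>0$ forces $|\beta(X)|^2>0$ for every $X\neq0$. Since $\dim T_bB=2=\dim_{\R}\C$, $\beta_b$ is then a real-linear isomorphism. At any point, if $0\ne X\in\ker\beta_b$, then $\mathcal S(X,X)>0$ reads $\Hess_hf(X,X)<0$. If $f$ were constant, the Hessian would vanish identically, and $\mathcal S>0$ would force $\beta$ to be injective everywhere. The real step, however, is compactness: $f$ attains a minimum, at which $\beta$ is an isomorphism. This already rules out $\operatorname{rank}_{\R}\beta\le1$ everywhere, and constant radius is handled separately as follows. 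With $\Hess f=0$, a constant $f$ requires $\beta$ to be injective on all of $B=S^2$. I would then obtain a contradiction topologically: an injective $\beta$ identifies $TB$ with the trivialized line bundle in which $\beta$ takes values. One must check that $\beta$ is a section of $T^*B\otimes\nu$-type data whose target bundle has Euler number different from that of $TS^2$. That would give $e(TS^2)=\pm e(\text{target})$, which is impossible. I expect this identification of the twisting of $\beta$ (via the frames along $\{e_3\}\times B$ and the normal Euler class $2a$) to be the main subtle point. The fallback is to note that the integral inequality below also fails when $f$ is constant, because the right side is zero while injectivity alone is not contradictory. So the topological twist is genuinely needed.

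\emph{Integral inequality.} Take the $h$-trace of $\mathcal S>0$ to get $|\beta|_h^2>f^{-1}\Delta_hf$, with $\Delta$ the positive-trace Laplacian. Then use
\[
f^{-1}\Delta f=\Delta\log f+|d\log f|^2
\]
and integrate over closed $B$. The $\Delta\log f$ term vanishes by the divergence theorem, which yields \eqref{eq:boundaryenergy}. If $\Sec_g\ge\kappa$, the mixed identity gives $\mathcal S\ge\kappa h$, so the trace exceeds the previous bound by $2\kappa$. Integrating adds $2\kappa\operatorname{Area}(B,h)$, with a non-strict inequality.
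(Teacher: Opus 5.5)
Your proposal is correct and follows the paper's proof. You read the three inequalities off Proposition~\ref{prop:boundaryidentity}, get injectivity of $\beta$ at a minimum of $f$, rule out constant $f$ topologically, and trace and integrate using $f^{-1}\Delta f=\Delta\log f+|d\log f|^2$, which is the paper's identity $\int_B f^{-1}\Delta_h f\,dA_h=\int_B f^{-2}|df|_h^2\,dA_h$.

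The step you flag as subtle is already settled by how $\beta$ is defined. The unit frames along $\{e_3\}\times B$ are global, since $\nu F\cong\operatorname{pr}_1^*TS^2$ has Euler class $2a$, which restricts to zero there. So $\beta\in\Omega^1(B;\C)$ takes values in the trivial bundle, and a nowhere-degenerate $\beta$ would trivialize $TS^2$, contradicting $e(TS^2)=2$.
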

\begin{proof}
The pointwise inequalities follow directly from Proposition~\ref{prop:boundaryidentity}. At a minimum the Hessian is nonnegative, so $\mathcal S>0$ forces injectivity of $\beta$. A globally injective $\beta:TS^2\to S^2\times\C$ would trivialize $TS^2$, contradicting its Euler number two. Hence $\beta$ degenerates somewhere, ruling out constant $f$. Tracing $\mathcal S>0$ and integrating gives \eqref{eq:boundaryenergy}, since
\[
 \int_B f^{-1}\Delta_h f\,dA_h=
 \int_B f^{-2}|df|_h^2\,dA_h.
\]
The quantitative assertion follows by tracing the lower bound $\mathcal S\ge\kappa h$.
\end{proof}

Even the unit round product metric has a nonzero mixed second fundamental form on $F$. Its induced metric is $(1+t^2)g_{S^2(1)}+g_{B(1)}$. At $t=0$, $x=e_3$, take the orbit tangent $U=(e_1,0,0)$ and the base tangent $X=(0,e_3,0)$ corresponding to increasing $t$. Differentiation of \eqref{eq:SO3productquotient} gives $\mathrm{II}(U,X)=(0,e_1,0)\ne0$. In \eqref{eq:boundarymixed}, the Hessian term $-1$ and the normal term $+1$ cancel. Discarding the latter would give an incorrect curvature calculation.

\subsection{Compatibility of the boundary inequalities}
\begin{proposition}\label{prop:boundarycompatible}
There are smooth data $(h,f,\beta)$ on $B=S^2$ satisfying all the inequalities in \eqref{eq:boundarynecessary}. They occur as the induced metric and mixed second fundamental form along $F$ for a smooth invariant metric on $M$. This assertion does not give positive curvature on all ambient planes.
\end{proposition}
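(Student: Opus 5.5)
The plan has two parts. First, write down explicit data on $B=S^2$ that satisfy \eqref{eq:boundarynecessary}. Second, build an invariant metric on $M$ near $F$ that induces exactly these data, and extend it to all of $M$ by a partition of unity.

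\textbf{Step 1: the data.} Take $h$ round of curvature one, so $K_h>0$. Take $f=\varepsilon(2+\phi)$ with $\phi$ a first spherical harmonic and $\varepsilon$ small, so that $|df|_h<1$. Then $\Hess_h f=-\varepsilon\phi\, h$, and
\[
\mathcal S=\beta^*\beta+\frac{\phi}{2+\phi}\,h .
\]
We need $\beta^*\beta$ to dominate $\frac{-\phi}{2+\phi}h$ wherever $\phi<0$, and to be positive semidefinite elsewhere. Since $\beta$ is a complex-valued one-form, $\beta^*\beta$ can be positive definite only where $\beta$ is a real isomorphism. Corollary~\ref{cor:boundaryobstruction} says $\beta$ must degenerate somewhere, and it permits this only where the Hessian is negative on $\ker\beta$, that is, where $\phi>0$.

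The natural choice is $\beta=\psi\,\partial\phi$-type data: the $(1,0)$-part of $d\phi$ for a complex structure on $S^2$, multiplied by a cutoff. More simply, use $\beta=\lambda\,\bar\partial \bar z$ in a chart, i.e. a form that is an isomorphism on the southern closed hemisphere $\{\phi\le \delta\}$ and vanishes only near the north pole, where $\phi\approx 1$. Topologically this is consistent with the Euler-number obstruction, since a degenerate point exists. Scaling $\beta$ by a large constant makes $\beta^*\beta>\frac{-\phi}{2+\phi}h$ on the region $\{\phi\le\delta\}$. Where $\phi>\delta$, the term $\frac{\phi}{2+\phi}h$ is already positive definite, so the inequality holds there for any $\beta$.

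One caveat: $\beta$ is only defined up to a unit complex function, via the trivializing frames along the section $\{e_3\}\times B$. It is therefore genuinely a form with values in the trivial bundle, and any smooth $\beta$ on $S^2$ is admissible.

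\textbf{Step 2: realizing the data by an invariant metric.} Work in the tube $G\times_T D(\C_1)\to F$. On $F$, prescribe the warped metric \eqref{eq:boundarywarped} together with a $T$-invariant fiber metric on $\nu F$.

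To realize $\mathrm{II}$, use the standard fact that for a submanifold with a chosen normal bundle, a metric $g=g_F+g_\nu+2\,\sigma$ to first order in the normal coordinate prescribes the second fundamental form arbitrarily. Concretely, set
\[
g=\pi^*(g|_F)+|dv+\text{conn}|^2-2\langle \mathrm{II}(\cdot,\cdot),v\rangle+O(|v|^2)
\]
in normal coordinates $v$.

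The prescribed $\mathrm{II}$ has only the mixed $\mathcal V\otimes TB\to\nu$ block. Because the weights are $1\otimes 0\to 1$, this block is $T$-equivariant. Define it on the section $\{e_3\}\times B$ from $\beta$, and extend it by $G$-equivariance, which is legitimate since the slice representation matches. Average over $T$ in the fiber construction to keep invariance.

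Then glue this tube metric to the round product metric on the complement using a $G$-invariant cutoff in the normal radius. Positive-definiteness holds for a small enough tube, because the first-order term is small.

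\textbf{Main obstacle.} The delicate point is smoothness and equivariance of the linear-in-$v$ correction at the orbit level. The mixed block must be well-defined on all of $F$, not only on the section. This requires checking that the $T$-weight bookkeeping in \eqref{eq:IIblocks} lets the frame-dependent $\beta$ assemble into a global $G$-invariant tensor.

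I would verify this by writing the tensor as an element of $\operatorname{Hom}_T(\C_1\otimes\R^2,\C_1)$ pulled back via $G\times_T$. The remaining steps are routine.
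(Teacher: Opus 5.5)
Your proposal is correct and follows essentially the paper's route. You choose data on the round sphere with $f$ affine in a first spherical harmonic and $\beta$ degenerating only where $\Hess_h f<0$, then realize them by the tube metric $g_F-2\langle\mathrm{II}(\cdot,\cdot),n\rangle$ with Euclidean fibers, horizontal--vertical orthogonality, and an invariant cutoff. The one difference is that the paper takes the explicit global form $\beta=\sqrt\eps\bigl((1+r)d\zeta-\zeta\,dr\bigr)$, for which $\beta^*\beta=\eps(1+r)^2h$ makes $\mathcal S$ an explicit positive multiple of $h$, whereas your cutoff-and-rescale choice of $\beta$ obtains the same inequality by a compactness argument.
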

\begin{proof}
Write $b=(b_1,b_2,r)\in S^2$, $\zeta=b_1+ib_2$, and choose $0<\eps\le\tfrac12$. Set
\begin{equation}\label{eq:boundarydata}
 h=g_{S^2(1)},\qquad f=1-\eps r,\qquad
 \beta=\sqrt\eps\bigl((1+r)d\zeta-\zeta\,dr\bigr).
\end{equation}
These are globally smooth. Direct calculation gives
\[
 \beta^*\beta=\eps(1+r)^2h,
 \qquad \Hess_hf=\eps r h,
 \qquad |df|_h\le\eps<1.
\]
Thus
\[
 \mathcal S=\eps\left((1+r)^2-\frac{r}{1-\eps r}\right)h>0.
\]
For $r<0$ both contributions are nonnegative and their sum is positive. For $0\le r\le1$, the expression in parentheses is at least $(1+r)^2-2r=1+r^2$. Also $K_h=1$. The degeneracy of $\beta$ is at the maximum of $f$, not at its minimum.

To realize these tensors, equip $E=\operatorname{pr}_1^*TS^2\to F$ with its standard fiber metric and an invariant connection. Prescribe $g_F=f^2g_{S^2}+h$ and let $\mathrm{II}$ have the pure blocks \eqref{eq:IIblocks} and, for an orbit coordinate vector $u\in T_xS^2$, mixed block
\[
 \mathrm{II}(u,X)=f\bigl(\operatorname{Re}\beta(X)u+
               \operatorname{Im}\beta(X)J_xu\bigr)\in E_{(x,b)}.
\]
In an equivariant tubular neighborhood, set for small $n\in E_p$
\[
 g_n(A^h,B^h)=g_F(A,B)-2\langle\mathrm{II}(A,B),n\rangle,
\]
use the Euclidean metric in the fiber directions, and make horizontal and vertical directions orthogonal. Compactness gives a uniform neighborhood on which this tensor is positive definite. Its normal derivative at the zero section realizes the prescribed second fundamental form. An invariant cutoff and a convex interpolation with any invariant metric extend it to all of $M$ without changing it near $F$.

The planes in Proposition~\ref{prop:boundaryidentity} consequently have positive curvature along $F$. Curvatures of planes containing normal vectors, general oblique planes, and planes in the regular part have not been controlled. This is exactly the limitation of the construction.
\end{proof}

\subsection{A first-order restriction without symmetry}
\begin{proposition}\label{prop:firstvariation}
Let $g_s$ be a smooth family of metrics on $S^2\times S^4$ with $g_0$ the round product metric. If the first variation of sectional curvature is nonnegative on every initially mixed plane, it vanishes on every such plane. In particular this vanishing is necessary if $\Sec_{g_s}>0$ for every sufficiently small $s>0$.
\end{proposition}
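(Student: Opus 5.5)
The approach is the classical first-order (Strake-type) averaging argument. Write $h=\frac{d}{ds}\big|_{s=0}g_s$ and $R'=\frac{d}{ds}\big|_{s=0}R_{g_s}$. Fix a $g_0$-orthonormal pair $X\in T S^2$, $Y\in TS^4$ at a point. Since $K_{g_0}(X,Y)=0$, the derivative of the quotient
\[
K_{g_s}(X,Y)=\frac{R_{g_s}(X,Y,Y,X)}{g_s(X,X)g_s(Y,Y)-g_s(X,Y)^2}
\]
at $s=0$ is just $R'(X,Y,Y,X)$; the denominator does not contribute. Hence it suffices to show that the pointwise sum
\[
\Phi:=\sum_{i=1}^2\sum_{j=1}^4 R'(e_i,f_j,f_j,e_i)
\]
over $g_0$-orthonormal frames $e_i$ of $TS^2$ and $f_j$ of $TS^4$ has total integral zero with respect to $dV_{g_0}$. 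Granting this, nonnegativity on all mixed planes makes $\Phi\ge0$ with $\int\Phi=0$, so $\Phi\equiv0$. Then every summand vanishes. Every mixed plane is spanned by the first vectors of some such pair of frames, so the first variation vanishes on every mixed plane. The ``in particular'' clause follows because, for a fixed initially mixed plane $\sigma$, $K_{g_s}(\sigma)>0=K_{g_0}(\sigma)$ for small $s>0$ forces the one-sided derivative to be $\ge0$.

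To compute $\Phi$, I would use the standard variation formula for the $(0,4)$ curvature tensor with respect to $\nabla=\nabla^{g_0}$:
\[
2R'(X,Y,Y,X)=2(\nabla^2_{X,Y}h)(X,Y)-(\nabla^2_{X,X}h)(Y,Y)-(\nabla^2_{Y,Y}h)(X,X)+h(R(X,Y)Y,X)+h(X,R(X,Y)Y).
\]
The ordering and symmetrization of the mixed second-derivative term must be fixed carefully. For the round product metric, $R(X,Y)=0$ whenever $X,Y$ are tangent to different factors, so the zero-order curvature terms drop out. The product splitting $TM=TS^2\oplus TS^4$ is $\nabla$-parallel, and the trace operators $\sum_i e_i\otimes e_i$ and $\sum_j f_j\otimes f_j$ are parallel. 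Consequently $\sum_{i,j}(\nabla^2_{e_i,e_i}h)(f_j,f_j)$ is the divergence of the vector field $Z$ defined by $g_0(Z,W)=\sum_j(\nabla_{P_1W}h)(f_j,f_j)$, where $P_1$ is the projection onto $TS^2$. The term with the two factors exchanged is handled the same way.

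The mixed term $\sum_{i,j}(\nabla^2_{e_i,f_j}h)(e_i,f_j)$ is also a divergence, of the field $W\mapsto\sum_{i,j}(\nabla_{f_j}h)(P_1W,f_j)$ or of the analogous field with the roles reversed. The order of differentiation matters only up to a Ricci-commutator term. That term is a contraction of $R$ with $h$ in which one slot of $R$ is horizontal and one vertical, so it vanishes for the product metric. The divergence theorem on the closed manifold then gives $\int\Phi\,dV_{g_0}=0$.

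I expect the main obstacle to be the bookkeeping in the commutator and divergence step: checking that every curvature contraction that appears has mixed arguments, and so vanishes by the product structure. I will verify this first in local product coordinates before writing the invariant argument. No symmetry or invariance of $g_s$ is used, which matches the statement.
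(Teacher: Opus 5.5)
Your proof is correct, but it follows a different route from the paper's.

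\textbf{The paper's route.} The paper never linearizes the curvature tensor. It uses the fact that a product $T$ of a great circle in $S^2$ with a great circle in $S^4$ is a compact flat totally geodesic torus for $g_0$. Its second fundamental form for $g_s$ is $O(s)$, so the Gauss correction is $O(s^2)$. Hence the first variation of the ambient sectional curvature on $T_pT$ equals the first variation of the intrinsic Gaussian curvature of $(T,g_s|_T)$. Differentiating Gauss--Bonnet on $T$, and using that the initial curvature of $T$ is zero, shows that this first variation integrates to zero over $T$ against $dA_{g_0}$. Since every mixed plane is tangent to such a torus, nonnegativity forces vanishing.

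\textbf{Your route.} You integrate the partial trace $\Phi$ over all of $M$ and show that it is a divergence. Your bookkeeping checks out:
\begin{enumerate}
\item Because $P_1$, $P_2$ and the partial traces are parallel, both orderings of the mixed second-derivative term are already divergences, so the commutator is not even needed.
\item Every zero-order term has one argument in each factor, so it vanishes because $R(e_i,f_j)=0$ for the product metric.
\end{enumerate}
One small correction: the one-form for the mixed term should carry only the sum over $j$, namely $W\mapsto\sum_j(\nabla_{f_j}h)(P_1W,f_j)$.

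\textbf{Comparison.} The paper's argument is shorter and avoids the variation formula entirely. It is also more local: nonnegativity on the tangent planes of a single torus already forces vanishing along that torus. However, it depends on the special fact that geodesics of the round factors close up. Your argument costs the linearized-curvature computation but uses no closed geodesics. It therefore proves the same statement for any Riemannian product of closed manifolds, since all it uses is that mixed curvature terms vanish and the splitting is parallel. On the other hand, it only controls the pointwise sum $\Phi$. So it genuinely needs the hypothesis on all mixed planes at every point before it can conclude that each individual plane has zero first variation.
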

\begin{proof}
The product of a great circle in each sphere is a flat totally geodesic torus $T$ for $g_0$. Its second fundamental form for $g_s$ is $O(s)$, so its quadratic Gauss correction is $O(s^2)$. The first variation of its intrinsic Gaussian curvature therefore equals that of the ambient sectional curvature on $TT$. Differentiating Gauss--Bonnet gives
\[
 \int_T\left.\frac{d}{ds}\right|_{0}K_{g_s}(T_pT)\,dA_{g_0}=0,
\]
since the initial curvature is zero. A nonnegative integrand must vanish identically. Every mixed plane is tangent to a torus of this form. Positivity for $s>0$ implies nonnegativity of the right derivative at zero, proving the last assertion.
\end{proof}

This rules out a strictly positive first-order mixed-curvature direction, not a higher-order perturbation. The distinction is relevant to the recent perturbative construction reported in \cite{BH}, although its manifold and prescribed symmetry are different.

\subsection{The unresolved metric questions}
For both diagonal actions, the quotient and the underlying smooth manifold are now explicit. The arguments above exclude product and warped-product constructions, diagonal Cheeger deformations and their conformal classes, both totally geodesic product fibrations, and certain natural enlarged symmetry groups. The $\SO(3)$ action has the additional singular-orbit-radius obstruction of Corollary~\ref{cor:boundaryobstruction}.

None of these hypotheses is forced by the prescribed rank-one symmetry. The full metric space is \eqref{eq:allmetrics}, and Proposition~\ref{prop:boundarycompatible} shows that even the boundary restrictions have nonconstant compatible data. Thus the existence of an arbitrary positively curved invariant metric for either action is not decided by this paper. The smooth candidate classifications in Sections~\ref{sec:so3s3}--\ref{sec:su2s3} and the restricted metric obstructions in this section are logically separate conclusions.

\end{document}